\newtheorem{theorem}{Theorem}[section]
\newtheorem{lemma}[theorem]{Lemma}
\newtheorem{proposition}[theorem]{Proposition}
\newtheorem{corollary}[theorem]{Corollary}
\newtheorem{example}[theorem]{Example}
\newtheorem{definition}[theorem]{Definition}
\theoremstyle{definition}
\newcommand\tr{ \mbox{Tr} }
\newcommand\leac{\ll_{\rm ac}}
\newcommand\ce{\simeq_{\rm cl} }
\begin{document}

\title{On Operator Valued Measures }

\author[Darian McLaren]{Darian McLaren\textsuperscript{1}}

\author[Sarah Plosker]{Sarah Plosker\textsuperscript{1,2,3}}

\author[Christopher Ramsey]{Christopher Ramsey\textsuperscript{1,2}}

\thanks{\textsuperscript{1}Department of Mathematics and Computer Science, Brandon University,
Brandon, MB R7A 6A9, Canada}
\thanks{\textsuperscript{2}Department of Mathematics, University of Manitoba, Winnipeg, MB  R3T 2N2, Canada}
\thanks{\textsuperscript{3}ploskers@brandonu.ca}
\keywords{operator valued measure, quantum probability measure, atomic and nonatomic measures}
\subjclass[2010]{46B22, 46G10, 47G10, 81P15}

%%%%%%%%%%%%%%%%%%%%%%%%%%%%%%%%%%%%

\maketitle

\begin{abstract} We consider positive operator valued measures whose image is the bounded operators acting on an infinite-dimensional Hilbert space, and we relax,  when possible,
    the usual assumption of positivity of the operator valued measure seen in the quantum information theory literature. We  define the Radon-Nikod\'ym derivative of a positive operator valued measure with respect to a complex measure induced by a given quantum state; this derivative does not always exist when the Hilbert space is infinite dimensional in so much as its range may include unbounded operators. We define integrability of a positive quantum random variable with respect to a positive operator valued measure. Emphasis is put on the structure of   operator valued measures, and  we develop positive operator valued versions of % a number of classical measure decomposition theorems: the Hahn-Jordan decomposition, 
   the Lebesque decomposition theorem and Johnson's   atomic and nonatomic decomposition theorem. %, as well as a positive operator valued version of the classical Lyapunov theorem which states that the range of an $n$-tuple of finite positive atomic measures is closed and convex.  
   Beyond these generalizations, we make connections between absolute continuity and the ``cleanness'' relation defined on positive operator valued measures as well as to the notion of atomic and nonatomic measures. 
    
\end{abstract}

\section{Introduction}

In (classical) measure theory,  $X$ is a set and $\Sigma$ is a $\sigma$-algebra over $X$; the pair $(X,\Sigma)$ then forms a measurable space. Classical measure theory includes well-known decomposition theorems such as the Hahn decomposition and the Jordan decomposition (often jointly referred to as the Hahn-Jordan decomposition), the Lebesgue decomposition, and Johnson's decomposition of a measure into atomic and nonatomic parts. %, as well as Lyapunov's convexity theorem.  
While an operator valued analogue of the the Hahn-Jordan decomposition exists in the literature (see Section \ref{sec:HJ} for details), operator valued analogues of the remaining aforementioned classical results have not previously been considered.

Much work has been done recently to build up the mathematical foundations of a positive operator valued measure (POVM) (see \cites{clean2005, chiribella--etal2007, chiribella--etal2010, dariano--etal2005,   FFP, farenick--kozdron2012, FPS, jencova--pulmannov2009, heinonen2005, kahn2007, parthasarathy1999}, among others, as well as \cite{DL, Ozawa} for more classical treatments).  Depending on the mathematical analysis, the underlying set $X$ or Hilbert space $\mathcal{H}$ (or both) may be infinite or finite dimensional. Here, we keep our analysis fairly general by considering both $X$ and $\mathcal{H}$ to be infinite dimensional, although we assume $X$ is locally compact and Hausdorff as these assumptions afford us some structure to work with. Furthermore, whenever possible, we drop the assumption of positivity which appears in the quantum information theory literature.

Throughout,  $X$ is a locally compact Hausdorff space and  $\mathcal{O}(X)$   is the $\sigma$-algebra of Borel sets of $X$. For the following, recall that the predual of $\mathcal{B}(\mathcal H)$ is the ideal of trace class operators $\mathcal T(\mathcal H) = \mathcal{B}(\mathcal H)_*$. Of course, if $\mathcal H$ is finite-dimensional then $\mathcal T(\mathcal H) = \mathcal{B}(\mathcal H)$. We denote by $S(\mathcal H)\subset \mathcal T(\mathcal H)$ the convex set of positive operators of unit trace (such operators are called \emph{states} or \emph{density operators} and typically denoted by $\rho$). 

Following \cite{Larson et al, Paulsen} we have the following definition.

\begin{definition}
A map $\nu : \mathcal{O}(X) \to \mathcal{B}(\mathcal{H})$ is an \emph{operator-valued measure (OVM)} if it is weakly countably additive, meaning that for every countable collection $\{E_k\}_{k \in \mathbb N} \subseteq \mathcal{O}(X)$ with $E_j \cap E_k = \emptyset$ for $j \neq k$ we have
\[
\nu\left(\bigcup_{k\in \mathbb N} E_k \right) = \sum_{k \in \mathbb N} \nu(E_k)\,,
\]
where the convergence on the right side of the equation above is with respect to the ultraweak topology of $\mathcal{B}(\mathcal{H})$. We say $\nu$ is 
\begin{enumerate}[(i)]
    \item \emph{bounded} if $\sup\{\|\nu(E)\| : E\in \mathcal O(X)\} < \infty$,
    \item \emph{self-adjoint} if $\nu(E)^* = \nu(E)$, for all $E\in\mathcal O(X)$,
    \item \emph{positive} if $\nu(E) \in \mathcal{B}(\mathcal H)_+$, for all $E\in \mathcal O(X)$,
    \item \emph{spectral} if $\nu(E_1 \cap E_2) = \nu(E_1)\nu(E_2)$, for all $E_1,E_2\in \mathcal O(X)$
    \item \emph{regular} if the induced complex measure $\tr(\rho\nu(\cdot))$ is regular for every $\rho\in \mathcal T(\mathcal H)$.
\end{enumerate}
Moreover, $\nu$ is called a \emph{positive operator-valued probability measure} or \emph{quantum probability measure} if it is positive and $\nu(X) = I_{\mathcal{H}}$, and is called a \emph{projection-valued measure (PVM)} if it is self-adjoint and spectral.
\end{definition}

Note that it is automatic that a positive operator-valued measure (POVM) is bounded.

POVMs and quantum probability measures arose due to their interest in quantum information theory; see  \cite{Busch--Lahti--Mittelstaedt-book,Davies-book,Holevo-book2} as general references on this topic. OVMs in various forms have also been studied for quite a while but usually in quite a general way, often in conjunction with vector-valued measures \cite{Larson et al, Roth, VOVM book}. In many of these sources the convergence for the countable additivity of the OVM was assumed to be under the strong or weak operator topology, though if one is working with a bounded OVM then the ultraweak and weak topologies correspond.

A measurable set is \emph{$\sigma$-finite} if it can be expressed as the countable union of measurable sets with finite measure. We say that the measure $\mu$ or OVM $\nu$  is $\sigma$-finite if every measurable set is $\sigma$-finite; equivalently, a measure is $\sigma$-finite if $X$ is $\sigma$-finite. Note here again, that a POVM is bounded and thus is finite, not just $\sigma$-finite.

For $i=1,2$, let  $\vartheta_i:\mathcal{O}(X)\rightarrow \mathcal A_i$, where $\mathcal A_i$ is either $\mathcal{B}(\mathcal{H}_i)$  or the extended real number line, we say that $\vartheta_2$ is \emph{absolutely continuous} with respect to $\vartheta_1$ (denoted
$\vartheta_2 \ll_{\rm ac} \vartheta_1$) if $\vartheta_2(E)=0$ for all $E\in\mathcal{O}(X)$ with $\vartheta_1(E)=0$ (where $0$ is either the scalar zero or the zero operator, as appropriate). Note that a  measure $\mu_2$ can be absolutely continuous with respect to another  measure $\mu_1$ or with respect to an OVM $\nu$, and similarly an OVM $\nu_2$ can be absolutely continuous with respect to another OVM $\nu_1$ or with respect to a  measure $\mu$. 

A signed (classical) measure is an extended real-valued function on $\Sigma$ that is countably additive, assumes only one of the values $-\infty$ or $\infty$, and maps $\emptyset$ to 0.

In Section \ref{sec:integrals} we extend some recent results on POVMs on finite-dimensional Hilbert spaces to infinite-dimensional Hilbert spaces and relax the positivity assumption when possible. We define and develop a stronger variant of the integrability of a quantum random variable with respect to a POVM $\nu$ than what is found in the literature. In Section \ref{sec:decomp} we consider the structure of operator valued measures and  prove operator valued analogues of several well-known classical measure theory decomposition theorems. In Section \ref{sec:cleanIC} we prove some results related to the notion of informationaly complete quantum probability measures, and link the notion of a measurement basis and the partial order of cleanness with the property of atomic/non-atomic.  These are entirely quantum results, in that we do not see the concepts of informationally complete, measurement basis, and cleanness in classical measure theory.  %In Section \ref{sec:Lyapunov} we provide a proof of a Lyapunov theorem (the range of finite positive non-atomic classical measures is a closed convex set) for POVMS.

\section{Integrals of quantum random variables}\label{sec:integrals}

The main goal of this section is to extend the results of \cite{FPS} to the case of $\mathcal H$ being infinite-dimensional. In particular, $\mathcal H$ will always be separable.

A {\em quantum random variable} $f: X \rightarrow \mathcal{B}(\mathcal H)$ is a Borel measurable function between the $\sigma$-algebras generated by the open sets of $X$ and $\mathcal{B}(\mathcal H)$, respectively. 

Equivalently, $f$ is a quantum random variable if and only if 
\[
x\to \tr(\rho f(x))
\]
are Borel measurable functions for every state $\rho \in S(\mathcal H)$.

A quantum random variable $f: X \rightarrow \mathcal{B}(\mathcal H)$ is said to be 
\begin{enumerate}[(i)]
    \item \emph{bounded} if $\sup\{\|f(x)\| : x\in X\} < \infty$,
        \item \emph{normal} if $f(x)f(x)^* = f(x)^*f(x)$, for all $x\in X$,
    \item \emph{self-adjoint} if $f(x) = f(x)^*$, for all $x\in X$,
    \item \emph{positive} if $f(x) \in \mathcal{B}(\mathcal H)_+$, for all $x\in X$.
\end{enumerate}

A self-adjoint (or positive) quantum random variable can really be thought of as being Borel measurable from $X$ to $\mathcal{B}(\mathcal H)_{sa}$ (or $\mathcal{B}(\mathcal H)_+)$, where the range $\sigma$-algebra is still generated from the norm topology. This allows us to compose a quantum random variable with a continuous function to get another quantum random variable.

\begin{lemma}\label{lem:borel}
Let $f: X \rightarrow \mathcal{B}(\mathcal H)$ be a self-adjoint quantum random variable. Then $f_+, f_- : X \rightarrow \mathcal{B}(\mathcal H)_+$ defined by 
\[
f_+(x) = f(x)_+ \ \ \textrm{and} \ \ f_-(x) = f(x)_-, \ \ x\in X
\]
are positive quantum random variables. Similarly, if $f$ is a positive quantum random variable then $f^{1/2} : X \rightarrow \mathcal{B}(\mathcal H)_+$ defined by
\[
f^{1/2}(x) = f(x)^{1/2}
\]
is a positive quantum random variable.
\end{lemma}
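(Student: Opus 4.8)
The plan is to reduce everything to the scalar case via the characterization of quantum random variables already stated in the excerpt: $f$ is a quantum random variable if and only if $x \mapsto \tr(\rho f(x))$ is Borel measurable for every state $\rho$. So to show $f_+$ is a quantum random variable, it suffices to show $x \mapsto \tr(\rho f_+(x))$ is Borel measurable for each $\rho \in S(\mathcal H)$. The obstacle is that the map $A \mapsto A_+$ on $\mathcal{B}(\mathcal H)_{sa}$ is \emph{not} linear, so one cannot simply pass $\rho$ through it; we need to know that $A \mapsto A_+$ is Borel (indeed norm-continuous) as a map $\mathcal{B}(\mathcal H)_{sa} \to \mathcal{B}(\mathcal H)_+$.

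First I would establish the continuity of the relevant functional calculus maps. The function $t \mapsto t_+ = \max(t,0)$ is continuous on $\mathbb{R}$, and by the standard estimate for continuous functional calculus of self-adjoint operators (e.g.\ $\|g(A) - g(B)\| \le \omega_g(\|A-B\|)$ for uniformly continuous $g$ on a common interval containing the spectra, or more simply $\|A_+ - B_+\| \le \|A - B\|$ since $t \mapsto t_+$ is $1$-Lipschitz), the map $A \mapsto A_+$ is norm-continuous from $\mathcal{B}(\mathcal H)_{sa}$ to $\mathcal{B}(\mathcal H)_+$; similarly $A \mapsto A_-$ and (on $\mathcal{B}(\mathcal H)_+$) $A \mapsto A^{1/2}$ are norm-continuous, the latter because $t \mapsto t^{1/2}$ is uniformly continuous on $[0,\infty)$ (or one uses the operator monotonicity/Lipschitz bound on bounded sets — note $f$ bounded or not, $f^{1/2}$ composes a continuous function with $f$). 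This is exactly the remark preceding the lemma: a self-adjoint (resp.\ positive) quantum random variable is Borel measurable into $\mathcal{B}(\mathcal H)_{sa}$ (resp.\ $\mathcal{B}(\mathcal H)_+$) with the norm Borel structure, so it may be composed with a norm-continuous function.

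Then the proof assembles in two lines each. For $f_+$: it is the composition of the Borel map $f : X \to \mathcal{B}(\mathcal H)_{sa}$ with the continuous (hence Borel) map $(\cdot)_+ : \mathcal{B}(\mathcal H)_{sa} \to \mathcal{B}(\mathcal H)_+$, hence Borel measurable from $X$ into $\mathcal{B}(\mathcal H)_+$; composing once more with the inclusion $\mathcal{B}(\mathcal H)_+ \hookrightarrow \mathcal{B}(\mathcal H)$ shows $f_+$ is a quantum random variable, and it is positive by construction since $f(x)_+ \in \mathcal{B}(\mathcal H)_+$ for every $x$. The argument for $f_-$ is identical using $(\cdot)_-$, and the argument for $f^{1/2}$ is identical using the continuous map $(\cdot)^{1/2} : \mathcal{B}(\mathcal H)_+ \to \mathcal{B}(\mathcal H)_+$.

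The main obstacle, as noted, is purely the justification that the functional-calculus maps are norm-continuous on all of $\mathcal{B}(\mathcal H)_{sa}$ (resp.\ $\mathcal{B}(\mathcal H)_+$), uniformly, without a boundedness hypothesis on $f$ — but this follows because $t \mapsto t_+$, $t \mapsto t_-$ are globally $1$-Lipschitz and $t \mapsto t^{1/2}$ is uniformly continuous on $[0,\infty)$, so the operator inequalities $\|A_+ - B_+\| \le \|A-B\|$ etc.\ hold with no constraint on the norms. Alternatively, if one prefers to avoid quoting a uniform-continuity estimate for the square root, one can argue Borel measurability directly: the spectral-theorem-based functional calculus $A \mapsto g(A)$ is Borel for bounded Borel $g$ on bounded pieces $\{A : \|A\| \le n\}$ and $X = \bigcup_n f^{-1}(\{\|\cdot\| \le n\})$ is a countable Borel partition, so measurability is checked piecewise. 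Either route is routine; I would present the Lipschitz/uniform-continuity version as the cleanest.
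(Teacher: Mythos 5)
Your overall strategy is exactly the paper's: observe that $z\mapsto z_+$, $z\mapsto z_-$ and $z\mapsto z^{1/2}$ are norm-continuous maps on $\mathcal{B}(\mathcal H)_{sa}$ (resp.\ $\mathcal{B}(\mathcal H)_+$), hence Borel, and compose them with the Borel map $f$. The paper simply cites this continuity as a standard fact from functional calculus and stops there, so in structure your argument and the paper's coincide.

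However, one of your justifications of that continuity is genuinely false, and you should not let it stand. The scalar function $t\mapsto t_+$ is $1$-Lipschitz, but it is \emph{not} operator Lipschitz: the inequality $\|A_+-B_+\|\le\|A-B\|$ fails in $\mathcal{B}(\mathcal H)$ for infinite-dimensional $\mathcal H$ (equivalently, $A\mapsto|A|$ is not Lipschitz in operator norm; by a result of Kato one only has $\||A|-|B|\|\le \tfrac{2}{\pi}\|A-B\|\bigl(2+\log\tfrac{\|A\|+\|B\|}{\|A-B\|}\bigr)$, and the logarithm is necessary). For the same reason your more general claim $\|g(A)-g(B)\|\le\omega_g(\|A-B\|)$ for uniformly continuous $g$ is false; the scalar modulus of continuity does not transfer to the functional calculus except for special functions. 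The conclusion you need --- that $A\mapsto A_\pm$ is norm-continuous --- is nevertheless true, and the clean way to get it is the route you already use for the square root: $\|A^{1/2}-B^{1/2}\|\le\|A-B\|^{1/2}$ \emph{does} hold for positive operators (this is where operator monotonicity genuinely helps), $A\mapsto A^2$ is locally Lipschitz, hence $A\mapsto|A|=(A^2)^{1/2}$ is continuous, and $A_\pm=\tfrac12(|A|\pm A)$ (up to sign convention) is continuous. With that repair, the rest of your argument --- composition of a Borel map with a continuous map, plus the trivial observation that the values land in $\mathcal{B}(\mathcal H)_+$ --- is correct and matches the paper's proof.
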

\begin{proof}
It is a standard fact from functional calculus that $\max\{z,0\}, \min\{z,0\}$, and $z^{1/2}$ are continuous functions on $\mathcal{B}(\mathcal H)_{sa}$ and $\mathcal{B}(\mathcal H)_+$, respectively. Thus, they are Borel measurable and so $f_+ = \max\{z,0\}\circ f, f_- = -\min\{z,0\}\circ f$ and $f^{1/2} = z^{1/2}\circ f$ are Borel measurable functions.
\end{proof}

\begin{corollary}\label{cor:four}
Every quantum random variable is the linear combination of four positive quantum random variables.
\end{corollary}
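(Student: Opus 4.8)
The plan is to reduce the statement to the scalar decomposition of a complex number into real and imaginary parts, and then apply Lemma~\ref{lem:borel} to handle positivity. First I would take an arbitrary quantum random variable $f : X \to \mathcal{B}(\mathcal H)$ and write $f = g + i h$, where $g(x) = \tfrac{1}{2}(f(x) + f(x)^*)$ and $h(x) = \tfrac{1}{2i}(f(x) - f(x)^*)$. I need to check that $g$ and $h$ are self-adjoint quantum random variables: each is obtained from $f$ by composing with the (norm-)continuous maps $z \mapsto \tfrac12(z+z^*)$ and $z \mapsto \tfrac{1}{2i}(z - z^*)$ on $\mathcal{B}(\mathcal H)$, so Borel measurability is preserved, exactly as in the proof of Lemma~\ref{lem:borel}. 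Self-adjointness of $g(x)$ and $h(x)$ is a pointwise algebraic fact.

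Next I would apply Lemma~\ref{lem:borel} to $g$ and to $h$ separately, obtaining positive quantum random variables $g_+, g_-, h_+, h_-$ with $g = g_+ - g_-$ and $h = h_+ - h_-$ pointwise. Combining these gives
\[
f = g_+ - g_- + i h_+ - i h_-,
\]
which exhibits $f$ as a linear combination (with coefficients $1, -1, i, -i$) of the four positive quantum random variables $g_+, g_-, h_+, h_-$. That completes the argument.

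I do not anticipate a serious obstacle here; the only point requiring a word of care is that ``linear combination'' is meant pointwise in $\mathcal{B}(\mathcal H)$, and that the pointwise sum of quantum random variables is again a quantum random variable — this follows because addition and scalar multiplication are norm-continuous on $\mathcal{B}(\mathcal H)$, so the relevant compositions stay Borel measurable. Everything else is an immediate consequence of Lemma~\ref{lem:borel} applied to the real and imaginary parts.
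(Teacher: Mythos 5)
Your argument is correct and is exactly the intended one: the paper leaves Corollary~\ref{cor:four} without proof precisely because it follows from Lemma~\ref{lem:borel} via the decomposition $f = \tfrac{1}{2}(f+f^*) + i\,\tfrac{1}{2i}(f-f^*)$ into self-adjoint parts, each then split into positive and negative parts. Your added care about Borel measurability of the real and imaginary parts (composition with the norm-continuous maps $z\mapsto\tfrac12(z+z^*)$ and $z\mapsto\tfrac{1}{2i}(z-z^*)$) is a correct and welcome detail.
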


Let $\nu: \mathcal O(X) \rightarrow \mathcal{B}(\mathcal H)$ be an OVM. For every state $\rho\in S(\mathcal H)$, the induced complex measure $\nu_\rho$ on $X$ is defined by
\[
\nu_\rho(E) = \tr(\rho \nu(E)), \ E\in \mathcal O(X).
\]
Note that because $\nu$ is weakly countably additive it follows that $\nu_\rho$ is countably additive.

If $\nu$ is a POVM and $\rho$ is a full-rank density operator then $\tr(\rho \,\cdot\,)$ maps nonzero positive operators to strictly positive numbers. Therefore, $\nu \ll_{\rm ac} \nu_\rho$ and $\nu_\rho \ll_{\rm ac} \nu$; that is, $\nu$ and $\nu_\rho$ are  mutually absolutely continuous for any full-rank $\rho\in S(\mathcal H)$. 

Assume $\mathcal H$ is separable and let $\{e_n\}$ be an orthonormal basis. Denote $\nu_{ij}$ the complex measure $\nu_{ij}(E) = \langle \nu(E)e_j,e_i\rangle, E\in \mathcal O(X)$. Now, for any full-rank density operator $\rho$ we have $\nu_{ij} \ll_{\rm ac} \nu_\rho$. Thus, by the classical Radon-Nikod\'ym theorem, there is a unique $\frac{d\nu_{ij}}{d\nu_\rho} \in L_1(X, \nu_\rho)$ such that
\[
\nu_{ij}(E) = \int_E \frac{d\nu_{ij}}{d\nu_\rho} d\nu_\rho, \ E\in \mathcal O(X).
\]

\begin{definition} Let $\nu:\mathcal{O}(X)\rightarrow \mathcal B(\mathcal H)$ be a POVM and $\rho\in S(\mathcal{H})$ be a full-rank density operator. 
The {\em Radon-Nikod\'ym derivative} of $\nu$ with respect to $\nu_\rho$ at the point $x\in X$ is defined implicitly by
\[
\left\langle \frac{d\nu}{d\nu_\rho}(x) e_j, e_i\right\rangle =  \frac{d\nu_{ij}}{d\nu_\rho}(x)
\]
which only is worth studying for us if it exists as a quantum random variable, meaning it takes every $x$ to a bounded operator.
\end{definition}

If $\nu$ is into a finite-dimensional Hilbert space then $\frac{d\nu}{d\nu_\rho}$ always exists. However, for infinite dimensions this derivative may not always exist. 
By Corollary \ref{cor:exists}, if $\frac{d\nu}{d\nu_\rho}$ exists for some full-rank density operator $\rho\in S(\mathcal H)$, then it exists for all full-rank density operators in $S(\mathcal H)$. Thus, we will   not specify a particular full-rank $\rho$ unless it is necessary to do so. 

\begin{example}  
Let $\mu$ be the Lebesgue measure on $[0,1]$ and define
\[
\nu(E) = {\rm diag}(\mu(E\cap[1/2, 1]), \mu(E\cap[1/3, 1/2]), \mu(E\cap[1/4,1/3]), \dots)
\]
which gives that $\nu : \mathcal O(X) \rightarrow \mathcal B(\mathcal H)$ is a POVM. Let $\rho = {\rm diag}(1/2, 1/4, \dots)$ be a full-rank density operator. Thus
\[
\nu_\rho(E) = \sum_{n\geq 1} \frac{1}{2^n}\mu(E \cap [1/(n+1), 1/n])
\]
and 
\[
\frac{d\nu}{d\nu_\rho} = {\rm diag}(2\chi_{[1/2,1]}, 4\chi_{[1/3,1/2]}, 8\chi_{[1/4,1/3]}, \dots)
\]
which is clearly not a quantum random variable. Therefore, there exists $\nu$ and $\rho$ such that $\frac{d\nu}{d\nu_\rho}$ does not exist.
\end{example}

\begin{proposition}
Let $\nu$ be a POVM such that  $\frac{d\nu}{d\nu_\rho}$ exists.  Then  $\frac{d\nu}{d\nu_\rho}$ 
is positive almost everywhere with respect to $\nu_\rho$.
\end{proposition}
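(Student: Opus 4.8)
The plan is to exploit the positivity of $\nu$ together with the linearity of the classical Radon-Nikod\'ym derivative, and then to handle the $\xi$-dependence of the resulting null sets by a standard separability argument. Fix the full-rank $\rho$ throughout, and recall that $\nu \ll_{\rm ac} \nu_\rho$ for such $\rho$.

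First, I would test against a fixed vector. Let $\xi = \sum_n c_n e_n$ with only finitely many $c_n$ nonzero. Then $E \mapsto \langle \nu(E)\xi,\xi\rangle = \sum_{i,j} \overline{c_i}\,c_j\, \nu_{ij}(E)$ is, since $\nu$ is positive, a finite positive measure on $\mathcal O(X)$, and since $\nu \ll_{\rm ac} \nu_\rho$ it is absolutely continuous with respect to $\nu_\rho$; hence its classical Radon-Nikod\'ym derivative is nonnegative $\nu_\rho$-almost everywhere. On the other hand, because the sum is finite, linearity of the Radon-Nikod\'ym derivative gives that this derivative equals $\sum_{i,j} \overline{c_i}\,c_j\, \frac{d\nu_{ij}}{d\nu_\rho} = \big\langle \frac{d\nu}{d\nu_\rho}(\cdot)\,\xi,\xi\big\rangle$ $\nu_\rho$-a.e. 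Therefore there is a $\nu_\rho$-null set $N_\xi$ off of which $\big\langle \frac{d\nu}{d\nu_\rho}(x)\,\xi,\xi\big\rangle \geq 0$.

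Next I would glue these together. Let $D$ be the countable set of all finite linear combinations of the $e_n$ with coefficients in $\mathbb Q + i\mathbb Q$, and set $N = \bigcup_{\xi \in D} N_\xi$, again a $\nu_\rho$-null set. For every $x \notin N$ we have $\big\langle \frac{d\nu}{d\nu_\rho}(x)\,\xi,\xi\big\rangle \geq 0$ for all $\xi \in D$. Since $\frac{d\nu}{d\nu_\rho}(x)$ is by hypothesis a bounded operator, the map $\xi \mapsto \big\langle \frac{d\nu}{d\nu_\rho}(x)\,\xi,\xi\big\rangle$ is continuous on $\mathcal H$, and $D$ is dense; hence $\big\langle \frac{d\nu}{d\nu_\rho}(x)\,\xi,\xi\big\rangle \geq 0$ for all $\xi \in \mathcal H$. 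In a complex Hilbert space this forces $\frac{d\nu}{d\nu_\rho}(x)$ to be self-adjoint and positive, so $\frac{d\nu}{d\nu_\rho}(x) \in \mathcal B(\mathcal H)_+$ for all $x \notin N$, which is the claim.

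The only genuine subtlety is that the null set produced from a single test vector depends on that vector, so one cannot directly conclude positivity at a fixed $x$ against all $\xi$ at once; the separability of $\mathcal H$, which is in force throughout this section, together with the boundedness of the derivative (precisely the hypothesis that $\frac{d\nu}{d\nu_\rho}$ exists as a quantum random variable), is exactly what lets us pass from a countable dense family of test vectors to all of $\mathcal H$. Everything else is a direct application of the classical Radon-Nikod\'ym theorem and elementary functional analysis.
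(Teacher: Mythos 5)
Your argument is correct and follows essentially the same route as the paper: test the quadratic form $\xi\mapsto\langle\frac{d\nu}{d\nu_\rho}(x)\xi,\xi\rangle$ against finite linear combinations of the basis vectors, use positivity of $\nu$ and the classical Radon--Nikod\'ym theorem to get nonnegativity $\nu_\rho$-a.e.\ for each fixed $\xi$, and then discard a countable union of null sets. In fact you are slightly more careful than the paper, which compresses the passage from ``a.e.\ for each fixed $\xi$'' to ``a.e.\ on the whole span'' into a single sentence; your explicit use of rational coefficients, density, and boundedness of $\frac{d\nu}{d\nu_\rho}(x)$ is exactly the detail being elided there.
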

\begin{proof}
For $n\geq 1$ and $\xi = \sum_{i=1}^n \xi_i e_i$, $\|\xi\| = 1$ we have that for every $E\in \mathcal O(X)$
\begin{align*}
\int_E \left\langle \frac{d\nu}{d\nu_\rho} \xi,\xi\right\rangle d\nu_\rho 
&= \int_E \left(\sum_{i,j=1}^n \frac{d\nu_{ij}}{d\nu_\rho} \xi_j \overline{\xi_i}\right) d\nu_\rho
\\ & = \sum_{i,j=1}^n \nu_{ij}(E)\xi_j\overline{\xi_i}
\\ & = \langle \nu(E)\xi,\xi\rangle \geq 0.
\end{align*}
From this one can see that on the subspace ${\rm span}\{e_1,\dots, e_n\}$ we have that $\frac{d\nu}{d\nu_\rho}(x) \geq 0$ almost everywhere with respect to $\nu_\rho.$ 
Therefore, this is true on all of $\mathcal H$ since the countable union of measure zero sets is measure zero.
\end{proof}

Following \cite{FPS} we define integrability with respect to a positive operator-valued measure.
\begin{definition}
Let $\nu : \mathcal O(X) \rightarrow \mathcal B(\mathcal H)$ be a POVM such that $\frac{d\nu}{d\nu_\rho}$ exists.
A positive quantum random variable $f: X \rightarrow \mathcal{B}(\mathcal H)$ is said to be {\em $\nu$-integrable} if the function
\[
f_s(x) = \tr\left( s\left(\frac{d\nu}{d\nu_\rho}(x) \right)^{1/2} f(x) \left(\frac{d\nu}{d\nu_\rho}(x) \right)^{1/2} \right), \ x\in X
\]
is $\nu_\rho$-integrable for every state $s\in S(\mathcal H)$. 

We say that an arbitrary quantum random variable $f: X \rightarrow \mathcal{B}(\mathcal H)$ is $\nu$-integrable if and only if $({\rm Re} f)_+, ({\rm Re} f)_-, ({\rm Im} f)_+$ and $({\rm Im} f)_-$ are $\nu$-integrable.
\end{definition}

Some classical results transfer over very well.

\begin{proposition}\label{prop:fgnu-int}
Let $f, g: X \rightarrow B(\mathcal H)$ be positive quantum random variables and $\nu : \mathcal O(X) \rightarrow B(\mathcal H)$ be a POVM such that $\frac{d\nu}{d\nu_\rho}$ exists. If $f(x) \leq g(x)$ almost everywhere with respect to $\nu$ and $g$ is $\nu$-integrable then $f$ is also $\nu$-integrable.
\end{proposition}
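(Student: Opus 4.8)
The plan is to reduce the operator statement to the classical comparison test for Lebesgue integrability, carried out pointwise after conjugating by $\bigl(\tfrac{d\nu}{d\nu_\rho}(x)\bigr)^{1/2}$ and pairing against an arbitrary state $s$. Write $D := \tfrac{d\nu}{d\nu_\rho}$. Since $\rho$ is full-rank, $\nu$ and $\nu_\rho$ are mutually absolutely continuous, so a Borel set is $\nu$-null if and only if it is $\nu_\rho$-null; the hypothesis then furnishes a Borel set $N$ with $\nu_\rho(N)=0$ outside of which $f(x)\le g(x)$, and, enlarging $N$ if necessary (using the preceding proposition on positivity of $D$), we may assume in addition that $D(x)\ge 0$ for all $x\notin N$, so that $D^{1/2}$ is a positive quantum random variable by Lemma \ref{lem:borel}.

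First I would note that for $x\notin N$, conjugation by the self-adjoint operator $D(x)^{1/2}$ preserves operator order, whence
\[
0 \;\le\; D(x)^{1/2} f(x) D(x)^{1/2} \;\le\; D(x)^{1/2} g(x) D(x)^{1/2}.
\]
Fixing a state $s\in S(\mathcal H)$ and using that $s\ge 0$ implies both $\tr(sA)\le\tr(sB)$ whenever $A\le B$ and $\tr(sA)\ge 0$ whenever $A\ge 0$, this yields $0\le f_s(x)\le g_s(x)$ for every $x\notin N$, i.e. $0\le f_s\le g_s$ $\nu_\rho$-almost everywhere. Here $f_s$ is a bona fide Borel function: $D^{1/2}$ and $f$ are quantum random variables, hence so is the product $D^{1/2}fD^{1/2}$, and $x\mapsto\tr(sA(x))$ is Borel for any quantum random variable $A$ and any $s\in\mathcal T(\mathcal H)$. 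Since $g$ is $\nu$-integrable we have $g_s\in L_1(X,\nu_\rho)$, so the classical comparison test gives $f_s\in L_1(X,\nu_\rho)$. As $s\in S(\mathcal H)$ was arbitrary, $f$ is $\nu$-integrable.

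The only steps that require any care — hence the (mild) main obstacle — are the two ``transfer'' points: that $\nu$-null and $\nu_\rho$-null sets coincide, which is precisely the mutual absolute continuity recorded just before the definition of the Radon-Nikod\'ym derivative, and the Borel measurability of $f_s$, which rests on products and trace-functional compositions of quantum random variables again being quantum random variables. Everything else is the elementary operator inequality $A\le B\Rightarrow T^*AT\le T^*BT$ and monotonicity of the integral of nonnegative measurable functions, both immediate.
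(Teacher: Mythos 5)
Your proof is correct and follows essentially the same route as the paper: both arguments reduce to showing $0\le f_s\le g_s$ almost everywhere with respect to $\nu_\rho$ (the paper does this by writing $f_s=\tr\bigl(s^{1/2}\bigl(\tfrac{d\nu}{d\nu_\rho}\bigr)^{1/2}f\bigl(\tfrac{d\nu}{d\nu_\rho}\bigr)^{1/2}s^{1/2}\bigr)$ and using monotonicity of the trace under conjugation, which is the same mechanism as your $A\le B\Rightarrow T^*AT\le T^*BT$ plus $\tr(s\,\cdot\,)$ monotonicity) and then invoking the classical comparison test. Your added remarks on the $\nu$-null versus $\nu_\rho$-null transfer and the Borel measurability of $f_s$ are details the paper leaves implicit, but they do not change the argument.
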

\begin{proof}
For any state $s\in S(\mathcal H)$ we have that 
\begin{align*}
f_s &= \tr\left(s\left(\frac{d\nu}{d\nu_\rho}\right)^{1/2}f\left(\frac{d\nu}{d\nu_\rho}\right)^{1/2}\right)
\\ &= \tr\left(s^{1/2}\left(\frac{d\nu}{d\nu_\rho}\right)^{1/2}f\left(\frac{d\nu}{d\nu_\rho}\right)^{1/2}s^{1/2}\right)
\\ &\leq \tr\left(s^{1/2}\left(\frac{d\nu}{d\nu_\rho}\right)^{1/2}g\left(\frac{d\nu}{d\nu_\rho}\right)^{1/2}s^{1/2}\right)
\\ &=g_s,
\end{align*}
almost everywhere with respect to $\nu_\rho$. Therefore, because $g_s$ is $\nu_\rho$-integrable then so is $f_s$ and thus $f$ is $\nu$-integrable.
\end{proof}

\begin{proposition}\label{prop:rdunique}
Let $\nu : \mathcal{O}(X) \rightarrow \mathcal B(\mathcal H)$ be a POVM such that $\frac{d\nu}{d\nu_\rho}$ exists.
The constant function $x\mapsto I_\mathcal H$ is $\nu$-integrable.
Moreover, $\frac{d\nu}{d\nu_\rho}$ is independent of choice of orthonormal basis almost everywhere with respect to $\nu_\rho$.
\end{proposition}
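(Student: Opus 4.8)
The plan is to handle the two assertions in turn, both reducing to the defining property of the Radon--Nikod\'ym derivative together with the integral identity established in the proof of the previous proposition.

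For the first claim, that $x \mapsto I_{\mathcal H}$ is $\nu$-integrable, I would fix a state $s \in S(\mathcal H)$ and compute $f_s$ with $f \equiv I_{\mathcal H}$, so that
\[
f_s(x) = \tr\!\left( s \left(\tfrac{d\nu}{d\nu_\rho}(x)\right)^{1/2} \left(\tfrac{d\nu}{d\nu_\rho}(x)\right)^{1/2} \right) = \tr\!\left( s\, \tfrac{d\nu}{d\nu_\rho}(x) \right).
\]
The key point is that this function is $\nu_\rho$-integrable: one wants $\int_X \tr(s\,\tfrac{d\nu}{d\nu_\rho}) \, d\nu_\rho < \infty$. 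Approximating $s$ in trace norm by finite-rank states supported on $\mathrm{span}\{e_1,\dots,e_n\}$ — or, more directly, writing $s = \sum_k \lambda_k e_k'\langle \cdot, e_k'\rangle$ and using the diagonal estimate as in the preceding proof — the integral of the truncation equals $\tr(s_n \nu(X))$ by the same telescoping computation that appeared there, since $\int_X \tfrac{d\nu_{ij}}{d\nu_\rho}\, d\nu_\rho = \nu_{ij}(X) = \langle \nu(X) e_j, e_i\rangle$. Because $\nu$ is a POVM it is bounded, so $\nu(X)$ is a bounded positive operator and $\tr(s\,\nu(X)) \le \|\nu(X)\| < \infty$; monotone convergence then gives $\int_X \tr(s\,\tfrac{d\nu}{d\nu_\rho})\, d\nu_\rho = \tr(s\,\nu(X)) < \infty$, which is the required $\nu_\rho$-integrability.

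For the second claim, independence of the orthonormal basis, I would let $\{e_n\}$ and $\{e_n'\}$ be two orthonormal bases, with associated derivatives $D, D'$ defined via $\langle D(x) e_j, e_i\rangle = \tfrac{d\nu_{ij}}{d\nu_\rho}(x)$ and the analogous formula for $D'$ using $\nu_{ij}'(E) = \langle \nu(E) e_j', e_i'\rangle$. The goal is $D(x) = D'(x)$ for $\nu_\rho$-almost every $x$. The natural route is to show that both operators have the same matrix entries in, say, the basis $\{e_n\}$, almost everywhere. Writing $e_k' = \sum_m u_{mk} e_m$ for the unitary change-of-basis matrix $(u_{mk})$, one has $\nu_{ij}'(E) = \sum_{m,l} \overline{u_{mi}} u_{lj} \nu_{ml}(E)$ for every $E$; since this is a (countable) identity of complex measures all absolutely continuous with respect to $\nu_\rho$, uniqueness in the classical Radon--Nikod\'ym theorem gives $\tfrac{d\nu_{ij}'}{d\nu_\rho} = \sum_{m,l} \overline{u_{mi}} u_{lj} \tfrac{d\nu_{ml}}{d\nu_\rho}$ a.e., i.e. $\langle D'(x) e_j', e_i'\rangle = \langle D(x) e_j', e_i'\rangle$ a.e. for each fixed pair $(i,j)$; taking the countable union of the exceptional sets shows $D = D'$ a.e. The one technical wrinkle — which I expect to be the main obstacle — is justifying the interchange of the infinite sum over $(m,l)$ with the Radon--Nikod\'ym derivative (equivalently, controlling convergence of $\sum_{m,l}\overline{u_{mi}}u_{lj}\nu_{ml}(E)$ uniformly enough to pass to densities); this is handled by noting that for fixed $i,j$ the coefficients $(\overline{u_{mi}})_m$ and $(u_{lj})_l$ are $\ell^2$, and the partial sums converge in the appropriate measure-theoretic sense because $\nu$ is bounded, so a dominated-convergence argument on $L_1(X,\nu_\rho)$ applies. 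Once the matrix entries agree a.e. and $D, D'$ are already known (by hypothesis) to be everywhere bounded operators, they coincide as operators almost everywhere.
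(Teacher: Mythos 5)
Your proof of the first claim is essentially the paper's: truncate $s$ to finite rank, use $\int_X \frac{d\nu_{ij}}{d\nu_\rho}\,d\nu_\rho = \nu_{ij}(X)$ entrywise to get $\int_X \tr\bigl(s_n\frac{d\nu}{d\nu_\rho}\bigr)\,d\nu_\rho = \tr(s_n\nu(X))$, and pass to the limit by monotone convergence using the boundedness of $\nu(X)$. Nothing to add there.

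For basis-independence you take a genuinely different route, and it has one soft spot. The paper applies the identity $\int_E \tr\bigl(s\,\frac{d\nu}{d\nu_\rho}\bigr)\,d\nu_\rho = \tr(s\,\nu(E))$ (the content of the first half of the proof, valid for all trace-class $s$ by linearity) to the rank-one operator $s = Ue_{kl}U^*$, which immediately identifies $\bigl\langle \frac{d\nu}{d\nu_\rho}f_l,f_k\bigr\rangle$ as the classical density of $\nu^f_{kl}$. You instead expand $\nu'_{ij} = \sum_{m,l}\overline{u_{mi}}u_{lj}\nu_{ml}$ and try to pass to densities term by term. You correctly flag the interchange of the infinite sum with the Radon--Nikod\'ym derivative as the crux, but your proposed justification --- dominated convergence in $L_1(X,\nu_\rho)$ --- does not go through as stated: writing $P_N$ for the projection onto ${\rm span}\{e_1,\dots,e_N\}$, the partial sums of the densities equal $\bigl\langle \frac{d\nu}{d\nu_\rho}(x)P_Ne_j', P_Ne_i'\bigr\rangle$, and the only evident dominating function is $x\mapsto \bigl\|\frac{d\nu}{d\nu_\rho}(x)\bigr\|$, which is finite pointwise by hypothesis but is not known to be $\nu_\rho$-integrable. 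The interchange can be salvaged without domination: since $\|\nu(E)\|\leq\|\nu(X)\|$ for every $E$ and $P_Ne_j'\rightarrow e_j'$ in norm, the partial-sum measures converge to $\nu'_{ij}$ uniformly over $\mathcal{O}(X)$, hence in total variation, hence their densities converge in $L_1(X,\nu_\rho)$; passing to an a.e.\ convergent subsequence and comparing with the pointwise limit $\bigl\langle \frac{d\nu}{d\nu_\rho}(x)e_j', e_i'\bigr\rangle$ gives the claim. With that repair your argument is correct; the paper's route is shorter because it recycles the trace-integral identity already established in the first half rather than re-proving a convergence statement at the level of measures.
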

\begin{proof}
For any finite-rank operator $s = \sum_{(i,j)\in F} \lambda_{ij}e_{ij}$, for some finite set of indices $F$, we have
\begin{align*}
\int_X \tr\left(s\frac{d\nu}{d\nu_\rho}\right) d\nu_\rho & = \sum_{(i,j)\in F} \lambda_{ij} \int_X \frac{d\nu_{ji}}{d\nu_\rho} d\nu_\rho
\\ & = \sum_{(i,j)\in F} \lambda_{ij} \nu_{ji}(X)
\\ & = \tr(s\nu(X));
\end{align*}
that is, $\tr\left(s\frac{d\nu}{d\nu_\rho}\right)$ is $\nu_\rho$-integrable.
In particular, let $s\in S(\mathcal H)$. By the spectral decomposition theorem, there is a sequence of finite-rank operators $s_n$ converging to $s$ such that $s_n \leq s_{n+1}$. Thus, by the monotone convergence theorem, since $\tr\left(s_n\frac{d\nu}{d\nu_\rho}\right) \leq \tr\left(s_{n+1}\frac{d\nu}{d\nu_\rho}\right)$,
\begin{align*}
\int_X \tr\left(s\frac{d\nu}{d\nu_\rho}\right) d\nu_\rho & = \int_X \lim_{n\rightarrow \infty} \tr\left(s_n\frac{d\nu}{d\nu_\rho}\right) d\nu_\rho
\\ & = \lim_{n\rightarrow \infty} \int_X \tr\left(s_n\frac{d\nu}{d\nu_\rho}\right) d\nu_\rho
\\ & = \lim_{n\rightarrow \infty} \tr(s_n\nu(X))
\\ & = \tr(s\nu(X)).
\end{align*}
Therefore, $x\mapsto I_\mathcal H$ is $\nu$-integrable.

As for uniqueness of the Radon-Nikod\'ym derivative, suppose $\{f_n\}$ is another orthonormal basis for $\mathcal H$ and let $U$ be the unitary in $\mathcal{B}(\mathcal H)$ such that $Ue_k = f_k$. For every $k,l\geq 1$ denote $\nu_{kl}^f$ the complex measure $\nu_{kl}^f(E) = \langle \nu(E)f_l, f_k\rangle, E\in \mathcal O(X).$
We calculate
\begin{align*}
\int_E \left\langle \frac{d\nu}{d\nu_\rho} f_l, f_k\right\rangle d\nu_\rho & = \int_E \left\langle \frac{d\nu}{d\nu_\rho} Ue_l, Ue_k\right\rangle d\nu_\rho
\\ & = \int_E \tr\left(Ue_{kl}U^* \frac{d\nu}{d\nu_\rho}\right) d\nu_\rho
\\ & = \tr\left(Ue_{kl}U^* \int_E \frac{d\nu}{d\nu_\rho} d\nu_\rho\right)
\\ & = \langle \nu(E)Ue_l, Ue_k \rangle
\\ & = \langle \nu(E)f_l, f_k\rangle
\\ & = \nu_{kl}^f(E).
\end{align*}
Hence, 
\[
\left\langle \frac{d\nu}{d\nu_\rho} f_l, f_k\right\rangle = \frac{d\nu_{kl}^f}{d\nu_\rho}
\]
almost always with respect to $\nu_\rho$ by the uniqueness of the Radon-Nikod\'ym derivative.
\end{proof}

\begin{corollary}\label{cor:measandbndd}
Every essentially bounded quantum random variable $f: X \rightarrow \mathcal{B}(\mathcal H)$ is $\nu$-integrable for a POVM $\nu : \mathcal O(X) \rightarrow \mathcal{B}(\mathcal H)$ such that $\frac{d\nu}{d\nu_\rho}$ exists.
\end{corollary}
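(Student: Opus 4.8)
The plan is to reduce immediately to the case of a positive quantum random variable and then dominate it by a constant multiple of the identity, invoking the $\nu$-integrability of $x\mapsto I_{\mathcal H}$ from Proposition \ref{prop:rdunique} together with the comparison result of Proposition \ref{prop:fgnu-int}.

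First I would record the reduction. By definition $f$ is $\nu$-integrable precisely when the four positive quantum random variables $({\rm Re}\,f)_+, ({\rm Re}\,f)_-, ({\rm Im}\,f)_+, ({\rm Im}\,f)_-$ are $\nu$-integrable (this is also the content of Corollary \ref{cor:four} applied to $f$). These are indeed positive quantum random variables by Lemma \ref{lem:borel}, applied to the self-adjoint quantum random variables ${\rm Re}\,f = \tfrac12(f+f^*)$ and ${\rm Im}\,f = \tfrac{1}{2i}(f-f^*)$. Moreover each of them is essentially bounded, since pointwise $\|({\rm Re}\,f(x))_\pm\| \le \|{\rm Re}\,f(x)\| \le \|f(x)\|$ and likewise for the imaginary parts; the essential bound for $f$ (which we may read with respect to $\nu$, equivalently with respect to $\nu_\rho$, as $\nu$ and $\nu_\rho$ are mutually absolutely continuous) therefore serves for each piece. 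Hence it suffices to treat a positive, essentially bounded quantum random variable $f$.

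Next, let $M$ denote the essential supremum of $x \mapsto \|f(x)\|$. Since $f(x) \ge 0$ we have $f(x) \le \|f(x)\|\, I_{\mathcal H} \le M\, I_{\mathcal H}$ for $\nu$-almost every $x$. The constant function $x \mapsto I_{\mathcal H}$ is $\nu$-integrable by Proposition \ref{prop:rdunique}, and $\nu$-integrability is plainly preserved under multiplication by a nonnegative scalar, because for any state $s\in S(\mathcal H)$ one has $(M g)_s = M\, g_s$, which is $\nu_\rho$-integrable whenever $g_s$ is. Thus $x \mapsto M\, I_{\mathcal H}$ is $\nu$-integrable, and Proposition \ref{prop:fgnu-int} (with this constant function playing the role of $g$) shows that $f$ is $\nu$-integrable. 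Combining with the previous paragraph completes the argument.

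I do not expect a genuine obstacle here; the only points needing a little care are bookkeeping: verifying that ${\rm Re}$, ${\rm Im}$ and $(\cdot)_\pm$ preserve the quantum-random-variable property (via continuity of the relevant functions on $\mathcal{B}(\mathcal H)_{sa}$, as in Lemma \ref{lem:borel}), and observing that ``essentially bounded'' may be taken with respect to $\nu_\rho$ by mutual absolute continuity, so that Propositions \ref{prop:fgnu-int} and \ref{prop:rdunique} apply verbatim.
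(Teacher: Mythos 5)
Your proof is correct and follows essentially the same route as the paper's: decompose $f$ into its four positive parts, bound each by $M I_{\mathcal H}$ almost everywhere, and conclude via Proposition \ref{prop:rdunique} and Proposition \ref{prop:fgnu-int}. The extra bookkeeping you supply (that $({\rm Re}\,f)_\pm$, $({\rm Im}\,f)_\pm$ remain essentially bounded and that scalar multiples preserve $\nu$-integrability) is exactly what the paper leaves implicit.
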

\begin{proof}
Let $M>0$ such that $\|f(x)\| \leq M$ almost everywhere with respect to $\nu$. 
Thus, $({\rm Re} f)_+, ({\rm Re} f)_-, ({\rm Im} f)_+$ and $({\rm Im} f)_-$ are also  essentially bounded by $M$. By the previous two propositions, since each of these is less than or equal to the $\nu$-integrable function $x\mapsto MI_\mathcal H$ almost everywhere with respect to $\nu$,  they are all $\nu$-integrable.
\end{proof}

In general, for non-positive, not essentially bounded, quantum random variables, it is unlikely that the $\nu_\rho$-integrability of $f_s, s\in \mathcal{T}(\mathcal H)$, would imply the $\nu$-integrability of $f$ and so we need the stronger definition above. One should note that our definition of $\nu$-integrability is stronger than that found in \cite{FPS}.

\begin{example}
Define a POVM $\nu : [0,2] \rightarrow M_2$ by 
\[
\nu(E) = \left[\begin{array}{cc} \mu(E \cap [0,1]) & 0 \\ 0 & \mu(E \cap [1,2])  \end{array}\right]
\]
where $\mu$ is the Lebesgue measure. Let $\rho = \left[\begin{smallmatrix}1/2&0\\ 0 & 1/2\end{smallmatrix}\right]$ and so $\nu_\rho(E) = \frac{1}{2}\mu(E \cap [0,1]) + \frac{1}{2}\mu(E \cap [1,2])$.
Hence, 
\[
\frac{d\nu}{d\nu_\rho} = \left[\begin{array}{cc} 2\chi_{[0,1]} & 0 \\ 0 & 2\chi_{(1,2]} \end{array}\right].
\]
Let $f : [0,2] \rightarrow M_2$ be the self-adjoint quantum random variable defined by 
\begin{align*}
f(x) = \frac{\chi_{(0,1](x)}}{x}\left[\begin{array}{cc} 0 & 1 \\ 1 & 0 \end{array}\right] & = \frac{\chi_{(0,1]}(x)}{2x}\left[\begin{array}{cc} 1&1 \\ 1&1\end{array}\right] - \frac{\chi_{(0,1]}(x)}{2x}\left[\begin{array}{cc} 1&-1\\ -1&1\end{array}\right]
\\ & = f_+(x) - f_-(x).
\end{align*}
Now $f_s$ is $\nu_\rho$-integrable for every $s\in M_2$ because
\[
\left(\frac{d\nu}{d\nu_\rho}(x)\right)^{1/2}f(x)\left(\frac{d\nu}{d\nu_\rho}(x)\right)^{1/2} = 0_2
\]
whereas 
\[
\int_{[0,2]} \tr\left(e_{11}\left(\frac{d\nu}{d\nu_\rho}(x)\right)^{1/2}f_+(x)\left(\frac{d\nu}{d\nu_\rho}(x)\right)^{1/2}\right)d\nu_\rho 
= \int_{[0,2]} \frac{\chi_{(0,1]}(x)}{2x} d\mu = \infty.
\]
Therefore, $f_s$ is $\nu_\rho$-integrable for every state but $f$ is not $\nu$-integrable.
\end{example}

If we consider a quantum random variable $f: X \rightarrow \mathcal{B}(\mathcal H)$, we can define $|f|$ to be the operator analogue of the absolute value function: $|f(x)|:= (f(x)^*f(x))^{1/2}$. %It will be shown that for finite-dimensional $\mathcal{H}$ we have that $f$ is $\nu$-integrable if and only if $|f|$ is $\nu$-integrable. This does not hold for infinite dimensional $\mathcal{H}$ in general but requires an extra assumption.

\begin{proposition}
Let $f: X \rightarrow \mathcal{B}(\mathcal H)$ be a normal quantum random variable and let   $\nu : \mathcal O(X) \rightarrow \mathcal{B}(\mathcal H)$ be a POVM for which $\frac{d\nu}{d\nu_\rho}$ exists. Then $f$ is $\nu$-integrable if and only if $|f|$ is $\nu$-integrable.
\end{proposition}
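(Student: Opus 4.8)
The plan is to reduce the equivalence to the domination criterion of Proposition~\ref{prop:fgnu-int}, using the fact that normality of $f(x)$ forces its real and imaginary parts to commute. Write $a = ({\rm Re} f)_+$, $b = ({\rm Re} f)_-$, $c = ({\rm Im} f)_+$, $d = ({\rm Im} f)_-$. Since the adjoint, the linear operations, and multiplication on $\mathcal{B}(\mathcal H)$ are norm-continuous, $x\mapsto {\rm Re} f(x)$ and $x\mapsto {\rm Im} f(x)$ are self-adjoint quantum random variables and $x\mapsto f(x)^*f(x)$ is a positive one; by Lemma~\ref{lem:borel}, $a,b,c,d$ and $|f| = z^{1/2}\circ(f^*f)$ are all positive quantum random variables. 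By the definition of $\nu$-integrability for arbitrary quantum random variables, $f$ is $\nu$-integrable precisely when each of $a,b,c,d$ is, so it suffices to prove (i) if $a,b,c,d$ are $\nu$-integrable then so is $|f|$, and (ii) if $|f|$ is $\nu$-integrable then so is each of $a,b,c,d$.

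The algebraic input is the following pointwise identity, which is where normality is essential. For each $x$, normality of $f(x)$ makes ${\rm Re} f(x)$ and ${\rm Im} f(x)$ commute, hence $a(x),b(x),c(x),d(x)$ pairwise commute and, using $a(x)b(x) = c(x)d(x) = 0$,
\[
|f(x)|^2 = f(x)^*f(x) = ({\rm Re} f(x))^2 + ({\rm Im} f(x))^2 = a(x)^2 + b(x)^2 + c(x)^2 + d(x)^2 .
\]
Thus $|f(x)|$ lies in the commutative $C^*$-algebra generated by $a(x),b(x),c(x),d(x)$, and via the Gelfand transform any scalar inequality between nonnegative functions yields the corresponding operator inequality on this algebra.

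For (ii): from the identity, $a(x)^2 \leq |f(x)|^2$, and since $a(x)$ and $|f(x)|$ commute the Gelfand picture gives $a(x)\leq |f(x)|$ for every $x$, and likewise for $b,c,d$. As $|f|$ is $\nu$-integrable, Proposition~\ref{prop:fgnu-int} (applied with $g = |f|$) shows each of $a,b,c,d$ is $\nu$-integrable. For (i): the scalar inequality $\sqrt{t_1^2+t_2^2+t_3^2+t_4^2}\leq t_1+t_2+t_3+t_4$ for $t_i\geq 0$ transfers, on the commutative algebra above, to $|f(x)| \leq a(x)+b(x)+c(x)+d(x)$ for every $x$. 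A sum of finitely many $\nu$-integrable positive quantum random variables is again $\nu$-integrable, since the defining functional $g\mapsto g_s$ is additive in $g$; hence $a+b+c+d$ is $\nu$-integrable, and Proposition~\ref{prop:fgnu-int} (with $g = a+b+c+d$) gives that $|f|$ is $\nu$-integrable.

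The only real subtlety is the pointwise identity $f^*f = ({\rm Re} f)^2 + ({\rm Im} f)^2$ together with the simultaneous commutativity of $a,b,c,d$; without normality neither of the two bounds $a\leq |f|$ nor $|f|\leq a+b+c+d$ need hold, so this is the load-bearing step. Everything else is routine: checking that the relevant compositions remain quantum random variables (covered by Lemma~\ref{lem:borel} and norm-continuity of the adjoint, multiplication, and $z^{1/2}$), and observing that the inequalities produced above hold everywhere, hence in particular $\nu$-almost everywhere as required by Proposition~\ref{prop:fgnu-int}.
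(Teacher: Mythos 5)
Your proof is correct and follows essentially the same route as the paper's: both arguments use normality to place $({\rm Re} f)_\pm$, $({\rm Im} f)_\pm$, and $|f|$ in a commutative C$^*$-algebra, establish the two-sided comparability $({\rm Re} f)_\pm, ({\rm Im} f)_\pm \leq |f| \leq ({\rm Re} f)_+ + ({\rm Re} f)_- + ({\rm Im} f)_+ + ({\rm Im} f)_-$ pointwise, and conclude via Proposition~\ref{prop:fgnu-int}. The only cosmetic difference is that you obtain the operator inequalities from the Gelfand transform on the commutative subalgebra, whereas the paper invokes operator monotonicity of the square root; both are sound.
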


\begin{proof}
Suppose $A \in \mathcal B(\mathcal H)$ is normal then by functional calculus $({\rm Re} A)_+, ({\rm Re} A)_-, ({\rm Im} A)_+,$ $ ({\rm Im} A)_- \in C^*(I, A)$ and so all commute. Recall that from this one gets that
\[
A^*A = ({\rm Re} A)^2 + ({\rm Im} A)^2 = |{\rm Re} A|^2 + |{\rm Im} A|^2 \geq |{\rm Re} A|^2 \ \textrm{or} \ |{\rm Im} A|^2.
\]
By the operator monotonicity of the square root function we get 
\[
2|A| \geq |{\rm Re} A| + |{\rm Im} A| = ({\rm Re} A)_+ + ({\rm Re} A)_- + ({\rm Im} A)_+ + ({\rm Im} A)_-.
\]
Similarly, 
\begin{align*}
A^*A & = (|{\rm Re} A| + |{\rm Im} A|)^{1/2}(|{\rm Re} A| - |{\rm Im} A|)(|{\rm Re} A| + |{\rm Im} A|)^{1/2}
\\ & \leq (|{\rm Re} A| + |{\rm Im} A|)^2
\end{align*}
which again by operator monotonicity gives that
\[
|A| \leq ({\rm Re} A)_+ + ({\rm Re} A)_- + ({\rm Im} A)_+ + ({\rm Im} A)_-.
\]
This comparability gives by Proposition \ref{prop:fgnu-int} that $f$ is $\nu$-integrable if and only if $|f|$ is $\nu$-integrable.
\end{proof}

In general there exists an operator for which there is no comparability between its absolute value and its four positive summands so integrability does not imply ``absolute'' integrability nor is the converse true. %However, in finite dimensions there is such a comparability.

In the following theorem, for $i,j\geq 1$, let $s_{ij} = e_{ji}\in \mathcal{B}(\mathcal H)$. Thus, 
\[
\tr(s_{ij}A) = \langle A e_j, e_i\rangle
\]
extracts the $i,j$ entry of $A\in \mathcal{B}(\mathcal H)$. The result can be seen as a generalization of the theory of posterior states in \cite{Ozawa}.

\begin{theorem}\label{thm:nu-integral}
For every POVM $\nu$ for which $\frac{d\nu}{d\nu_\rho}$ exists, the formula 
\[
\int_X f d\nu := \sum_{i,j\geq 1} \left(\int_X f_{s_{ij}} d\nu_\rho\right) \otimes e_{ij}
\]
defines the unique operator on $\mathcal H$ that satisfies
\[
\tr\left(s \int_X f d\nu\right) = \int_X f_s d\nu_\rho
\]
for all $s\in \mathcal{T}(\mathcal H).$ Moreover, $\int_X f d\nu$ is independent of the choice of density operator $\rho$ and orthonormal basis $\{e_i\}$.
\end{theorem}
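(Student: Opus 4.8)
The plan is to build the operator $\int_X f d\nu$ entry-by-entry and then verify it is bounded by exhibiting the trace-pairing identity against a dense enough collection of functionals. First I would assume $f$ is positive; the general case follows by splitting into the four positive pieces $({\rm Re}\,f)_\pm, ({\rm Im}\,f)_\pm$ and using linearity, since $f$ being $\nu$-integrable means precisely that each of these four pieces is $\nu$-integrable. So suppose $f \geq 0$ is $\nu$-integrable. For each pair $i,j$, $f_{s_{ij}}(x) = \tr\!\left(e_{ji}\left(\tfrac{d\nu}{d\nu_\rho}(x)\right)^{1/2} f(x)\left(\tfrac{d\nu}{d\nu_\rho}(x)\right)^{1/2}\right)$ is by hypothesis $\nu_\rho$-integrable — wait, the definition only gives $\nu_\rho$-integrability of $f_s$ for $s \in S(\mathcal H)$; I would first note that by linearity this extends to $\nu_\rho$-integrability of $f_s$ for all $s$ in the span of states, in particular for $s = e_{ji}$, since $e_{ji}$ is a finite linear combination of (rescaled) rank-one projections via the polarization identity $e_{ji} = \sum_{k=0}^3 i^k \tfrac{1}{4}(e_i + i^k e_j)(e_i + i^k e_j)^*$. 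Hence each coefficient $c_{ij} := \int_X f_{s_{ij}} d\nu_\rho$ is a well-defined complex number, and one checks $\overline{c_{ij}} = c_{ji}$ from self-adjointness of the integrand matrices and positivity gives $c_{ii} \geq 0$.

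The crux is showing that the matrix $(c_{ij})$ defines a bounded operator $A$ on $\mathcal H$ and that $A = \int_X f d\nu$ has the claimed trace-pairing property. I would argue as follows: for any unit vector $\xi = \sum_{i=1}^n \xi_i e_i$ in a finite-dimensional coordinate subspace, $\langle A\xi,\xi\rangle = \sum_{i,j=1}^n c_{ij}\xi_j\overline{\xi_i} = \int_X \tr\!\left(P_\xi \left(\tfrac{d\nu}{d\nu_\rho}\right)^{1/2} f \left(\tfrac{d\nu}{d\nu_\rho}\right)^{1/2}\right) d\nu_\rho = \int_X f_{P_\xi} d\nu_\rho$ where $P_\xi = \xi\xi^*$ is a state. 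Now apply Proposition \ref{prop:rdunique}, which says $x\mapsto I_\mathcal H$ is $\nu$-integrable, together with the monotone/dominated structure: since $f$ is $\nu$-integrable there is — this is the subtle point — a uniform bound. Actually the cleanest route: I expect one must first prove $\int_X f_s d\nu_\rho$ is a \emph{bounded} function of $s \in S(\mathcal H)$, i.e. $\sup_{s\in S(\mathcal H)} \int_X f_s d\nu_\rho = C < \infty$. This should follow because $\nu$-integrability of the single function $f$ forces, via the uniform boundedness principle applied to the family of functionals $s \mapsto \int_X f_s d\nu_\rho$ on the Banach space $\mathcal T(\mathcal H)$ (each is finite by hypothesis, each is linear, and they are pointwise the increasing limit of the bounded functionals $s \mapsto \int_{X_N} f_s d\nu_\rho$ over an exhausting sequence $X_N$), a finite supremum over the unit ball. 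Granting $C < \infty$, we get $\langle A\xi,\xi\rangle = \int_X f_{P_\xi}d\nu_\rho \leq C$ for all unit $\xi$, and combined with positivity of each such pairing this shows $A$ is a bounded positive operator with $\|A\| \leq C$.

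With $A$ established as a bounded operator, the trace-pairing identity $\tr(sA) = \int_X f_s d\nu_\rho$ is verified first for finite-rank $s$ (immediate from the definition of the $c_{ij}$ and linearity, exactly as in the first display of the proof of Proposition \ref{prop:rdunique}), then extended to all $s \in S(\mathcal H)$ by approximating $s$ from below by an increasing sequence $s_N$ of finite-rank positive operators (spectral theorem) and applying the monotone convergence theorem to both sides — to $\tr(s_N A)\to\tr(sA)$ on the left since $A\geq 0$, and to $\int_X f_{s_N} d\nu_\rho \to \int_X f_s d\nu_\rho$ on the right since $f_{s_N}\leq f_{s_{N+1}}$ pointwise (as $f(x)\geq 0$). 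Linearity then gives it for all $s\in\mathcal T(\mathcal H)$. Uniqueness of $A$ is immediate: $\mathcal T(\mathcal H)$ separates points of $\mathcal B(\mathcal H)$, so the identity $\tr(sA) = \int_X f_s d\nu_\rho$ pins $A$ down. Finally, independence of $\rho$ and of the basis $\{e_i\}$: the right-hand side $\int_X f_s d\nu_\rho$ appears to depend on $\rho$, but by Corollary \ref{cor:exists} the derivative exists for all full-rank $\rho$ simultaneously and one checks $f_s$ is in fact $\rho$-independent — indeed $\left(\tfrac{d\nu}{d\nu_\rho}(x)\right)^{1/2} = \left(\tfrac{d\nu}{d\nu_{\rho'}}(x)\right)^{1/2}\left(\tfrac{d\nu_{\rho'}}{d\nu_\rho}(x)\right)^{1/2}$ by the classical chain rule, and the scalar density factor cancels against the change-of-measure $d\nu_\rho = \tfrac{d\nu_\rho}{d\nu_{\rho'}}d\nu_{\rho'}$ inside the integral, so $\int_X f_s d\nu_\rho = \int_X f_s d\nu_{\rho'}$; basis-independence is already the content of the second half of Proposition \ref{prop:rdunique} (the formula $\langle\tfrac{d\nu}{d\nu_\rho}f_l,f_k\rangle = \tfrac{d\nu_{kl}^f}{d\nu_\rho}$ transforms correctly under unitaries) and descends to $\int_X f d\nu$ since that object is characterized basis-freely by the trace pairing. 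The main obstacle I anticipate is the uniform bound $C<\infty$ of the second paragraph: one must be careful that $\nu$-integrability of $f$ as defined (namely $f_s\in L_1(\nu_\rho)$ for every state $s$, with no a priori uniformity) really does yield a uniform bound, which is where the uniform boundedness principle — applied to a family of positive linear functionals that are pointwise finite and pointwise monotone limits of bounded ones — must be invoked carefully.
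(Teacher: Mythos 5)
Your proposal is correct and follows essentially the same route as the paper: reduce to positive $f$, define the integral entrywise, verify the trace pairing on finite-rank $s$, extend to all of $\mathcal T(\mathcal H)$ by approximating a positive trace-class $s$ from below by finite-rank operators and passing to the limit, then get uniqueness from the fact that trace-class operators separate $\mathcal B(\mathcal H)$ and independence of $\rho$ and of the basis from the chain rule and the basis-free trace-pairing characterization. The only substantive difference is at the boundedness step: the paper constructs the a priori unbounded positive operator and simply asserts that finiteness of $\tr\left(s \int_X f\, d\nu\right)$ for every state forces boundedness, whereas you first derive the uniform bound $\sup_{s\in S(\mathcal H)}\int_X f_s\, d\nu_\rho<\infty$ via Banach--Steinhaus applied to the truncated functionals $s\mapsto \int_{X_N} f_s\, d\nu_\rho$ --- a legitimate and more explicit justification of a point the paper leaves implicit.
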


\begin{proof}
Assume that $f$ is a positive quantum random variable. The general case follows by linearity.

Let $n\geq 1$ and let $s = \sum_{i,j=1}^n c_{ij}s_{ij} = \sum_{i,j=1}^n c_{ij} e_{ji}$ be a finite-rank operator. 
Now,
\begin{align*}
\tr\left( s\int_X f d\nu\right) 
& = \tr\left(\left[\begin{array}{ccc} \sum_{i=1}^n c_{i1}\left( \int_X f_{s_{i1}} d\nu_\rho \right) & * & \\
* & \sum_{i=1}^n c_{i2}\left( \int_X f_{s_{i2}} d\nu_\rho \right)
\\ && \ddots 
\end{array}\right] \right)
\\ 
& = \int_X\left( \sum_{i,j=1}^n c_{ij} f_{s_{ij}}\right) d\nu_\rho 
\\ & = \int_X \left( \sum_{i,j}^n c_{ij}\tr\left( s_{ij}\left(\frac{d\nu}{d\nu_\rho} \right)^{1/2} f \left(\frac{d\nu}{d\nu_\rho} \right)^{1/2} \right) \right)d\nu_\rho 
\\ & = \int_X \left(\tr\left( s\left(\frac{d\nu}{d\nu_\rho} \right)^{1/2} f \left(\frac{d\nu}{d\nu_\rho} \right)^{1/2} \right) \right) d\nu_\rho
\\ & = \int_X f_s d\nu_\rho.
\end{align*}

When $s=\xi\xi^*$, for $\xi\in {\rm span}\{e_1,\dots, e_n\}$, the trace property gives that $f_s$ is a positive function and so
\[
\left\langle \int_X f d\nu \xi, \xi \right\rangle = \tr\left( s\int_X f d\nu\right) = \int_X f_s d\nu_\rho \geq 0.
\]
Thus, $\int_X f d\nu$ is a positive, but possibly unbounded, operator when $f$ is a positive quantum random variable. If $\mathcal H$ is finite-dimensional then we are done.

For every infinite-rank, positive, trace-class operator $s$ there is a sequence of finite-rank, positive operators $s_n$ converging to $s$ in norm such that $s_n \leq s_{n+1}$; this is easily seen from the spectral decomposition of a normal compact operator. Now, by the properties of the trace, we have
\begin{align*}
f_s &= \tr\left( s\left(\frac{d\nu}{d\nu_\rho} \right)^{1/2} f \left(\frac{d\nu}{d\nu_\rho} \right)^{1/2} \right)
\\ &= \tr\left(\left(\left(\frac{d\nu}{d\nu_\rho} \right)^{1/2} f \left(\frac{d\nu}{d\nu_\rho} \right)^{1/2}\right)^{1/2} s\left(\left(\frac{d\nu}{d\nu_\rho} \right)^{1/2} f \left(\frac{d\nu}{d\nu_\rho} \right)^{1/2}\right)^{1/2} \right)
\\ & \geq \tr\left(\left(\left(\frac{d\nu}{d\nu_\rho} \right)^{1/2} f \left(\frac{d\nu}{d\nu_\rho} \right)^{1/2}\right)^{1/2} s_n\left(\left(\frac{d\nu}{d\nu_\rho} \right)^{1/2} f \left(\frac{d\nu}{d\nu_\rho} \right)^{1/2}\right)^{1/2} \right)
\\ &= \tr\left( s_n\left(\frac{d\nu}{d\nu_\rho} \right)^{1/2} f \left(\frac{d\nu}{d\nu_\rho} \right)^{1/2} \right)
\\ & = f_{s_n}.
\end{align*}
Thus, because the sequence $f_{s_n}$ is bounded by the $\nu_\rho$-integrable function $f_s$, by Lebesgue's dominated convergence theorem  we have
\begin{align*}
\int_X f_s d\nu_\rho & = \lim_{n\rightarrow \infty} \int_X f_{s_n}d\nu_\rho
\\ & = \lim_{n\rightarrow \infty} \tr\left( s_n\int_X fd\nu  \right)
\\ & = \tr\left( s\int_X fd\nu  \right),
\end{align*}
where the last equality is a priori true in the extended real numbers as it is the limit of an increasing sequence of positive numbers.

Hence, by linearity, $\tr(s\int_X f d\nu) = \int_X f_s d\nu_\rho$ for all %trace-class operators 
$s \in \mathcal T(\mathcal H)$. Therefore, since $\tr\left(s \int_X fd\nu_\rho\right)$ is finite for every state $s\in S(\mathcal H)$ then $\int_X fd\nu_\rho$ must be a bounded operator.

As for uniqueness, suppose $T\in \mathcal{B}(\mathcal H)$ is the bounded operator obtained by integrating and summing over some other orthonormal basis. Thus, the previous argument still holds and
\[
\tr(s T) = \int f_s d\nu_\rho = \tr\left( s \int_X fd\nu\right),\ s\in S(\mathcal H).
\]
The states separate operators on $\mathcal{B}(\mathcal H)$ and so $T = \int_X fd\nu$.

Lastly, suppose $\gamma$ is another full-rank density operator where $\frac{d\nu}{d\nu_\gamma}$ exists. We know that $\nu$, $\nu_\rho$, and $\nu_\gamma$ are all mutually absolutely continuous and so 
\[
\frac{d\nu_\rho}{d\nu_\gamma}\frac{d\nu}{d\nu_\rho} = \sum_{i,j\geq 1} \left(\frac{d\nu_\rho}{d\nu_\gamma}\frac{d\nu_{ij}}{d\nu_\rho}\right) \otimes e_{ij} = \frac{d\nu}{d\nu_\gamma}.
\]
Hence,
\begin{align*}
\left\langle \int_X f d\nu e_j, e_i \right\rangle & = \int_X f_{s_{ij}} d\nu_\rho
\\ & = \int_X \left( \tr\left(s_{ij}\left(\frac{d\nu}{d\nu_\rho} \right)^{1/2} f \left(\frac{d\nu}{d\nu_\rho} \right)^{1/2} \right) \right)\left(\frac{d\nu_\rho}{d\nu_\gamma}\right)d\nu_\gamma
\\ & = \int_X \left(\tr\left(\frac{d\nu_\rho}{d\nu_\gamma}s_{ij}\left(\frac{d\nu}{d\nu_\rho} \right)^{1/2} f \left(\frac{d\nu}{d\nu_\rho} \right)^{1/2} \right) \right)d\nu_\gamma
\\ & = \int_X \left(\tr\left(s_{ij}\left(\frac{d\nu}{d\nu_\gamma} \right)^{1/2} f \left(\frac{d\nu}{d\nu_\gamma} \right)^{1/2} \right) \right)d\nu_\gamma.
\end{align*}
Therefore, the operator $\int_X fd\nu$ is independent of both orthonormal basis and density operator. 
\end{proof}

We extract the last part of the proof as a corollary.

\begin{corollary}\label{cor:exists}
Let $\nu : \mathcal O(X) \rightarrow \mathcal B(\mathcal H)$ be a POVM. If $\frac{d\nu}{d\nu_\rho}$ exists for one full-rank density operator $\nu$ then it exists for any other full-rank density operator $\gamma$. Namely
\[
\frac{d\nu}{d\nu_\gamma} = \frac{d\nu_\rho}{d\nu_\gamma}\frac{d\nu}{d\nu_\rho}.
\]
\end{corollary}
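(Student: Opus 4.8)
The plan is to reduce everything to the scalar Radon--Nikod\'ym theorem applied entrywise, exactly mirroring the last part of the proof of Theorem \ref{thm:nu-integral}. First I would record that since both $\rho$ and $\gamma$ are full-rank, the induced measures $\nu_\rho$ and $\nu_\gamma$ are mutually absolutely continuous (as noted in the paragraph following the definition of $\nu_\rho$, each is mutually absolutely continuous with $\nu$, hence with one another), and both are finite because $\nu$ is bounded. Consequently the classical Radon--Nikod\'ym theorem supplies a density $h := \frac{d\nu_\rho}{d\nu_\gamma} \in L_1(X,\nu_\gamma)$, and I would fix once and for all the nonnegative, real-valued, finite-everywhere Borel representative of $h$ (replacing the $+\infty$ values, which occur only on a $\nu_\gamma$-null set, by $0$).

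Next I would define $g : X \to \mathcal B(\mathcal H)$ by $g(x) := h(x)\,\frac{d\nu}{d\nu_\rho}(x)$. By hypothesis $\frac{d\nu}{d\nu_\rho}(x)$ is a bounded operator for \emph{every} $x$ and $h(x)$ is a finite scalar, so $g(x) \in \mathcal B(\mathcal H)$ for every $x$; moreover $g$ is Borel measurable, being the product of the quantum random variable $\frac{d\nu}{d\nu_\rho}$ with the Borel scalar function $h$, scalar multiplication $\mathbb R \times \mathcal B(\mathcal H) \to \mathcal B(\mathcal H)$ being jointly continuous. Thus $g$ is a quantum random variable, and in the matrix-entry notation used above it is precisely $\frac{d\nu_\rho}{d\nu_\gamma}\frac{d\nu}{d\nu_\rho} = \sum_{i,j\geq 1}\bigl(h\,\frac{d\nu_{ij}}{d\nu_\rho}\bigr)\otimes e_{ij}$.

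Finally I would verify that $g$ computes the correct densities, i.e.\ that $\langle g(x)e_j,e_i\rangle = \frac{d\nu_{ij}}{d\nu_\gamma}(x)$ for $\nu_\gamma$-almost every $x$, which by definition says $g = \frac{d\nu}{d\nu_\gamma}$. For fixed $i,j$ and $E\in\mathcal O(X)$ we have $\langle g(x)e_j,e_i\rangle = h(x)\frac{d\nu_{ij}}{d\nu_\rho}(x)$, and the classical chain rule for Radon--Nikod\'ym derivatives gives
\[
\int_E h\,\frac{d\nu_{ij}}{d\nu_\rho}\, d\nu_\gamma = \int_E \frac{d\nu_{ij}}{d\nu_\rho}\, d\nu_\rho = \nu_{ij}(E),
\]
so $\langle g(\cdot)e_j,e_i\rangle$ is a version of $\frac{d\nu_{ij}}{d\nu_\gamma}$; uniqueness of the scalar Radon--Nikod\'ym derivative yields the claimed a.e.\ identity for each pair, and taking a single common $\nu_\gamma$-null exceptional set over the countably many pairs $(i,j)$ completes the argument.

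The only genuinely delicate points are bookkeeping: the chain-rule identity $\int_E \phi\, h\, d\nu_\gamma = \int_E \phi\, d\nu_\rho$ must be applied to the complex-valued integrand $\phi = \frac{d\nu_{ij}}{d\nu_\rho}$, which one handles by splitting into real and imaginary, then positive and negative, parts and invoking the monotone convergence theorem on each nonnegative piece (legitimate because $h \geq 0$); and one must choose the everywhere-finite representative of $h$ so that $g(x)$ is a bona fide bounded operator for every single $x$, not merely $\nu_\gamma$-almost everywhere, as the definition of ``$\frac{d\nu}{d\nu_\gamma}$ exists'' requires. Neither is a real obstacle; the corollary is essentially the statement that the scalar chain rule survives the passage to matrix entries.
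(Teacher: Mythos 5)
Your proposal is correct and follows essentially the same route as the paper, which likewise obtains $\frac{d\nu}{d\nu_\gamma}$ as the entrywise product $\sum_{i,j\geq 1}\bigl(\frac{d\nu_\rho}{d\nu_\gamma}\frac{d\nu_{ij}}{d\nu_\rho}\bigr)\otimes e_{ij}$ via mutual absolute continuity of $\nu$, $\nu_\rho$, and $\nu_\gamma$ and the scalar chain rule. The paper states this in a single line inside the proof of Theorem \ref{thm:nu-integral}; you supply the bookkeeping it omits (the everywhere-finite representative of $\frac{d\nu_\rho}{d\nu_\gamma}$, measurability of the product, and the common null set), which is exactly what is needed to justify the transfer of existence claimed in the corollary.
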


Unsurprisingly, we call the operator $\int_X f d\nu$ the {\em integral} of $f$ with respect to $\nu$. By the previous theorem we see that the integral is linear and takes positive quantum random variables to positive operators.

Analogous with classical measure theory we wish to consider the integral of characteristic functions. It is then necessary to find a suitable definition for a characteristic function. If we take the intuitive approach and define the characteristic functions to be $\chi_E I_{\mathcal H}$, where $\chi_E$ is the classical scalar-valued characteristic function for a measurable set $E$, we'd then expect that $\int_X \chi_E I_{\mathcal H}\, d\nu = \nu(E)$. To test this, consider the inner product:
\begin{align*}
\left\langle \int_X \chi_E I_{\mathcal{H}} \, d\nu\, e_j,e_i\right\rangle & = \tr(s_{i j}\chi_E I_{\mathcal{H}} ) \\
& = \int_X \left(\chi_E I_{\mathcal{H}}\right)_{s_{i j}}d\nu_\rho \\
& = \int_X \tr \left(s_{ij}\left(\frac{d\nu}{d\nu_\rho}\right)^{1/2} \chi_E I_{\mathcal H} \left(\frac{d\nu}{d\nu_\rho}\right)^{1/2}\right)d\nu_\rho \\
& = \int_E \left\langle \frac{d\nu}{d\nu_\rho} e_j, e_i\right\rangle d\nu_\rho = \int \frac{d\nu_{ij}}{d\nu_\rho} d\nu_\rho=\nu_{ij}(E). \\
\end{align*}
This shows that the characteristic functions (as we defined above) satisfy the integral formula; however, they do not span the non-commutative space $L^\infty_{\mathcal H}(X,\nu)$. This indicates that our original definition is needed in order to capture all functions of interest.

One can extend the definition of the integral to the more general setting of an OVM $\nu$ that is in the span of POVMs. Necessarily, $\nu$ needs to be bounded and, as will be seen, not every OVM can be described this way. So suppose $\nu = \nu_1 - \nu_2 + i\nu_3 - i\nu_4$ for POVMs $\nu_1,\nu_2,\nu_3,\nu_4$. Provided that $f: X \rightarrow \mathcal{B}(\mathcal H)$ is $\nu_i$-integrable for $i=1,2,3,4$ then define
\[
\int_X f d\nu = \int_X fd\nu_1 - \int_X fd\nu_2 + i\int_X fd\nu_3 - i\int_X fd\nu_4.
\]

In  \cite[Theorem 3.7]{FPS}, the authors show that for  two POVMs  $\nu,\omega:\mathcal{O}(X)\rightarrow \mathcal{B}(\mathcal H)$, for finite dimensional $\mathcal{H}$,   $\omega \ll_{\rm ac} \nu$ is equivalent to the existence of 
 a bounded quantum random variable $g:X\rightarrow\mathcal{B}(\mathcal H)$, unique up to sets of $\nu$-measure zero, such that
\begin{equation}\label{rn prop}
\omega(E)\,=\,\int_E g\,d\nu ,\;\mbox{ for every }E\in\mathcal{O}(X)\,;
\end{equation}
that is,  $\displaystyle g=\frac{d\omega}{d\nu}$, the Radon-Nikod\'ym derivative of $\omega$ with respect
to $\nu$. 

Unfortunately, for infinite dimensional $\mathcal{H}$, the function $g$ exists only sometimes---in which case it's determined by 
%
%Want $\frac{d\omega}{d\nu}$ such that
\[
\frac{d\omega_{ij}}{d\nu_\rho} = \left\langle \frac{d\omega}{d\nu}\left( \frac{d\nu}{d\nu_\rho}\right)^{1/2}e_j, \left(\frac{d\nu}{d\nu_\rho}\right)^{1/2}e_i \right\rangle = \left( \frac{d\omega}{d\nu}\right)_{s_{ij}}.
\]
A nice characterization of when $g$ exists (and when it does not) does not appear possible. For example:

\begin{example}
Let $\mu$ be Lebesgue measure on $[0,1]$. Let $\mu_1(E) = \mu(E \cap [0,1/2])$ and $\mu_2(E) = \mu(E \cap (1/2, 1])$ and define
\[
\nu = {\rm diag}\left(\mu, \frac{1}{2}\mu_1 + \frac{3}{2}\mu_2, \frac{1}{4}\mu_1 + \frac{7}{4}\mu_2, \dots\right);
\]
that is, $\nu_{ii} = \frac{1}{2^{i-1}}\mu_1 + \frac{2^{i}-1}{2^{i-1}}\mu_2$. So $\nu$ is a POVM with $\nu([0,1]) = I$ and so $\nu$ is  a quantum probability measure.

Let $\rho = {\rm diag}(1/2, 1/4, 1/8, \dots)$ and so
\begin{align*}
\nu_\rho = \sum_{i\geq 1} \frac{1}{2^i}\nu_{ii} & = \sum_{i\geq 1} \frac{1}{2^{2i-1}}\mu_1 + \frac{2^{i}-1}{2^{2i-1}}\mu_2
\\ & = \left(\frac{1}{2}\sum_{i\geq 1} \left(\frac{1}{4}\right)^{i-1}\right)(\mu_1 - \mu_2) + \sum_{i\geq 1} \frac{1}{2^{i-1}}\mu_2
\\ & = \frac{2}{3}\mu_1 + \frac{4}{3}\mu_2.
\end{align*}
Hence,
\[
\left(\frac{d\nu}{d\nu_\rho}\right)_{ii} = \frac{3}{2^i}\chi_{[0,1/2]} + \frac{3(2^i-1)}{2^{i+1}}\chi_{(1/2, 1]}.
\]
Thus, on $[0,1]$, $\frac{d\nu}{d\nu_\rho}$ is injective but not bounded below and so its inverse on its image does not exist.

Now, $\nu_\rho I_\mathcal H$ and $\nu$ are mutually absolutely continuous POVMs into $\mathcal B(\mathcal H)$. We have $\int_E \frac{d\nu}{d\nu_\rho} \nu_\rho I_\mathcal H = \nu(E)$ but if there were a quantum random variable $g: X \rightarrow \mathcal B(\mathcal H)$ such that $\int_E g d\nu = \nu_\rho(E)$ then $g$ restricted to the range of the   $ \frac{d\nu}{d\nu_\rho}$ would be equal to the generalized inverse $ \left(\frac{d\nu}{d\nu_\rho}\right)^{-1}$. Therefore, there is no Radon-Nikod\'ym derivative  $\frac{d\nu_\rho I_\mathcal H}{d\nu}$.
\end{example}

\section{Operator valued decomposition theorems}\label{sec:decomp}

\subsection{Structure of the positive operator-valued measures}\label{sec:POVM}

We state the classical result of Naimark here for completeness. 

\begin{theorem}[Naimark's dilation theorem \cite{Neumark}]
Let $\nu:\mathcal O(X) \rightarrow \mathcal B(\mathcal H)$ be a regular POVM. There exists a Hilbert space $\mathcal K$, a regular, projection-valued measure $\omega : \mathcal O(X) \rightarrow \mathcal B(\mathcal K)$ and a bounded operator $V : \mathcal K \rightarrow \mathcal H$ such that
\[
\nu(E) = V\omega(E)V^*, \ \ \ E\in \mathcal O(X).
\]
\end{theorem}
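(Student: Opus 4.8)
The plan is to construct the dilation by hand, in the Kolmogorov--GNS spirit, so that the argument stays entirely within the measure-theoretic framework of this paper. Let $\mathcal K_0$ be the complex vector space of all $\H$-valued simple functions $s=\sum_{i=1}^{n}\chi_{E_i}h_i$ with $E_i\in\O(X)$ and $h_i\in\H$, and define a sesquilinear form on $\mathcal K_0$ by
\[
\Big[\sum_i\chi_{E_i}h_i,\ \sum_j\chi_{F_j}k_j\Big]=\sum_{i,j}\big\langle\nu(E_i\cap F_j)h_i,k_j\big\rangle .
\]
A standard refinement-of-partitions argument, using finite additivity of $\nu$ (which follows from weak countable additivity) together with positivity, shows that this form is positive semidefinite; quotienting by its null space $N$ and completing yields a Hilbert space $\mathcal K$, on which the form becomes the inner product $\langle\cdot,\cdot\rangle$. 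On $\mathcal K_0$, multiplication by a characteristic function $m_E\big(\sum_i\chi_{E_i}h_i\big)=\sum_i\chi_{E\cap E_i}h_i$ satisfies $\big[m_Es,m_Es\big]\le[s,s]$ and $[m_Es,t]=[s,m_Et]$ (both visible after passing to a common disjoint refinement), so $m_E$ descends to $\mathcal K_0/N$ as a contractive symmetric operator and extends uniquely to an operator $\omega(E)$ on $\mathcal K$.

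I would then verify that $\omega$ is a PVM: the pointwise identities $\chi_E\chi_E=\chi_E$ and $\chi_{E_1}\chi_{E_2}=\chi_{E_1\cap E_2}$ give $\omega(E)=\omega(E)^2=\omega(E)^*$ and $\omega(E_1\cap E_2)=\omega(E_1)\omega(E_2)$, while weak countable additivity of $\omega$ follows from that of $\nu$ by testing the identity on the dense subspace $\mathcal K_0/N$ (where it reduces to countable additivity of the scalar measures $\langle\nu(\cdot)h,k\rangle$) and using that the projections $\omega(\cdot)$ are uniformly bounded. Next, define $V:\mathcal K\to\H$ to be the adjoint of the bounded map $\H\to\mathcal K$ sending $h$ to the class of $\chi_Xh$; boundedness is immediate since $\big\|\chi_Xh\big\|^2=\langle\nu(X)h,h\rangle\le\|\nu(X)\|\,\|h\|^2$ and $\nu$ is bounded. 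Then for every $E\in\O(X)$ and $h,k\in\H$,
\[
\langle V\omega(E)V^*h,k\rangle=\langle\omega(E)(\chi_Xh),\chi_Xk\rangle=\langle\chi_Eh,\chi_Xk\rangle=\langle\nu(E\cap X)h,k\rangle=\langle\nu(E)h,k\rangle ,
\]
which is exactly $\nu(E)=V\omega(E)V^*$.

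The main obstacle is the regularity of $\omega$, since the construction above is purely measure-theoretic whereas regularity is the one place the locally compact Hausdorff hypothesis on $X$ and the regularity hypothesis on $\nu$ really enter. To establish it, it suffices to show $\langle\omega(\cdot)\xi,\xi\rangle$ is regular for every $\xi\in\mathcal K$: a general trace-class operator on $\mathcal K$ is a trace-norm limit of finite-rank ones, trace-norm convergence of $\rho'$ forces total-variation convergence of $\tr(\rho'\omega(\cdot))$, and a total-variation limit of regular measures is regular. For $\xi=[\chi_Eh]$ one computes $\langle\omega(F)\xi,\xi\rangle=\langle\nu(F\cap E)h,h\rangle$, the restriction to $E$ of the regular measure $\tr(|h\rangle\langle h|\,\nu(\cdot))$, hence regular; for $\xi$ in the dense subspace $\mathcal K_0/N$ it is a finite sum of such measures, hence regular; and for general $\xi$ one approximates by $\xi_n\in\mathcal K_0/N$ and uses $\big\|\langle\omega(\cdot)\xi_n,\xi_n\rangle-\langle\omega(\cdot)\xi,\xi\rangle\big\|\le\|\xi_n-\xi\|\,(\|\xi_n\|+\|\xi\|)$ to pass to the limit. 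If one also wants the dilation minimal, one restricts $\omega$ and $V$ to the closed span of $\{\omega(E)V^*h:E\in\O(X),\,h\in\H\}$, but this is not needed for the statement.

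A quicker but less self-contained route is available: boundedness and regularity of $\nu$ make integration of scalar functions against $\nu$ a bounded positive map $\Phi:C_0(X)\to\B(\H)$, which is automatically completely positive since $C_0(X)$ is commutative; Stinespring dilation gives $\Phi(f)=W^*\pi(f)W$ for a $*$-representation $\pi$ of $C_0(X)$, and the spectral theorem for representations of commutative $C^*$-algebras writes $\pi(f)=\int_X f\,d\omega$ for a regular PVM $\omega$; unwinding $\langle\nu(E)h,k\rangle=\int_X\chi_E\,d\langle\omega(\cdot)Wh,Wk\rangle$ gives $\nu(E)=W^*\omega(E)W$, and one sets $V=W^*$. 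Either way, the only genuinely delicate points are the passage from $C_0(X)$ (or from simple functions) to all bounded Borel functions and the bookkeeping needed to guarantee that the resulting spectral measure is regular.
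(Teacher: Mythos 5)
Your proposal is correct, but there is nothing in the paper to compare it against: the authors explicitly state Naimark's theorem ``for completeness'' and cite Neumark's 1943 paper rather than proving it, so any proof you give is necessarily your own. What you have written is essentially the standard argument: your first route is the Kolmogorov-decomposition construction (positive semidefinite form on $\mathcal H$-valued simple functions, quotient and complete, multiplication operators give the spectral measure), and your second is the Stinespring route via the positive map $f\mapsto\int f\,d\nu$ on $C_0(X)$; both are sound, and your treatment of the one genuinely delicate point --- regularity of the dilated PVM, via total-variation limits of the restricted measures $\langle\nu(\cdot\cap E)h,h\rangle$ and the orthogonality estimate for projections --- is correct and is exactly where the regularity and local compactness hypotheses enter. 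Two routine points you gloss over and should at least mention: the form $[\cdot,\cdot]$ must be checked to depend only on the simple function and not on its representation as $\sum_i\chi_{E_i}h_i$ (this is finite additivity of $\nu$, used before you can even speak of positivity after refinement); and in the Stinespring route $C_0(X)$ is non-unital when $X$ is non-compact, so you must either pass to the unitization or use an approximate identity to get $\pi$ and $W$ with $W^*W=\Phi(1)=\nu(X)$. Note also that the paper's Theorem 3.2 (the non-dilation factorization $\nu(E)=\nu(X)^{1/2}\omega(E)\nu(X)^{1/2}$ via Douglas' lemma) is a separate result and not a substitute for, nor a proof of, the Naimark statement.
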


Despite not involving a dilation, the following theorem is reminiscent of Naimark's dilation theorem. 

\begin{theorem}\label{thm:douglas}
Let $\nu : \mathcal O(X) \rightarrow \mathcal B(\mathcal H)$ be a POVM. There exists a quantum probability measure $\omega : \mathcal O(X) \rightarrow \mathcal B(\mathcal H)$ such that
\[
\nu(E) = \nu(X)^{1/2}\omega(E)\nu(X)^{1/2}, \ \ \ E\in\mathcal O(X).
\]
Moreover, $\omega$ is unique on the range of $\nu(X)$.
\end{theorem}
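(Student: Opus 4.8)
The plan is to build $\omega$ by "dividing out" $\nu(X)^{1/2}$ from both sides, taking care over the kernel of $\nu(X)$. Write $P := \nu(X)$, a positive contraction (up to scaling we may think of $\|\nu(X)\| \le 1$, but this is not essential), and let $Q$ denote the orthogonal projection onto $\overline{\mathrm{ran}\, P} = (\ker P)^\perp$. For each $E \in \mathcal O(X)$, the first step is to show that $\nu(E)$ vanishes on $\ker P$ and maps into $\overline{\mathrm{ran}\,P}$: since $0 \le \nu(E) \le \nu(X) = P$, for any $\xi \in \ker P$ we have $\langle \nu(E)\xi,\xi\rangle \le \langle P\xi,\xi\rangle = 0$, so $\nu(E)^{1/2}\xi = 0$, hence $\nu(E)\xi = 0$; self-adjointness then gives $\mathrm{ran}\,\nu(E) \subseteq \overline{\mathrm{ran}\,P}$. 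Consequently $\nu(E) = Q\nu(E)Q$ for all $E$.

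Next I would define $\omega$ on $\overline{\mathrm{ran}\,P}$ using the generalized (Moore--Penrose type) inverse. Let $P^{-1/2}$ denote the densely defined, possibly unbounded, positive inverse of $P^{1/2}$ on $\overline{\mathrm{ran}\,P}$. The key estimate is that $P^{-1/2}\nu(E)P^{-1/2}$ extends to a bounded operator on $\overline{\mathrm{ran}\,P}$ with norm at most $1$: this follows from the Douglas factorization lemma, since $0 \le \nu(E) \le P = P^{1/2}P^{1/2}$ forces $\nu(E)^{1/2} = P^{1/2}C_E$ for a contraction $C_E$ (supported on $\overline{\mathrm{ran}\,P}$), whence $\nu(E) = P^{1/2}(C_EC_E^*)P^{1/2}$ and $C_EC_E^* \le I$. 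Set $\omega(E) := C_EC_E^*$ on $\overline{\mathrm{ran}\,P}$ and $\omega(E) := 0$ (or, to get $\omega(X)=I_{\mathcal H}$ when $P$ is not injective, the identity) on $\ker P$; more precisely put $\omega(E) = Q\,\omega(E)\,Q + \chi\{E = X\text{-like piece}\}$ — cleanest is to define $\omega(E)$ on $\overline{\mathrm{ran}\,P}$ as above and extend by $\omega(E)|_{\ker P} = \nu_{\rho_0}(E)\,I_{\ker P}$ for some fixed auxiliary probability measure, e.g. $\nu_\rho/\nu_\rho(X)$, so that $\omega$ remains a quantum probability measure; on the range of $\nu(X)$ this choice is irrelevant and the factorization identity $\nu(E) = P^{1/2}\omega(E)P^{1/2}$ holds by construction since $P^{1/2}$ kills $\ker P$.

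It then remains to check that $\omega$ is genuinely a quantum probability measure: positivity of each $\omega(E) = C_EC_E^*$ is immediate; $\omega(X) = I_{\mathcal H}$ follows because on $\overline{\mathrm{ran}\,P}$ the factorization $\nu(X) = P = P^{1/2}\,\omega(X)\,P^{1/2}$ combined with $P^{1/2}$ having dense range forces $\omega(X) = I$ there, and the extension was chosen to be $I$ on $\ker P$; weak countable additivity of $\omega$ on $\overline{\mathrm{ran}\,P}$ is transported from that of $\nu$ by conjugating with the (unbounded but closed) $P^{-1/2}$ — concretely, for $\xi,\eta \in \mathrm{ran}\,P^{1/2}$ write $\xi = P^{1/2}\xi'$, $\eta = P^{1/2}\eta'$ and use $\langle \omega(E)\xi,\eta\rangle = \langle \nu(E)\xi',\eta'\rangle$, which is countably additive, then extend to all of $\overline{\mathrm{ran}\,P}$ by the uniform bound $\|\omega(E)\| \le 1$ and a density/$3\varepsilon$ argument. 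Uniqueness on $\mathrm{ran}\,\nu(X)$: if $\omega'$ also works, then $P^{1/2}(\omega(E) - \omega'(E))P^{1/2} = 0$, and since $P^{1/2}$ is injective on $\overline{\mathrm{ran}\,P}$ with dense range, $\omega(E) = \omega'(E)$ as quadratic forms on $\mathrm{ran}\,P^{1/2}$, hence on $\overline{\mathrm{ran}\,\nu(X)}$.

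The main obstacle I anticipate is the weak countable additivity of $\omega$: the conjugation $\nu(E) \mapsto P^{-1/2}\nu(E)P^{-1/2}$ only makes literal sense on the dense but non-closed subspace $\mathrm{ran}\,P^{1/2}$, so one must argue that the uniformly bounded operators $\omega(E_k)$ sum ultraweakly by first verifying the identity on a dense set of vectors and then invoking the uniform norm bound to pass to the closure; care is also needed that the "correction term" on $\ker P$ is itself a countably additive probability measure (which it is, being a scalar multiple of $\nu_\rho$), so that the full $\omega$ is additive. Everything else — the Douglas-type estimate, positivity, $\omega(X) = I$, and uniqueness — is routine once the kernel bookkeeping is set up.
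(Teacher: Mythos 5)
Your proposal is correct and follows essentially the same route as the paper: both invoke Douglas' lemma to factor $\nu(E)^{1/2}=\nu(X)^{1/2}C_E$, set $\omega(E)=C_EC_E^*$ on $\overline{{\rm ran}\,\nu(X)}$, and patch the kernel of $\nu(X)$ with an auxiliary scalar probability measure to make $\omega$ unital, with uniqueness on ${\rm ran}\,\nu(X)$ obtained exactly as you describe. The only difference is cosmetic: the paper deduces countable additivity of $\omega$ from the uniqueness of $C_EC_E^*$ in the factored identity, whereas you verify it on the dense subspace ${\rm ran}\,\nu(X)^{1/2}$ and pass to the closure via the uniform bound, which amounts to the same thing.
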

\begin{proof}
 Note that $\nu(X)$ is not necessarily invertible (otherwise, we could trivially define $\omega(E)= \nu(X)^{-1/2}\nu(E)\nu(X)^{-1/2}$). For every $E\in \mathcal O(X)$ we have that $\nu(E) \leq \nu(X)$. By Douglas' Lemma \cite{DouglasLem}, there exists a unique $C_E\in \mathcal B(\mathcal H)$ such that
\begin{align*}
\bullet \ \   & \nu(E)^{1/2} = \nu(X)^{1/2}C_E, 
\\ \bullet  \  \ &  \ker C_E = \ker \nu(E)^{1/2} = \ker \nu(E) 
\\ \bullet  \ \ &  {\rm ran}\ C_E \subseteq \overline{{\rm ran}\ \nu(X)^{1/2}} = \overline{{\rm ran}\ \nu(X)}
\end{align*}
By similar reasoning to Douglas' proof, $C_EC_E^*$ is uniquely defined since $\nu(X)$ is bijective on ${\rm ran}\ \nu(X)$.

Thus, define $\omega_1(E) = C_EC_E^*, E\in \mathcal O(X)$ to get that
\[
\nu(X)^{1/2}\omega_1(E)\nu(X)^{1/2} = \nu(X)^{1/2}C_EC_E^*\nu(X)^{1/2} = \nu(E).
\]
It is immediate that $\omega_1(E)$ is positive for all $E\in \mathcal O(X)$ and $\omega_1(X) = P_{{\rm ran}\ \nu(X)}$, the projection onto the range space of $\nu(X)$.
Lastly, suppose $\{E_i\}_{i \in I}$ is a finite or countable set of disjoint measurable subsets. 
We know that 
\begin{align*}
\sum_{i\in I} \nu(X)^{1/2}\omega_1(E_i)\nu(X)^{1/2} &= \sum_{i\in I} \nu(E_i) 
\\ & = \nu(\cup_{i\in I} E_i) 
\\ & = \nu(X)^{1/2}\omega_1(\cup_{i\in I} E_i)\nu(X)^{1/2}
\end{align*}
where the sum converges in the ultraweak topology. Hence,
\[
\sum_{i\in I} \omega_1(E_i) = \omega_1(\cup_{i\in I} E_i)
\]
where the sum converges in the ultraweak topology by the uniqueness of the operator $C_EC_E^*.$ Therefore, $\omega_1$ is a POVM.

Let $\mu$ be any probability measure on $X$. Define for $E\in \mathcal O(X)$
\[
\omega_2(E) = \mu(E)(I_\mathcal H - \omega_1(X)) = \mu(E)P_{\ker \nu(X)}.
\]
Therefore, $\omega = \omega_1 + \omega_2$ is a quantum probability measure such that 
\[
\nu(E) = \nu(X)^{1/2}\omega(E)\nu(X)^{1/2}, \ \ \ E\in \mathcal O(X).
\]
\end{proof}

Recall that a C$^*$-convex combination of functions $f_i : Y \rightarrow \mathcal B(\mathcal H), 1\leq i\leq k$ is 
\[
\sum_{i=1}^k A_i^*fA_i \ \ \ \textrm{where} \ \ \ \sum_{i=1}^n A_i^*A_i = I_\mathcal H.
\]

\begin{corollary}\label{cor:cstarconvex}
Let $\nu : \mathcal O(X) \rightarrow \mathcal B(\mathcal H)$ be a quantum probability measure such that $\nu = \nu_1 + \cdots + \nu_n$ for POVMs $\nu_i : \mathcal O(X) \rightarrow \mathcal B(\mathcal H)$. There exists quantum probability measures $\gamma_i : \mathcal O(X) \rightarrow \mathcal B(\mathcal H)$ such that
\[
\nu(E) = \sum_{i=1}^k \nu_i(X)^{1/2}\gamma_i(E)\nu_i(X)^{1/2}, \ \ E\in \mathcal O(X), 
\]
that is, every decomposition of a quantum probability measure can be realized  as a C$^*$-convex combination of quantum probability measures.
\end{corollary}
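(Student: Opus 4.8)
The plan is to apply Theorem~\ref{thm:douglas} to each summand $\nu_i$ separately and then recognize the coefficients that appear as the data of a C$^*$-convex combination.

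First I would, for each $i\in\{1,\dots,n\}$, invoke Theorem~\ref{thm:douglas} applied to the POVM $\nu_i$: this produces a quantum probability measure $\gamma_i:\mathcal O(X)\to\mathcal B(\mathcal H)$, unique on the range of $\nu_i(X)$, with
\[
\nu_i(E)=\nu_i(X)^{1/2}\gamma_i(E)\nu_i(X)^{1/2},\qquad E\in\mathcal O(X).
\]
Summing these $n$ identities and using the hypothesis $\nu=\nu_1+\cdots+\nu_n$ gives
\[
\nu(E)=\sum_{i=1}^n\nu_i(X)^{1/2}\gamma_i(E)\nu_i(X)^{1/2},\qquad E\in\mathcal O(X),
\]
which is the asserted formula (with the index $k$ read as $n$).

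It then remains to verify that this expression really is a C$^*$-convex combination of the $\gamma_i$, i.e.\ that the operators $A_i:=\nu_i(X)^{1/2}$ satisfy $\sum_{i=1}^n A_i^*A_i=I_\mathcal H$. Since each $\nu_i(X)$ is positive, $A_i$ is self-adjoint and $A_i^*A_i=\nu_i(X)$; evaluating the hypothesis $\nu=\sum_i\nu_i$ at $E=X$ and using that $\nu$ is a quantum probability measure gives $\sum_{i=1}^n\nu_i(X)=\nu(X)=I_\mathcal H$, as required.

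There is no real obstacle here: the statement is a direct consequence of Theorem~\ref{thm:douglas} applied termwise, together with the normalization $\nu(X)=I_\mathcal H$. The only points worth flagging are bookkeeping --- reconciling the index $k$ with $n$ --- and the (standard) observation that the definition of a C$^*$-convex combination permits the functions being combined, here the $\gamma_i$, to vary with the index.
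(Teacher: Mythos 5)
Your proposal is correct and is exactly the argument the paper intends: the corollary is stated without proof as an immediate consequence of Theorem~\ref{thm:douglas}, obtained by applying that theorem to each $\nu_i$, summing, and using $\sum_i \nu_i(X)=\nu(X)=I_{\mathcal H}$ to see that the operators $\nu_i(X)^{1/2}$ form the coefficients of a C$^*$-convex combination.
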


\subsection{Hahn-Jordan decomposition}\label{sec:HJ}

\begin{theorem}[Hahn Decomposition]\cite[Theorem  29.A]{Halmos}\label{thm:ClassHahn}
Let $\mu$ be a signed measure on $\Sigma$. Then there exists disjoint sets $S,T\in\Sigma$ such that $S\cup T=X$,  $\mu(A)\geq 0$ for all $A\in \Sigma$ with $A\subseteq S$, and $\mu(B)\leq 0$ for all $B\in \Sigma$ with $B\subseteq T$.
\end{theorem}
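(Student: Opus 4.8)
The plan is to run the classical argument built around \emph{positive} and \emph{negative} sets. First I would make a harmless normalization: since a signed measure omits at least one of the values $\pm\infty$, I may assume $\mu$ never takes the value $+\infty$ (otherwise apply the result to $-\mu$ and interchange $S$ and $T$). Call $A\in\Sigma$ \emph{positive} if $\mu(B)\ge 0$ for every measurable $B\subseteq A$ and \emph{negative} if $\mu(B)\le 0$ for every measurable $B\subseteq A$; note $\emptyset$ is both.

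The engine of the proof is the following lemma, which I would establish first: every $E\in\Sigma$ with $0\le\mu(E)<\infty$ contains a positive set $A$ with $\mu(A)\ge\mu(E)$. One constructs $A$ by greedily discarding ``very negative'' pieces: at stage $k$, let $n_k$ be the least positive integer for which the not-yet-discarded part of $E$ contains a measurable subset $E_k$ with $\mu(E_k)<-1/n_k$, and set $A=E\setminus\bigcup_k E_k$. Then $\mu(A)\ge\mu(E)$ since each discarded piece has nonpositive measure, and $\mu(A)$ is finite (it cannot equal $+\infty$, and it is bounded below by $\mu(E)$); hence $\sum_k\mu(E_k)$ converges, which forces $n_k\to\infty$. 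This last fact is exactly what makes $A$ positive: a measurable $B\subseteq A$ with $\mu(B)<-1/n$ would, at every stage $k$ with $n_k>n$, have been an available subset witnessing a smaller admissible index than $n_k$, contradicting the minimality in the choice of $n_k$.

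With the lemma available, I would set $\beta=\sup\{\mu(A): A\text{ positive}\}\ge 0$, choose positive sets $A_n$ with $\mu(A_n)\to\beta$, and put $S=\bigcup_n A_n$. A routine check shows a countable union of positive sets is positive (decompose it into disjoint pieces, each contained in some $A_n$), and since $S\setminus A_n\subseteq S$ is positive we get $\mu(S)\ge\mu(A_n)$ for all $n$, so $\mu(S)=\beta$; in particular $\beta<\infty$ because $\mu$ omits $+\infty$. Finally let $T=X\setminus S$. If $T$ were not negative, there would be $E\subseteq T$ with $0<\mu(E)<\infty$, and the lemma would produce a positive $A\subseteq E$ with $\mu(A)\ge\mu(E)>0$; then $S\cup A$ would be positive with $\mu(S\cup A)=\beta+\mu(A)>\beta$, contradicting the choice of $\beta$. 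Hence $S$ is positive, $T$ is negative, $S\cup T=X$ and $S\cap T=\emptyset$, which is the asserted decomposition.

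I expect the only genuine obstacle to be the lemma, and within it the verification that the greedily built $A$ is positive: this rests on the convergence of $\sum_k\mu(E_k)$ — valid precisely because $\mu$ omits $+\infty$ — together with careful bookkeeping of the minimality of the indices $n_k$. Everything downstream of the lemma is routine manipulation of suprema and disjointification.
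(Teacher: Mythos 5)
Your proposal is a correct and complete proof, but note that the paper does not prove this statement at all: Theorem \ref{thm:ClassHahn} is imported verbatim from Halmos \cite[Theorem 29.A]{Halmos} as classical background for the operator-valued discussion in Section \ref{sec:HJ}, so there is no in-paper argument to compare against. What you have written is the standard exhaustion proof (normalize so that $\mu$ omits $+\infty$, extract a positive subset of any set of finite nonnegative measure by greedily deleting pieces of measure below $-1/n_k$ with $n_k$ minimal, then take $S$ realizing the supremum of $\mu$ over positive sets and $T=X\setminus S$), and all the delicate points --- convergence of $\sum_k\mu(E_k)$ forcing $n_k\to\infty$, the minimality argument showing $A$ is positive, positivity of countable unions of positive sets, and finiteness of $\beta$ --- are handled correctly. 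The only cosmetic omission is the trivial case in which the greedy deletion terminates after finitely many stages because no subset of the remainder has negative measure; there $A$ is positive by definition and the lemma holds vacuously.
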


For any two measures $\mu$ and $\lambda$ defined on $\Sigma$, we say $\mu$ and $\lambda$ are \emph{singular} (denoted $\mu \perp \lambda$) if there exists disjoint sets $A,B\in\Sigma$, where $A\cup B=X$, such that $\mu$ is zero for all measurable subsets of B and $\lambda$ is zero for all measurable subsets of A. Note that this relation is clearly symmetric (i.e. if $\mu\perp\lambda$ then $\lambda\perp\mu$).

\begin{theorem}[Jordan Decomposition]\cite[Theorem 19.6]{Nielsen}\label{thm:ClassJordan}
Let $\mu$ be a signed measure on $\Sigma$. Then there exists unique positive measures $\mu^+$ and $\mu^-$ such that $\mu=\mu^+-\mu^-$, with the property that at least one of $\mu^+$ and $\mu^-$ is finite and $\mu^+ \perp \mu^-$.
\end{theorem}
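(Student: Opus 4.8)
The plan is to deduce the operator-valued Jordan-type statement — here, the classical Theorem~\ref{thm:ClassJordan} — from the Hahn Decomposition (Theorem~\ref{thm:ClassHahn}) in the standard way, since the excerpt ends exactly at the statement of the classical Jordan decomposition and the author has just recalled the Hahn decomposition. First I would invoke Theorem~\ref{thm:ClassHahn} to obtain a Hahn decomposition $X = S \cup T$ with $S \cap T = \emptyset$, where $S$ is a positive set and $T$ a negative set for $\mu$. Then I would define
\[
\mu^+(E) = \mu(E \cap S) \qquad \text{and} \qquad \mu^-(E) = -\mu(E \cap T), \qquad E \in \Sigma.
\]
Both are nonnegative set functions by the defining property of $S$ and $T$, and they are countably additive because $\mu$ is; so they are positive measures. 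Clearly $\mu = \mu^+ - \mu^-$ since $E = (E\cap S) \cup (E \cap T)$ is a disjoint union. Singularity $\mu^+ \perp \mu^-$ is witnessed by the pair $(T, S)$: $\mu^+$ vanishes on measurable subsets of $T$ and $\mu^-$ vanishes on measurable subsets of $S$. Finiteness of at least one of the two follows from the convention that a signed measure assumes at most one of the values $\pm\infty$: if $\mu$ never takes the value $+\infty$ then $\mu^+$ is finite, and otherwise $\mu$ never takes $-\infty$ so $\mu^-$ is finite.

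For uniqueness, I would suppose $\mu = \lambda^+ - \lambda^-$ is another such decomposition with $\lambda^+ \perp \lambda^-$, supported on a partition $A \cup B = X$ with $\lambda^+$ concentrated on $A$ and $\lambda^-$ on $B$. The key observation is that $A$ is then (up to a $\mu$-null set) a positive set and $B$ a negative set for $\mu$, so $A \triangle S$ and $B \triangle T$ are null for both decompositions; a short computation using additivity on $E \cap A$, $E \cap B$, $E \cap S$, $E \cap T$ then gives $\lambda^\pm = \mu^\pm$. One must be a little careful here to subtract measures only where the difference is well-defined, which is exactly where the finiteness clause is used — but this is routine.

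The main obstacle, to the extent there is one, is bookkeeping rather than substance: one must handle the extended-real-valued arithmetic carefully so that no $\infty - \infty$ arises when verifying $\mu = \mu^+ - \mu^-$ and when arguing uniqueness, and one must check countable additivity of $\mu^+$ and $\mu^-$ directly from that of $\mu$ restricted to the measurable sets $S$ and $T$. Since this is the classical Jordan decomposition stated verbatim from \cite{Nielsen}, I would expect the paper simply to cite it; the genuinely new, operator-valued Hahn--Jordan material (the content alluded to in Section~\ref{sec:HJ}) comes afterward and is where the real work lies.
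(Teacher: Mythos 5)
Your proposal is correct and matches what the paper does: the paper cites the result from \cite{Nielsen} and, in the discussion immediately following, sketches exactly your construction, namely taking a Hahn decomposition $X = S \cup T$ and setting $\mu^+(E) = \mu(E \cap S)$ and $\mu^-(E) = -\mu(E \cap T)$. Your additional care about singularity, the finiteness clause, and uniqueness fills in the routine details the paper defers to \cite[Section 29]{Halmos}.
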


Theorems \ref{thm:ClassHahn} and \ref{thm:ClassJordan} are closely related in that the Jordan decomposition of $\mu$ follows as a consequence of the Hahn decomposition of $X$. To see this, consider decomposing $X$ into two sets $S$ and $T$ according to Theorem \ref{thm:ClassHahn}. We can then construct the measures $\mu_1(E)=\mu(E\cap S)$ and $\mu_2(E)=-\mu(E\cap T)$. These measures end up satisfying all the properties described in Theorem \ref{thm:ClassJordan}, and so, by uniqueness, we have $\mu^+(E)=\mu_1(E)$ and $\mu^-(E)=\mu_2(E)$; thereby allowing for a constructive proof of Theorem \ref{thm:ClassJordan}. See %\cite{Fischer} for more details in the case where $\mu$ is finite, or 
\cite[Section 29]{Halmos} for more details. %the more general case. 
An interesting consequence of the uniqueness of the Jordan decomposition (and the non-uniqueness of the Hahn decomposition) means that if $S'$ and $T'$ are another Hahn decomposition of $X$, then we must have $\mu(E\cap S)=\mu(E\cap S')$ and $\mu(E\cap T)=\mu(E\cap T')$ for all $E\in\Sigma$.

The measures $\mu^+$ and $\mu^-$ from Theorem \ref{thm:ClassJordan} are given the names \emph{upper variation} and \emph{lower variation} of $\mu$, respectively. These measures are then used to define the measure $|\mu|$, called the \emph{total variation} of $\mu$, where $|\mu|(E)=\mu^+(E)+\mu^-(E)$ and is defined for all $E\in\Sigma$.

It is an immediate observation that there can be no Hahn decomposition beyond the dimension 1 case since a self-adjoint operator in general is not either positive or negative but a mixture of the two. However, one can still show that under some conditions an OVM is the linear combination of four POVMs. This is what the higher-dimensional analogue of the Hahn-Jordan theorem becomes.

Every OVM $\nu$ is easily seen to be the linear combination of two self-adjoint OVMs 
\[
\nu(E) =  \frac{\nu(E) + \nu(E)^*}{2} + i\left(\frac{\nu(E) - \nu(E)^*}{2i}\right), \ E\in \mathcal O(X).
\]
One would perhaps like to attempt to decompose a self-adjoint, bounded OVM $\nu: \mathcal O(X) \rightarrow \mathcal{B}(\mathcal H)$ as $\nu = \nu_+ - \nu_-$ where these POVMs are defined by $\nu_+(E) = \nu(E)_+$ and $\nu_-(E) = \nu(E)_-$ for all $E\in \mathcal O(X)$; that is, just decompose each self-adjoint operator $\nu(E)$ into its positive and negative parts. This would have the advantage of recovering something akin to the Hahn decomposition since $\nu_+(E)\nu_-(E) = 0$, a form of singularity. However, this approach proves to be too na\"{i}ve. 

Hadwin \cite{Hadwin} shows that there is a bijective correspondence between regular, bounded OVMs $\nu : \mathcal O(X) \rightarrow \mathcal{B}(\mathcal H)$ and bounded linear maps $\phi_\nu : C(X) \rightarrow \mathcal{B}(\mathcal H)$ by way of the equations
\[
\langle \phi_\nu(f)x,y\rangle = \int f d\nu_{x,y}, \ \ \forall x,y\in \mathcal H, \forall f\in C(X),
\]
where $\nu_{x,y}(E) = \langle \nu(E)x,y\rangle$ for $x,y\in \mathcal H$ is a complex regular measure.

A regular, bounded OVM $\nu$ will be called {\em completely bounded} if $\phi_\nu$ is completely bounded.

\begin{theorem}[Hadwin, {\cite[Theorem 20]{Hadwin}}]
Let $\nu: \mathcal O(X) \rightarrow \mathcal{B}(\mathcal H)$ be a regular, bounded OVM. Then $\nu$ is the linear combination of four regular POVMs if and only if $\nu$ is completely bounded.
\end{theorem}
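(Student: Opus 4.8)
The plan is to push everything through Hadwin's correspondence $\nu \leftrightarrow \phi_\nu$ and reduce the problem to the Wittstock decomposition of completely bounded maps into $\mathcal{B}(\mathcal{H})$. The elementary fact that makes this work is that, under this correspondence, the regular POVMs are exactly the \emph{positive} bounded linear maps $C(X) \to \mathcal{B}(\mathcal{H})$: for each $x \in \mathcal{H}$ the complex regular measure $\nu_{x,x}$ satisfies $\nu_{x,x}(E) = \langle \nu(E)x,x\rangle$ and $\int f\, d\nu_{x,x} = \langle \phi_\nu(f)x,x\rangle$, so $\nu$ is positive iff every $\nu_{x,x}$ is a positive measure iff $\langle \phi_\nu(f)x,x\rangle \ge 0$ for all $f \ge 0$ and all $x \in \mathcal{H}$, i.e. iff $\phi_\nu$ is positive. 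Since the domain $C(X)$ is commutative, a positive map $C(X) \to \mathcal{B}(\mathcal{H})$ is automatically completely positive (Stinespring), and in particular completely bounded.

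For the implication ``$\nu$ a linear combination of four regular POVMs $\Rightarrow$ $\nu$ completely bounded'': if $\nu = \nu_1 - \nu_2 + i\nu_3 - i\nu_4$ with each $\nu_j$ a regular POVM then, since $\nu \mapsto \phi_\nu$ is linear (immediate from $\langle \phi_\nu(f)x,y\rangle = \int f\, d\nu_{x,y}$), we get $\phi_\nu = \phi_{\nu_1} - \phi_{\nu_2} + i\phi_{\nu_3} - i\phi_{\nu_4}$, a finite linear combination of the completely positive maps $\phi_{\nu_j}$, hence completely bounded; thus $\nu$ is completely bounded by definition.

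For the converse, assume $\phi_\nu$ is completely bounded. First split it into self-adjoint pieces: write $\phi_\nu = \psi_1 + i\psi_2$ with $\psi_1(f) = \tfrac{1}{2}\big(\phi_\nu(f) + \phi_\nu(\bar f)^*\big)$ and $\psi_2(f) = \tfrac{1}{2i}\big(\phi_\nu(f) - \phi_\nu(\bar f)^*\big)$. The map $f \mapsto \phi_\nu(\bar f)^*$ is completely bounded with the same completely bounded norm as $\phi_\nu$, so each $\psi_k$ is a completely bounded map satisfying $\psi_k(\bar f) = \psi_k(f)^*$. Now apply Wittstock's decomposition theorem for self-adjoint completely bounded maps into $\mathcal{B}(\mathcal{H})$ (see, e.g., \cite{Paulsen}) to obtain completely positive maps $\theta_1,\dots,\theta_4 : C(X) \to \mathcal{B}(\mathcal{H})$ with $\psi_1 = \theta_1 - \theta_2$ and $\psi_2 = \theta_3 - \theta_4$. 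Each $\theta_j$ is a bounded linear map, so Hadwin's bijection furnishes a unique regular bounded OVM $\nu_j$ with $\phi_{\nu_j} = \theta_j$; running the positivity observation of the first paragraph in reverse, positivity of $\theta_j$ forces $\nu_j$ to be a POVM. Finally, linearity and injectivity of the correspondence give $\phi_\nu = \phi_{\nu_1} - \phi_{\nu_2} + i\phi_{\nu_3} - i\phi_{\nu_4} = \phi_{\nu_1 - \nu_2 + i\nu_3 - i\nu_4}$, hence $\nu = \nu_1 - \nu_2 + i\nu_3 - i\nu_4$, a linear combination of four regular POVMs.

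The substantive ingredient, and the one place the argument is not bookkeeping, is the converse: it rests entirely on Wittstock's theorem that a self-adjoint completely bounded map into $\mathcal{B}(\mathcal{H})$ is a difference of two completely positive maps. This is also exactly the place where ``bounded'' alone is insufficient: there are bounded linear maps $C(X) \to \mathcal{B}(\mathcal{H})$ that fail to be completely bounded, and for the OVMs corresponding to them no decomposition into POVMs can exist --- consistent with the finite-dimensional picture, where every bounded map into $M_n$ is automatically completely bounded and the four-POVM decomposition always succeeds.
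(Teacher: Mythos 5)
Your argument is correct and follows exactly the route the paper indicates for this result: it does not give a proof but cites Hadwin and the outline in Chapter 8 of \cite{Paulsen}, noting that the proof uses Wittstock's decomposition theorem together with the fact that positive maps on $C(X)$ are completely positive. Your write-up simply fills in the details of that same outline (the correspondence $\nu \leftrightarrow \phi_\nu$, positivity of $\nu$ matching positivity of $\phi_\nu$ via regularity, and Wittstock applied to the self-adjoint parts), so there is nothing to add.
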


The proof is nicely outlined in Chapter 8 of \cite{Paulsen} and uses Wittstock's decomposition theorem for completely bounded maps and Stinespring's theorem that every positive linear map $C(X) \rightarrow \mathcal{B}(\mathcal H)$ is completely positive and thus completely bounded.

A major consequence of this theorem is that there are OVMs which cannot be written as linear combinations of POVMs. Hadwin additionally gives an example of a completely bounded OVM which does not have finite total variation. Paulsen \cite[Chapter 8]{Paulsen} shows that this gives an example of a completely bounded, regular, self-adjoint OVM $\nu$ where $\nu_+$ and $\nu_-$ do not define POVMs but by the previous theorem this $\nu$ still can be written as the linear combination of two POVMs.

\subsection{Lebesgue decomposition}
In classical measure theory, we have the following result. 
\begin{theorem}[Lebesgue Decomposition]\cite[Theorem 15.14]{Nielsen}\label{thm:ClassLebesgue} Let $\mu$ and $\lambda$ be measures on $\Sigma$ with $\lambda$ being $\sigma$-finite. Then there exists unique measures $\lambda_a$ and $\lambda_s$ such that $\lambda=\lambda_a  + \lambda_s$, $\lambda_a \ll_{ac} \mu$ and $\lambda_s \perp \mu$.
\end{theorem}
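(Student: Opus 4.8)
This statement is the classical Lebesgue decomposition theorem, so the plan is to recall the standard argument. The first step is to reduce to the case in which $\lambda$ is finite. Using $\sigma$-finiteness of $\lambda$, write $X$ as a countable disjoint union $X=\bigsqcup_n X_n$ of measurable sets with $\lambda(X_n)<\infty$, carry out the decomposition on each $X_n$ separately (working with $\lambda(\,\cdot\,\cap X_n)$ and $\mu(\,\cdot\,\cap X_n)$, only the former needing to be finite), and then set $\lambda_a(E)=\sum_n \lambda_a^{(n)}(E)$ and similarly for $\lambda_s$. Countable additivity of $\lambda_a,\lambda_s$ is immediate, and the two defining properties survive the gluing because a countable union of $\mu$-null sets is $\mu$-null; so from now on assume $\lambda(X)<\infty$.

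For the finite case I would use the maximal-null-set construction. Put $s=\sup\{\lambda(N): N\in\Sigma,\ \mu(N)=0\}$, which is at most $\lambda(X)<\infty$, choose $N_k\in\Sigma$ with $\mu(N_k)=0$ and $\lambda(N_k)\to s$, and let $N_0=\bigcup_k N_k$. Then $\mu(N_0)=0$ and, since $\lambda(N_0)\ge\lambda(N_k)$ for all $k$ while $\lambda(N_0)\le s$, in fact $\lambda(N_0)=s$. Define $\lambda_s(E)=\lambda(E\cap N_0)$ and $\lambda_a(E)=\lambda(E\setminus N_0)$; clearly $\lambda=\lambda_a+\lambda_s$, and $\lambda_s\perp\mu$ since $\mu(N_0)=0$ and $\lambda_s$ vanishes off $N_0$. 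For absolute continuity, if $\mu(E)=0$ then $\mu(E\cup N_0)=0$, hence $\lambda(E\cup N_0)\le s=\lambda(N_0)$; but $\lambda(E\cup N_0)=\lambda(N_0)+\lambda(E\setminus N_0)=s+\lambda_a(E)$, forcing $\lambda_a(E)=0$, so $\lambda_a\ll_{\rm ac}\mu$. An alternative (when $\mu$ is also $\sigma$-finite) is von Neumann's $L^2$ argument: set $\phi=\mu+\lambda$, represent $f\mapsto\int f\,d\lambda$ on $L^2(X,\phi)$ by a function $g$ with $0\le g\le 1$, split $X$ along $\{g<1\}$ and $\{g=1\}$, and read off both the decomposition and the Radon--Nikod\'ym density $g/(1-g)$; this is the route that simultaneously yields the classical Radon--Nikod\'ym theorem invoked earlier in Section~\ref{sec:integrals}.

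For uniqueness, suppose $\lambda=\lambda_a+\lambda_s=\lambda_a'+\lambda_s'$ are two decompositions with the stated properties (all pieces finite after the reduction). Then $\eta:=\lambda_a-\lambda_a'=\lambda_s'-\lambda_s$ is a finite signed measure which is at once absolutely continuous and singular with respect to $\mu$: choosing a $\mu$-null set $P$ off which both $\lambda_s$ and $\lambda_s'$ vanish, we have $\eta(E)=\eta(E\cap P)$ for every $E\in\Sigma$, while $\mu(E\cap P)=0$ gives $\eta(E\cap P)=0$ by absolute continuity. Hence $\eta\equiv 0$, i.e. $\lambda_a=\lambda_a'$ and $\lambda_s=\lambda_s'$.

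The only mildly delicate point, and the one I expect to require the most care to state cleanly, is the reduction from $\sigma$-finite $\lambda$ with $\mu$ entirely unrestricted to the finite case: one must check that the supremum $s$ and the uniqueness computation only ever involve finite quantities, and that patching over the partition $\{X_n\}$ preserves countable additivity together with both $\ll_{\rm ac}$ and $\perp$. Everything else is a routine verification.
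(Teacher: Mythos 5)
Your proposal is correct, and its main route (the maximal-null-set construction: take $s=\sup\{\lambda(N):\mu(N)=0\}$, realize the supremum on a set $N_0$, and split $\lambda$ along $N_0$) is exactly the argument of \cite[Theorem 15.14]{Nielsen} that the paper cites for this classical statement and later adapts for its operator-valued version in Theorem~\ref{thm:OpLebesgue}. The $\sigma$-finite reduction, the uniqueness argument via the signed measure $\eta$ concentrated on a $\mu$-null set, and the aside on von Neumann's $L^2$ proof are all standard and sound, so nothing further is needed.
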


A positive  operator-valued version of the Lebesgue decomposition theorem can be stated as follows. The proof is identical to that of the first part of the proof in \cite[Theorem 15.14]{Nielsen}; we do not need the whole proof as a POVM is the equivalent of a finite measure. 

\begin{theorem}\label{thm:OpLebesgue} Let $\nu:  \mathcal{O}(X)\rightarrow B(\mathcal{H}_1)$ and $\omega:  \mathcal{O}(X)\rightarrow B(\mathcal{H}_2)$ be POVMs. Then there exists unique POVMs   $\omega_a$ and $\omega_s$ such that $\omega=\omega_a  + \omega_s$, $\omega_a \ll_{ac} \nu$ and $\omega_s \perp \nu$.
\end{theorem}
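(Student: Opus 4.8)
The plan is to mimic the classical construction of the Lebesgue decomposition of a finite measure, working entry-by-entry with respect to fixed orthonormal bases, and then reassemble the pieces into genuine POVMs. Fix a full-rank density operator $\rho \in S(\mathcal H_1)$ and an orthonormal basis $\{f_n\}$ of $\mathcal H_2$; the key observation is that, since $\nu$ and $\nu_\rho$ are mutually absolutely continuous (as noted in Section \ref{sec:integrals}), a set $E$ has $\nu(E) = 0$ if and only if $\nu_\rho(E) = 0$, so absolute continuity with respect to $\nu$ is the same as absolute continuity with respect to the ordinary finite measure $\nu_\rho$. Thus I would first invoke the classical Lebesgue decomposition (Theorem \ref{thm:ClassLebesgue}) to split the \emph{scalar} measure $\mu := \nu_\rho$ itself: there is a set $N \in \mathcal O(X)$ carrying the $\mu$-singular part, but more useful is the following. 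Apply Theorem \ref{thm:ClassLebesgue} to the finite measure $\omega_{g} := \tr(g\,\omega(\cdot))$ for each state $g \in S(\mathcal H_2)$ relative to $\mu = \nu_\rho$; classically each such measure splits as an absolutely continuous part plus a part supported on a $\mu$-null set. The subtlety is that different $g$ could a priori require different null sets, so the first real step is to produce a \emph{single} Borel set $S$ (with $\nu_\rho(S) = 0$, hence $\nu(S) = 0$) such that $\omega_a(E) := \omega(E \setminus S)$ and $\omega_s(E) := \omega(E \cap S)$ do the job for \emph{all} of $\mathcal H_2$ at once.

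To get that single set, I would run the standard maximality argument in the operator setting: among all Borel sets $Z$ with $\nu(Z) = 0$ (equivalently $\nu_\rho(Z) = 0$), consider the supremum of $\omega_{\rho_2}(Z) = \tr(\rho_2\,\omega(Z))$ over a fixed full-rank $\rho_2 \in S(\mathcal H_2)$; since $\omega$ is a POVM this quantity is bounded by $1$, so a supremum exists, and one picks an increasing sequence $Z_k$ attaining it and sets $S = \bigcup_k Z_k$. Then $\nu_\rho(S) = 0$ (countable union of null sets), so $\nu(S) = 0$, and by maximality any further $\nu$-null set $Z$ satisfies $\omega_{\rho_2}(Z \setminus S) = 0$; since $\rho_2$ is full-rank this forces $\omega(Z \setminus S) = 0$, i.e. $\omega(Z) = \omega(Z \cap S)$. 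Define $\omega_s(E) = \omega(E \cap S)$ and $\omega_a(E) = \omega(E \setminus S)$. Both are manifestly positive and weakly countably additive (they are restrictions of $\omega$ to measurable pieces), hence POVMs, and $\omega = \omega_a + \omega_s$. Singularity $\omega_s \perp \nu$ is witnessed by the partition $X = S \cup (X \setminus S)$: $\nu$ vanishes on subsets of $S$ and $\omega_s$ vanishes on subsets of $X \setminus S$. Absolute continuity $\omega_a \ll_{\rm ac} \nu$ is exactly the maximality conclusion above: if $\nu(E) = 0$ then $E$ is $\nu$-null, so $\omega(E) = \omega(E \cap S)$, whence $\omega_a(E) = \omega(E \setminus S) = 0$.

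For uniqueness, suppose $\omega = \omega_a' + \omega_s'$ is another such decomposition, with $\omega_s' \perp \nu$ via a partition $X = S' \cup (X \setminus S')$ where $\nu$ vanishes on subsets of $S'$. Then $\nu(S') = 0$, so by $\omega_a \ll_{\rm ac} \nu$ we get $\omega_a(S') = 0$ and similarly $\omega_a'(S') = 0$; also $\omega_s$ vanishes on subsets of $S = X \setminus (\text{its support})$... more cleanly: for any $E$, $\omega_s(E) = \omega_s(E \cap S) $ and $\omega_s$ kills subsets of $X\setminus S$, while $\omega_s'$ kills subsets of $X \setminus S'$. On $E \cap S \cap S'$ both singular parts agree with $\omega$ restricted there minus the absolutely continuous parts, which vanish on the $\nu$-null set $S \cup S'$; on $E \setminus (S \cup S')$ both singular parts vanish and both absolutely continuous parts equal $\omega(E \setminus (S\cup S'))$. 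Reconciling these on the overlap via additivity of $\omega$ gives $\omega_a = \omega_a'$ and $\omega_s = \omega_s'$ as operator-valued set functions. This is the routine part of the classical proof transported verbatim, as the theorem statement already promises.

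The main obstacle is the first paragraph's issue: manufacturing one universal singular-support set $S$ that works simultaneously for the whole (possibly infinite-dimensional) range $\mathcal H_2$, rather than a basis-dependent or state-dependent family of sets. The maximality argument against a fixed full-rank $\rho_2$ resolves this cleanly because full-rank states are faithful on $\mathcal B(\mathcal H_2)_+$, so control of $\tr(\rho_2\,\omega(\cdot))$ upgrades to control of $\omega(\cdot)$ itself; once that is in place, everything else is bookkeeping with restrictions of a POVM, which are automatically POVMs.
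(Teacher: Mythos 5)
Your proof is correct and follows essentially the same route the paper takes: the paper simply cites the classical maximality argument of \cite[Theorem 15.14]{Nielsen} (take a $\nu$-null set $S$ maximizing the $\omega$-mass among all $\nu$-null sets and split $\omega$ by restriction to $S$ and its complement), and your scalarization through a fixed full-rank state $\rho_2$, with faithfulness used to upgrade $\tr(\rho_2\,\omega(Z\setminus S))=0$ to $\omega(Z\setminus S)=0$, is exactly the detail needed to make that classical argument run in the operator-valued setting. No gaps.
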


\subsection{Atomic and nonatomic decomposition}

An \emph{atom} for a positive measure $\mu$ defined on $\Sigma$ is a set $A$ of non-zero measure, such that for each subset $B\subseteq A$ either $\mu(B)=0$ or $\mu(B)=\mu(A)$. If every set of non-zero measure contains an atom then $\mu$ is \emph{atomic}. On the other hand, if $\mu$ has no atoms then $\mu$ is \emph{non-atomic}. Analogous definitions can be made with respect to a POVM $\nu$.

%If we consider two positive measures $\mu$ and $\lambda$ defined on $\Sigma$, we say that $\mu$ is \emph{$\mathcal S$-singular with respect to} $\lambda$, denoted $\mu \mathcal S \lambda$, if given a set $E\in \mathcal{O}(X)$ there exists a subset $F\in \mathcal S$ of $E$, where $\mu(E)=\mu(F)$ and $\lambda(F)=0$.

\begin{theorem}[]\cite{Johnson70}\label{thm:ClassAtom} Let $\mu$ be a positive $\sigma$-finite measure on $\Sigma$. Then there exists positive measures $\mu_a$ and $\mu_{na}$ such that $\mu=\mu_{a} + \mu_{na}$, where $\mu_{a}$ is atomic and $\mu_{na}$ is non-atomic. Additionally, $\mu_a$ and $\mu_{na}$ may be chosen such that $\mu_a\mathcal \perp \mu_{na}$, which, under these conditions, makes them unique.
\end{theorem}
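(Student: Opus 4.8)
The plan is to reduce to a finite measure using $\sigma$-finiteness and then, in the finite case, to isolate the atomic part by concentrating $\mu$ on the union of a maximal pairwise disjoint family of atoms. For the reduction, write $X=\bigsqcup_{n}X_n$ with the $X_n$ pairwise disjoint and $\mu(X_n)<\infty$. Granting the finite case, each $\mu_n:=\mu(\,\cdot\,\cap X_n)$ splits as $\mu_n=\mu_{n,a}+\mu_{n,na}$ with $\mu_{n,a}$ atomic, $\mu_{n,na}$ non-atomic, and $\mu_{n,a}\perp\mu_{n,na}$; I would then set $\mu_a=\sum_n\mu_{n,a}$ and $\mu_{na}=\sum_n\mu_{n,na}$. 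Since $\mu_{n,a}$ and $\mu_{n,na}$ are concentrated on disjoint subsets of $X_n$, a set $E$ with $\mu_a(E)>0$ meets some $X_n$ in a set of positive $\mu_{n,a}$-measure, and an atom of $\mu_{n,a}$ inside $E\cap X_n$ is again an atom of $\mu_a$; a dual (very short) argument, using that the values $\mu_{na}(A\cap X_n)$ for an atom $A$ of $\mu_{na}$ lie in $\{0,\mu_{na}(A)\}$ and sum to $\mu_{na}(A)$, shows $\mu_{na}$ is non-atomic. Taking the union over $n$ of the sets witnessing $\mu_{n,a}\perp\mu_{n,na}$ gives $\mu_a\perp\mu_{na}$.

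For the finite case ($\mu(X)<\infty$), consider all pairwise disjoint families of atoms of $\mu$, ordered by inclusion. The union of a chain of such families is again one, so Zorn's lemma yields a maximal family $\{A_i\}_{i\in I}$; since a pairwise disjoint family of positive-measure sets in a finite measure space is countable, $I$ is countable. Put $Y=\bigcup_i A_i$ and define $\mu_a(E)=\mu(E\cap Y)$ and $\mu_{na}(E)=\mu(E\cap Y^{c})$, so that $\mu=\mu_a+\mu_{na}$ and $\mu_a\perp\mu_{na}$ automatically, via the partition $X=Y\sqcup Y^{c}$. If $\mu_a(E)>0$ then $\mu(E\cap A_i)>0$ for some $i$, so $E\cap A_i$ is a positive-measure subset of the atom $A_i$, hence an atom of $\mu$ and (a one-line computation) of $\mu_a$, and it lies in $E$; thus $\mu_a$ is atomic. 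If $A$ were an atom of $\mu_{na}$ then $A\cap Y^{c}$ would be an atom of $\mu$ contained in $Y^{c}$, and adjoining it to $\{A_i\}$ would contradict maximality; thus $\mu_{na}$ is non-atomic.

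Uniqueness I would deduce from two formal facts, valid for arbitrary measures: a measure that is simultaneously atomic and non-atomic is zero, and the restriction $\lambda(\,\cdot\,\cap S)$ of an atomic (resp.\ non-atomic) measure is again atomic (resp.\ non-atomic). Given two singular decompositions $\mu=\mu_a+\mu_{na}=\mu_a'+\mu_{na}'$ with $\mu_a,\mu_a'$ atomic concentrated on $Y,P$ and $\mu_{na},\mu_{na}'$ non-atomic concentrated on $Y^{c},P^{c}$, restricting this identity to $Y\cap P^{c}$ (where $\mu_{na}$ and $\mu_a'$ vanish) presents $\mu_a(\,\cdot\,\cap Y\cap P^{c})$ at once as atomic and as non-atomic, hence as zero; together with $\mu_a(Y^{c})=0$ this forces $\mu_a=\mu_a(\,\cdot\,\cap P)$, and symmetrically $\mu_a'=\mu_a'(\,\cdot\,\cap Y)$. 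Restricting once more to $Y\cap P$, where $\mu_{na}$ and $\mu_{na}'$ both vanish, gives $\mu_a=\mu=\mu_a'$ there and therefore $\mu_a=\mu_a'$, whence also $\mu_{na}=\mu_{na}'$. Note this argument does not use finiteness, so it covers the $\sigma$-finite case directly.

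The only step that is not pure bookkeeping is the countability of a maximal disjoint family of atoms, and this is exactly where finiteness — hence the $\sigma$-finiteness hypothesis, applied on each $X_n$ — is spent. The remaining point to handle with care is the interaction between atoms of $\mu$ and atoms of the restricted measures $\mu_a$, $\mu_{na}$ (and of the pieces $\mu_{n,a}$, $\mu_{n,na}$): one must check that restricting to a set neither creates nor destroys atoms, which is precisely what the two formal facts invoked in the uniqueness argument record, and the same observation powers the verification that $\mu_a$ is atomic and $\mu_{na}$ is non-atomic in both the finite and the patched $\sigma$-finite constructions.
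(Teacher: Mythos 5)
Your proof is correct. The paper does not prove this classical statement itself (it is quoted from Johnson's paper), but your construction --- extract a maximal, necessarily countable, pairwise disjoint family of atoms, let $\mu_a$ and $\mu_{na}$ be the restrictions of $\mu$ to the union $Y$ of that family and to $Y^{c}$, and derive uniqueness from the singularity condition --- is exactly the strategy the paper adapts in its proof of the operator-valued analogue, Theorem~\ref{thm:a+na}, so there is nothing to add beyond noting that your reduction from the $\sigma$-finite to the finite case and your explicit uniqueness argument fill in details the paper defers to the classical source.
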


The following result states that every POVM can be written uniquely as the sum of an atomic POVM and a non-atomic POVM. The proof is not all that different from the classical setting \cite{Johnson70}.

\begin{theorem}\label{thm:a+na}
Every POVM can be written uniquely as the sum of an atomic POVM and a non-atomic POVM.
\end{theorem}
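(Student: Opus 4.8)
The plan is to mimic Johnson's classical argument, but carried out on the scalar measures induced by the POVM. Fix a full-rank density operator $\rho \in S(\mathcal H)$ and consider the finite measure $\nu_\rho = \tr(\rho\,\nu(\cdot))$. Since $\nu$ is a POVM, $\nu_\rho$ is a finite positive measure, and $\nu$ and $\nu_\rho$ are mutually absolutely continuous (a nonzero positive operator has nonzero trace against a full-rank state). Apply the classical Johnson decomposition (Theorem \ref{thm:ClassAtom}) to $\nu_\rho$ to obtain $\nu_\rho = (\nu_\rho)_a + (\nu_\rho)_{na}$ with $(\nu_\rho)_a \perp (\nu_\rho)_{na}$; let $A, N \in \mathcal O(X)$ be a corresponding partition of $X$ with $A \cup N = X$, $A \cap N = \emptyset$, such that $(\nu_\rho)_a$ is concentrated on $A$ and $(\nu_\rho)_{na}$ on $N$. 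The natural candidate is then $\nu_a(E) = \nu(E \cap A)$ and $\nu_{na}(E) = \nu(E \cap N)$ for $E \in \mathcal O(X)$.

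First I would check that $\nu_a$ and $\nu_{na}$ are POVMs: positivity and weak countable additivity are inherited immediately from $\nu$ since $E \mapsto E \cap A$ commutes with countable disjoint unions, and clearly $\nu = \nu_a + \nu_{na}$. Next I would verify that $\nu_a$ is atomic and $\nu_{na}$ is non-atomic, and here the equivalence of atomicity for $\nu$ and for $\nu_\rho$ is the key reduction: a set $B$ has $\nu(B) = 0$ iff $\nu_\rho(B) = 0$ (mutual absolute continuity), and $\nu(B) = \nu(B')$ with $B \subseteq B'$ iff $\nu(B' \setminus B) = 0$ iff $\nu_\rho(B' \setminus B) = 0$; so $B$ is an atom for $\nu$ iff it is an atom for $\nu_\rho$, and the ``every set of positive measure contains an atom'' condition transfers verbatim. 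Restricting $\nu$ to subsets of $A$ corresponds to restricting $\nu_\rho$ to $A$, which is purely atomic; restricting to $N$ gives the non-atomic part. Thus $\nu_a$ is atomic and $\nu_{na}$ is non-atomic.

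For uniqueness, suppose $\nu = \omega_a + \omega_{na}$ is another such decomposition into an atomic and a non-atomic POVM. Passing to the induced scalar measures, $\nu_\rho = (\omega_a)_\rho + (\omega_{na})_\rho$, and by the same mutual-absolute-continuity dictionary $(\omega_a)_\rho$ is an atomic measure and $(\omega_{na})_\rho$ is non-atomic. Here I would need the uniqueness half of Theorem \ref{thm:ClassAtom}, which requires the atomic and non-atomic parts to be mutually singular; I would argue that for finite measures any decomposition into atomic plus non-atomic automatically has mutually singular summands (the atomic part lives on a countable union of atoms, off of which it vanishes, and a non-atomic measure assigns measure zero to any countable union of $\nu_\rho$-atoms), so classical uniqueness gives $(\omega_a)_\rho = (\nu_\rho)_a$ and $(\omega_{na})_\rho = (\nu_\rho)_{na}$, hence $\omega_a$ and $\nu_a$ agree on measure-defining data. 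To upgrade this scalar agreement to operator equality, I would run the argument not just for $\nu_\rho$ but for every matrix entry measure $\nu_{ij}(\cdot) = \langle \nu(\cdot)e_j, e_i\rangle$: each $\nu_{ij} \ll_{\rm ac} \nu_\rho$, so $\nu_{ij}$ is concentrated (as a complex measure) on $A \cup N$ in the same way, and $(\omega_a)_{ij}$ is supported on the atomic part of $\nu_\rho$ while $(\omega_{na})_{ij}$ is supported on its non-atomic part; classical uniqueness applied entrywise forces $\omega_a(E) = \nu(E \cap A) = \nu_a(E)$ and similarly for the non-atomic part.

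The main obstacle I anticipate is precisely this last upgrade from scalar to operator statements, i.e. making sure ``atomic'' and ``non-atomic'' for a POVM are genuinely controlled by a single faithful scalar measure and that the uniqueness is not lost in the passage; the mutual absolute continuity of $\nu$, $\nu_\rho$, and all the $\nu_{ij}$ is the crucial lever, and I would want to state cleanly the lemma that $B$ is a $\nu$-atom iff $\nu_\rho(B) > 0$ and $\nu_\rho$ restricted to $B$ is a multiple of a point-mass-like atom, so that the classical theorem can be invoked essentially as a black box on $\nu_\rho$.
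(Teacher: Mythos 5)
Your proposal is correct and ultimately produces the same decomposition as the paper: in both cases $\nu_a(E)=\nu(E\cap A)$ and $\nu_{na}(E)=\nu(E\cap N)$ for a Borel partition $X=A\cup N$, and in both cases the engine is the mutual absolute continuity of $\nu$ and the finite scalar measure $\nu_\rho$ for a full-rank state $\rho$. The routes differ in emphasis. The paper works at the operator level: it takes a maximal countable disjoint family of $\nu$-atoms (countability coming from $\nu_\rho$ being finite), sets $A$ to be their union, and verifies atomicity and non-atomicity of the restrictions directly, deferring uniqueness entirely to ``the classical proof.'' You instead prove the dictionary that $B$ is a $\nu$-atom if and only if it is a $\nu_\rho$-atom --- which is correct, since for $C\subseteq B$ one has $\nu(C)=\nu(B)$ iff $\nu(B\setminus C)=0$ iff $\nu_\rho(B\setminus C)=0$ --- and then invoke Johnson's theorem on $\nu_\rho$ as a black box. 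Since the two constructions yield the same supports, the results agree; your version buys a cleaner and more honest uniqueness argument (the passage from $(\omega_a)_\rho=(\nu_\rho)_a$ to $\omega_a=\nu_a$ via concentration of $\omega_a$ on $A$ and absolute continuity is exactly the right step), at the cost of having to prove the atom dictionary, which the paper only uses implicitly.

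One repair is needed in your uniqueness step. The claim that any decomposition of a finite measure into an atomic plus a non-atomic summand is automatically mutually singular is true, but your justification conflates two kinds of atoms: $(\omega_a)_\rho$ is concentrated on a countable union of its \emph{own} atoms, and these need not be $\nu_\rho$-atoms (for instance $[0,1]$ is an atom of $\delta_0$ but not of $\delta_0+\lambda$ with $\lambda$ Lebesgue), so the observation that a non-atomic measure dominated by $\nu_\rho$ kills every $\nu_\rho$-atom does not apply as written. The standard fix is a shrinking argument: if $A$ is an atom of $\alpha:=(\omega_a)_\rho$ and $\beta:=(\omega_{na})_\rho$ is non-atomic, minimize $\beta(B)$ over subsets $B\subseteq A$ with $\alpha(B)=\alpha(A)$; the infimum is attained, and if it were positive, non-atomicity of $\beta$ would produce $C\subseteq B$ with $\alpha(C)=0$ and $\beta(C)>0$, letting you shrink further. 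Hence each atom of $\alpha$ contains a $\beta$-null subset of full $\alpha$-measure, which gives $\alpha\perp\beta$ and lets you invoke the uniqueness clause of Theorem \ref{thm:ClassAtom}. With that lemma inserted, your argument is complete.
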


%We note that in the case where $\nu$ is a quantum probability measure, the decomposition is not necessarily into the sum of two quantum probability measures (unless $\nu$ itself is atomic or non-atomic, in which case the decomposition is trivial). Indeed, if $\nu=\nu_1+\nu_2$ such that $\nu_1(X)=\nu_2(X)=1$, then $\nu(X)=2$, and so $\nu$ would not be a quantum probability measure. 

The classical decomposition makes use of the notion of a positive measure  being $\mathcal{S}$-singular with respect to another positive measure \cite{Johnson70}; this weaker notion is equivalent to the notion of singular because we are dealing with $\sigma$-finite measures \cite[Theorem 3.3]{Johnson67}. 
 
\begin{proof}
Let $\nu:\mathcal{O}(X)\rightarrow \mathcal{B}(\mathcal{H})$ be a POVM. We wish to show there exists quantum measures $\nu_1, \nu_2$ such that $\nu_1$ is atomic, $\nu_2$ is non-atomic, and $\nu=\nu_1+\nu_2$. 

There is at most a countable set of disjoint atoms $\{A_n\}$ because $\nu$ is mutually absolutely continuous to a finite classical measure $\nu_\rho$.  Suppose this family is exhaustive meaning that for any atom $A$ we have that $\nu(A \cap (\cup A_n)^C) = 0$.
Let $X_a = \cup A_n$ and $X_{na} = X \setminus X_a$, then $X_a, X_{na} \in \mathcal{O}(X)$. We will see that these are the atomic and non-atomic supports for $\nu$. For each $E\in \mathcal{O}(X)$, define 
\begin{eqnarray*}
\nu_1(E)&=& \nu(E \cap X_a) \\ %A for atoms
\nu_2(E)&=& \nu(E \cap X_{na}). %N is for null 
\end{eqnarray*}
Since $\nu_1$ and $\nu_2$ are both defined by restrictions to measurable sets they are both POVMs automatically and 
\[
\nu(E) = \nu(E\cap X_a) + \nu(E \cap X_a^C) = \nu_1(E) + \nu_2(E).
\]
Thus, $\nu = \nu_1 + \nu_2$.

Now let us establish that $\nu_1$ is atomic. Suppose $E\in \mathcal{O}(X)$ with $\nu_1(E) \neq 0$. This implies that
\[
0\neq \nu_1(E) = \nu(E \cap X_a) = \nu(\cup (E \cap A_n))
\]
and by the disjointness of the sets we must have an $n\in \mathbb N$ such that $\nu(E\cap A_n)\neq 0$. Thus, $E\cap A_n$ is an atom since it is a subset of an atom that has nonzero measure. Hence, $\nu_1$ is atomic.

Next suppose that $E\in \mathcal{O}(X)$ is an atom of $\nu_2$. So,
\[
\nu_2(E\cap X_{na}) = \nu_2(E)
\]
which implies that $E\cap X_{na}$ is also an atom of $\nu_2$ and thus an atom of $\nu$. However, by the way we defined $\nu_1$ for any atom we have $\nu(E\cap X_{na}) = \nu_1(E\cap X_{na}) = \nu(E \cap X_{na} \cap X_a) = 0$, a contradiction. Therefore, $\nu_2$ has no atoms and is thus non-atomic.

Uniqueness follows in the exact same manner as for the classical proof. 
\end{proof}

\begin{theorem}
Let $\nu : \mathcal O(X) \rightarrow \mathcal{B}(\mathcal H)$ be a quantum probability measure. There exists atomic and non-atomic quantum probability measures $\nu_a$ and $\nu_{na}$, respectively, and $P\in \mathcal{B}(\mathcal H)$, $0\leq P\leq I$ such that
\[
\nu(E) = P^{1/2}\nu_a(E)P^{1/2} + (I-P)^{1/2}\nu_{na}(E)(I-P)^{1/2}, \ E\in \mathcal O(X).
\]
\end{theorem}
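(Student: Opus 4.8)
The plan is to combine the atomic/non-atomic decomposition of Theorem~\ref{thm:a+na} with the factorization of Theorem~\ref{thm:douglas}. First apply Theorem~\ref{thm:a+na} to write $\nu=\nu_1+\nu_2$ with $\nu_1$ an atomic POVM and $\nu_2$ a non-atomic POVM. Since $\nu_1(X)+\nu_2(X)=\nu(X)=I_\mathcal H$ with both summands positive, put $P:=\nu_1(X)$, so $0\le P\le I$ and $I-P=\nu_2(X)$. Applying Theorem~\ref{thm:douglas} to $\nu_1$ and to $\nu_2$ separately yields quantum probability measures $\nu_a,\nu_{na}$ with $\nu_1(E)=P^{1/2}\nu_a(E)P^{1/2}$ and $\nu_2(E)=(I-P)^{1/2}\nu_{na}(E)(I-P)^{1/2}$, and summing these gives the claimed identity. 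Everything then reduces to arranging that $\nu_a$ can be taken atomic and $\nu_{na}$ non-atomic.

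For this one must look inside the proof of Theorem~\ref{thm:douglas}: the quantum probability measure it produces for a POVM $\mu_0$ has the form $\omega=\omega_1+\lambda(\cdot)P_{\ker\mu_0(X)}$, where $\omega_1(E)=C_EC_E^*$ is determined by $\mu_0$ via Douglas' lemma, while $\lambda$ is an arbitrary probability measure on $X$. The first step is that $\omega_1$ has the same atomic/non-atomic type as $\mu_0$: from the Douglas-lemma properties, $\mu_0(E)=0\iff C_E=0\iff\omega_1(E)=0$, and for any $A,B$ one has $\mu_0(B)=\mu_0(A)\iff\omega_1(B)=\omega_1(A)$, since $\omega_1(B)-\omega_1(A)$ is self-adjoint and supported on $\overline{{\rm ran}\,\mu_0(X)}$, on which $\mu_0(X)^{1/2}$ is injective, so $\mu_0(X)^{1/2}(\omega_1(B)-\omega_1(A))\mu_0(X)^{1/2}=0$ forces it to vanish. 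Consequently $E$ is a $\mu_0$-atom exactly when it is an $\omega_1$-atom, so $\omega_1$ is atomic iff $\mu_0$ is, and non-atomic iff $\mu_0$ is.

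The second step is to choose $\lambda$ appropriately. For $\nu_a$, take $\lambda$ to be an atomic probability measure absolutely continuous with respect to $\nu_1$ (e.g.\ the normalization of $(\nu_1)_\rho$ for a full-rank $\rho$ when $\nu_1\neq0$, which is atomic and mutually absolutely continuous with $\nu_1$); for $\nu_{na}$, take $\lambda$ non-atomic (e.g.\ the normalization of $(\nu_2)_\rho$ when $\nu_2\neq0$, which is non-atomic, such a measure necessarily existing once $\nu_2\neq0$). Since $\omega_1(E)$ and $\lambda(E)P_{\ker\mu_0(X)}$ act on the orthogonal subspaces $\overline{{\rm ran}\,\mu_0(X)}$ and $\ker\mu_0(X)$, the two summands cannot cancel, and a routine atom-chase — for $\nu_a$ using absolute continuity of $\lambda$ to match $\lambda$-atoms with $\nu_1$-atoms, for $\nu_{na}$ splitting according to whether the $\omega_1$-part or the $\lambda$-part of $\nu_{na}(A)$ is nonzero — shows $\nu_a$ is atomic and $\nu_{na}$ has no atom. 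Finally $\nu_a(X)=P_{\overline{{\rm ran}\,P}}+P_{\ker P}=I$ and likewise $\nu_{na}(X)=I$, so both are quantum probability measures; the degenerate cases $\nu_1=0$ and $\nu_2=0$ are immediate since then the corresponding term of the formula vanishes.

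I expect the second and third steps to be the crux: Theorem~\ref{thm:douglas} only determines $\nu_a,\nu_{na}$ on the ranges of $P$ and $I-P$, and a careless completion on the complementary subspace can destroy atomicity, so the completions must be made via the induced scalar measures and then the atom bookkeeping must be pushed through the block decomposition. The feature that makes all of this work is the injectivity of $\mu_0(X)^{1/2}$ on the closure of its range.
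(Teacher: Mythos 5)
Your proposal is correct and follows essentially the same route as the paper: decompose $\nu=\nu_1+\nu_2$ via Theorem \ref{thm:a+na}, factor each summand through Theorem \ref{thm:douglas} (the paper invokes Corollary \ref{cor:cstarconvex}), take $P=\nu_1(X)$, and arrange atomicity/non-atomicity by inspecting the construction $\omega=\omega_1+\lambda(\cdot)P_{\ker\nu_i(X)}$ and choosing the auxiliary probability measure $\lambda$ of the right type. Your write-up actually supplies details the paper only asserts --- notably the injectivity argument showing $\omega_1$ inherits the atomic type of $\nu_i$, and an explicit atom-chase --- and your choice of $\lambda$ (a normalized induced measure rather than the paper's Dirac mass) is an inessential variant.
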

\begin{proof}
By the previous theorem we have $\nu$ decomposed into its atomic and non-atomic parts, $\nu = \nu_a + \nu_{na}$.
By Corollary \ref{cor:cstarconvex} there exists quantum probability measures $\gamma_a, \gamma_{na} : \mathcal O(X) \rightarrow \mathcal B(\mathcal H)$ such that 
\[
\nu(E) = \nu_a(X)^{1/2}\gamma_a(E)\nu_a(X)^{1/2} + \nu_{na}(X)^{1/2}\gamma_{na}(E)\nu_{na}(X)^{1/2}
\]
for every $E\in \mathcal O(X)$.
A look back at the proof of Theorem \ref{thm:douglas} gives that in the case of $\gamma_a = (\gamma_a)_1 + (\gamma_a)_2$ we already have that $(\gamma_a)_1$ is atomic and $(\gamma_a)_2$ can be chosen to be atomic, for $\mu$ a Dirac mass say. Thus, $\gamma_a$ is an atomic quantum probability measure. In the same way, $\gamma_{na}$ can also be chosen to be non-atomic. 

Therefore, the conclusion follows for $P = \nu_a(X) = I - \nu_{na}(X)$.
\end{proof}

The classical version of the following result can be found in \cite[Theorem 2.4]{Johnson70}; the  proof is analogous to  that found in \cite{Johnson70} other than we again note that the concept of $\mathcal{S}$-singular is identical to singularity in our context. 

\begin{proposition} \label{prop:abscontna}
Suppose $\nu_i: \mathcal{O}(X)\rightarrow \mathcal{B}(\mathcal H_i)$, $i=1,2$ are POVMs such that $\nu_1\leac \nu_2$. If   $\nu_1$ is non-atomic then $\nu_2$ is non-atomic. If   $\nu_1$ is  atomic then $\nu_2$ is atomic. Hence, if $\nu_1$ is nonzero and atomic, then $\nu_2$ has an atom. 
\end{proposition}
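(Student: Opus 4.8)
The plan is to deduce the whole statement from the classical theory (Theorem~\ref{thm:ClassAtom} and the arguments of \cite{Johnson70}) by replacing each POVM with an honest scalar measure carrying the same null sets. First I would fix full-rank density operators $\rho_i\in S(\mathcal H_i)$ and pass to the finite Borel measures $\mu_i:=(\nu_i)_{\rho_i}$ on $X$ introduced in Section~\ref{sec:integrals}. Since $\nu_i$ and $\mu_i$ are mutually absolutely continuous, a set is $\nu_i$-null exactly when it is $\mu_i$-null, so every piece of null-ideal data of $\nu_i$ is faithfully recorded by $\mu_i$.

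The point that makes this reduction legitimate is that \emph{atomicity, non-atomicity, absolute continuity, and being nonzero all depend only on the null ideal}. Indeed, for $B\subseteq A$, additivity of $\nu_i$ shows $\nu_i(B)=\nu_i(A)$ is equivalent to $\nu_i(A\setminus B)=0$; hence $A$ is an atom of $\nu_i$ precisely when $\nu_i(A)\neq 0$ and every measurable $B\subseteq A$ has $\nu_i(B)=0$ or $\nu_i(A\setminus B)=0$, a condition phrased entirely in terms of $\nu_i$-null sets. Consequently $A$ is an atom of $\nu_i$ iff it is an atom of $\mu_i$, so $\nu_i$ is atomic (resp.\ non-atomic) iff $\mu_i$ is; likewise $\nu_1\leac\nu_2$ translates into $\mu_1\leac\mu_2$, and $\nu_1\neq 0$ forces $\nu_2\neq 0$ (if $\nu_2$ were everywhere $0$ then, by absolute continuity, so would $\nu_1$ be).

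With this dictionary the implications of the proposition reduce to statements about the finite measures $\mu_1\leac\mu_2$ of exactly the form handled in \cite[Theorem~2.4]{Johnson70}; here one uses, as remarked before the statement, that $\mathcal S$-singularity coincides with ordinary singularity for $\sigma$-finite---in our case finite---measures. I would isolate the two classical ingredients. The atomic case is the easier one: take a countable exhaustive family of atoms $\{A_n\}$ (so that the complement of $\bigcup A_n$ is null), transfer this null set across the absolute continuity, and check that any set of positive measure meets some $A_n$ in a subset of positive measure, which is then an atom contained in it --- essentially the bookkeeping already carried out in the proof of Theorem~\ref{thm:a+na}. The non-atomic case is the substantive step: given a would-be atom $A$, build a Sierpi\'nski--Lyapunov chain $\{A_t\}_{t\in[0,1]}$ of measurable subsets of $A$ on which the dominating measure equals $t$ times its mass on $A$, invoke the $\varepsilon$--$\delta$ form of absolute continuity of a finite measure to see that $t\mapsto(\text{dominated measure})(A_t)$ is continuous, and then split $A$ by the intermediate value theorem. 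This last step is the one genuine obstacle; everything operator-theoretic has already been absorbed into the null-ideal reduction.

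Finally I would treat the ``hence'' clause directly. If $\nu_1$ is nonzero and atomic, then $\mu_1$ is a nonzero atomic finite measure and $\nu_2\neq 0$. Were $\nu_2$ non-atomic, the non-atomic ingredient above applied to the pair $\mu_1\leac\mu_2$ would force $\mu_1$, hence $\nu_1$, to be non-atomic --- impossible, since a nonzero atomic POVM has $X$ itself as a set of nonzero measure and therefore contains an atom. Thus $\nu_2$ cannot be non-atomic, i.e.\ $\nu_2$ has an atom, which finishes the argument.
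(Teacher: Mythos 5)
Your reduction to scalar measures is sound and is essentially the route the paper intends: since $\nu_i$ and $(\nu_i)_{\rho_i}$ are mutually absolutely continuous for full-rank $\rho_i$, and since (as you correctly observe, using only additivity) being an atom, being atomic, being non-atomic, being nonzero, and the relation $\leac$ are all properties of the null ideal, the proposition does collapse to Johnson's scalar statement. The paper offers little more than this reduction itself, deferring to \cite{Johnson70}.

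The problem is the direction of the implications. Both of your ``classical ingredients'' prove that atomicity/non-atomicity passes from the \emph{dominating} measure to the \emph{dominated} one: your Sierpi\'nski chain is built inside a would-be atom of the dominated measure using non-atomicity of the dominating measure, and your step ``transfer this null set across the absolute continuity'' only works when the complement of the union of atoms is null for the dominating measure (a $\mu_1$-null set need not be $\mu_2$-null when $\mu_1\leac\mu_2$). So what you actually establish is: if $\nu_2$ is non-atomic (resp.\ atomic) then so is $\nu_1$ --- which is Johnson's Theorem 2.4, is the form in which the proposition is invoked in the corollary that follows it, and does yield the ``hence'' clause exactly as in your last paragraph. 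It is \emph{not} the first two sentences as printed, and those are false as printed: take $X=[0,1]$, $\mathcal H_1=\mathcal H_2=\mathbb C$, $\nu_1$ Lebesgue measure and $\nu_2$ Lebesgue measure plus $\delta_0$; then $\nu_1\leac\nu_2$ and $\nu_1$ is non-atomic, yet $\{0\}$ is an atom of $\nu_2$ (and taking $\nu_1=\delta_0$ instead kills the atomic clause). So you should either flag the apparent index swap and prove the corrected implications --- which your argument does --- or accept that the literal statement admits no proof; in particular, as written, your atomic-case transfer of the null set runs the wrong way across $\leac$.
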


\begin{corollary}
Suppose $\nu_i : \mathcal{O}(X)\rightarrow \mathcal{B}(\mathcal H_i)$, $i=1,2$ are POVMs such that $\nu_1 \leac \nu_2$. If $\nu_i = (\nu_i)_a + (\nu_i)_{na}$ is the unique decomposition of $\nu_i$ into its atomic and nonatomic parts then $(\nu_1)_a \leac (\nu_2)_a$ and $(\nu_1)_{na} \leac (\nu_2)_{na}.$
\end{corollary}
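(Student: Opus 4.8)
The plan is to use the explicit description of the atomic/non-atomic decomposition coming out of the proof of Theorem~\ref{thm:a+na}: for $i=1,2$ there are disjoint measurable sets $X_a^{(i)},X_{na}^{(i)}$ with $X=X_a^{(i)}\cup X_{na}^{(i)}$ and $(\nu_i)_a(E)=\nu_i(E\cap X_a^{(i)})$, $(\nu_i)_{na}(E)=\nu_i(E\cap X_{na}^{(i)})$ for all $E\in\mathcal{O}(X)$. For a measurable set $Y$, write $\nu_i|_Y(\,\cdot\,):=\nu_i(\,\cdot\,\cap Y)$; this is again a POVM into $\mathcal{B}(\mathcal{H}_i)$, it is atomic whenever $\nu_i$ is atomic on $Y$ and non-atomic whenever $\nu_i$ is non-atomic on $Y$ (an atom of a restriction is an atom of the original measure, and an atom of $\nu_i$ contained in $Y$ is an atom of $\nu_i|_Y$), and $\nu_1\leac\nu_2$ immediately gives $\nu_1|_Y\leac\nu_2|_Y$. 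I will also use the trivial observation that a POVM which is simultaneously atomic and non-atomic must be the zero measure, since a nonzero atomic POVM has an atom and atoms have nonzero measure.

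The \emph{key claim} is that $\nu_1(X_a^{(1)}\cap X_{na}^{(2)})=0$ and $\nu_1(X_{na}^{(1)}\cap X_a^{(2)})=0$. For the first, put $Y=X_a^{(1)}\cap X_{na}^{(2)}$; then $\nu_1|_Y=(\nu_1)_a|_Y$ is atomic while $\nu_2|_Y=(\nu_2)_{na}|_Y$ is non-atomic. If $\nu_1|_Y\neq 0$, then Proposition~\ref{prop:abscontna} applied to $\nu_1|_Y\leac\nu_2|_Y$ forces $\nu_2|_Y$ to have an atom, contradicting non-atomicity; hence $\nu_1(Y)=0$. For the second, put $Z=X_{na}^{(1)}\cap X_a^{(2)}$; now $\nu_1|_Z$ is non-atomic and $\nu_2|_Z$ is atomic. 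By Proposition~\ref{prop:abscontna}, $\nu_1|_Z$ non-atomic together with $\nu_1|_Z\leac\nu_2|_Z$ forces $\nu_2|_Z$ to be non-atomic as well, so $\nu_2|_Z$ is both atomic and non-atomic and therefore zero; then $\nu_1|_Z=0$ by absolute continuity, so $\nu_1(Z)=0$.

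Finally I convert the key claim into the statement. Using finite additivity of $\nu_1$ and monotonicity of positive operator-valued measures (if $F\subseteq G$ then $\nu_1(F)\leq\nu_1(G)$, and $0\leq\nu_1(F)\leq 0$ forces $\nu_1(F)=0$), the key claim yields $(\nu_1)_a(E)=\nu_1(E\cap X_a^{(1)}\cap X_a^{(2)})$ and $(\nu_1)_{na}(E)=\nu_1(E\cap X_{na}^{(1)}\cap X_{na}^{(2)})$ for every $E\in\mathcal{O}(X)$. Now suppose $(\nu_2)_a(E)=\nu_2(E\cap X_a^{(2)})=0$; then $\nu_1(E\cap X_a^{(2)})=0$ by $\nu_1\leac\nu_2$, hence $\nu_1(E\cap X_a^{(1)}\cap X_a^{(2)})=0$ by monotonicity, i.e.\ $(\nu_1)_a(E)=0$. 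Thus $(\nu_1)_a\leac(\nu_2)_a$, and the argument for $(\nu_1)_{na}\leac(\nu_2)_{na}$ is word-for-word the same with $X_{na}$ replacing $X_a$.

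I expect the key claim to be the main obstacle: the care needed is in verifying that restriction to a measurable subset genuinely preserves atomicity, non-atomicity, and absolute continuity, and that Proposition~\ref{prop:abscontna} is being applied to bona fide POVMs; the remark that a simultaneously atomic and non-atomic POVM vanishes is what closes the $Z$-case, and monotonicity of the POVM is what makes the final conversion step routine.
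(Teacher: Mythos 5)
Your overall strategy is essentially the paper's: both proofs hinge on restricting $\nu_1$ and $\nu_2$ to the supports of the atomic and non-atomic parts and invoking Proposition \ref{prop:abscontna}. The paper works only with the $\nu_2$-supports and then appeals to uniqueness of the decomposition of $\nu_1$, whereas you work with both supports and kill the two cross terms explicitly; your final conversion via monotonicity is fine, and so is the $Y$-case of your key claim (a nonzero atomic $\nu_1|_Y$ dominated by $\nu_2|_Y$ forces $\nu_2|_Y$ to have an atom, which is exactly the last sentence of Proposition \ref{prop:abscontna} and is correct).

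The $Z$-case, however, contains a step that fails. From ``$\nu_1|_Z$ non-atomic and $\nu_1|_Z \leac \nu_2|_Z$'' you conclude that $\nu_2|_Z$ is non-atomic, hence zero. That intermediate conclusion is false in general: take $X=[0,1]$, $\nu_1=\mu$ (Lebesgue) and $\nu_2=\mu+\delta_0$, so that $Z=X_{na}^{(1)}\cap X_a^{(2)}=\{0\}$ and $\nu_2(Z)=1\neq 0$, even though $\nu_1(Z)=0$. The source of the trouble is the direction of transfer: atomicity and non-atomicity pass from the dominating measure down to the dominated one (this is Johnson's Theorem 2.4, and it is the direction the paper's own proof of the corollary uses), not upward as the first two sentences of Proposition \ref{prop:abscontna} literally read --- the same example shows those sentences cannot be taken at face value, and only the final ``Hence\dots'' clause, being the contrapositive of the correctly oriented non-atomic statement, survives verbatim. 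The repair is one line: since $\nu_2|_Z=(\nu_2)_a|_Z$ is atomic and $\nu_1|_Z\leac\nu_2|_Z$, the correctly oriented proposition makes $\nu_1|_Z$ atomic; being also non-atomic, $\nu_1|_Z=0$, which is all your final step needs (and note that $\nu_2(Z)$ may genuinely be nonzero). With that substitution your argument is complete and matches the paper's in substance.
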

\begin{proof}
Let $E\in \mathcal O(X)$ be such that $\nu_2|_E = (\nu_2)_a|_{E}$ and $\nu_2|_{X \setminus E} = (\nu_2)_{na}|_{X\setminus E}$, that is $E$ is the support of the atomic part of $\nu_2$. Now
\[
\nu_1|_{X\setminus E} \leac \nu_2|_{X\setminus E} = (\nu_2)_{na}|_{X\setminus E}\ \  \ \ \textrm{and} \ \ \ \ \nu_1|_{E} \leac \nu_2|_{E} = (\nu_2)_{a}|_{E}
\]
which by the previous proposition implies that $\nu_1|_{X\setminus E}$ is nonatomic and $\nu_1|_{E}$ is atomic. By uniqueness of the atomic/non-atomic decomposition the result follows.
\end{proof}

%\begin{corollary}\label{cor:atomic}
%Let $\nu\in \povm_\H(X)$ and let $\mu$ be the induced classical probability measure. Then $\nu$ is  atomic if and only if $\mu$ is  atomic. 
%\end{corollary}

\section{Clean and informationally complete OVMs}\label{sec:cleanIC}

The atomic/nonatomic decomposition leads to some applications in the study of quantum probability measures in quantum information theory. 
\begin{definition} If $\nu$ is a quantum probability measure on $(X,\mathcal{O}(X))$, then
the  \emph{range} of $\nu$ is the set 
\[
\mathcal{R}_\nu\,=\,\{\nu(E)\,:\,E\in\mathcal{O}(X)\}\,\subset\,\mathcal{B}(\mathcal{H})_+,
\]
and 
the \emph{measurement space of $\nu$} is the vector space
\[
\mathcal{T}_\nu\,=\,\left(\mbox{\rm Span}_{\mathbb C}\,\mathcal{R}_\nu\right)^{\sigma{\rm-wk}}  \,\subset\,\mathcal{B}(\mathcal{H})\,,
\]
the ultraweak closure of all linear combinations of operators of the form $\nu(E)$, for $E\in\mathcal{O}(X)$.
\end{definition}

\begin{definition}
A quantum probability measure $\nu$ is \emph{informationally complete} if, for any density operators
$\rho_1,\rho_2\in\mathcal{B}(\mathcal{H})$, $\tr(\rho_1 \nu(E))=\tr(\rho_2 \nu(E))$ for every $E\in\mathcal{O}(X)$ implies $\rho_1=\rho_2$.
\end{definition}

The next result is a consequence of the Hahn-Banach separation theorem:

\begin{proposition}\label{ic-span} The following statements are equivalent for a quantum probability measure $\nu$:
\begin{enumerate}
\item $\nu$ is informationally complete;
\item $\mathcal{T}_\nu=\mathcal{B}(\mathcal{H})$.
\end{enumerate}
\end{proposition}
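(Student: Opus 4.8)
The plan is to prove the equivalence by the standard duality argument, recognizing that informational completeness is precisely a separation property for the predual pairing between $\mathcal{T}(\mathcal H)$ and $\mathcal{B}(\mathcal H)$.

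First I would establish the direction $(2)\Rightarrow(1)$, which is immediate: if $\tr(\rho_1\nu(E))=\tr(\rho_2\nu(E))$ for all $E\in\mathcal{O}(X)$, then by linearity $\tr((\rho_1-\rho_2)T)=0$ for every $T$ in $\mbox{Span}_{\mathbb C}\,\mathcal{R}_\nu$; since $\rho_1-\rho_2\in\mathcal{T}(\mathcal H)$ induces an ultraweakly continuous functional on $\mathcal{B}(\mathcal H)$, this extends to the ultraweak closure $\mathcal{T}_\nu$. If $\mathcal{T}_\nu=\mathcal{B}(\mathcal H)$ then in particular $\tr((\rho_1-\rho_2)T)=0$ for all $T$, and since the trace pairing is nondegenerate (states, indeed all of $\mathcal{T}(\mathcal H)$, separate $\mathcal{B}(\mathcal H)$ and vice versa) we get $\rho_1=\rho_2$.

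For $(1)\Rightarrow(2)$ I would argue by contrapositive: suppose $\mathcal{T}_\nu\subsetneq\mathcal{B}(\mathcal H)$. Since $\mathcal{T}_\nu$ is an ultraweakly closed (hence weak-$*$ closed, identifying $\mathcal{B}(\mathcal H)=\mathcal{T}(\mathcal H)^*$) proper subspace, the Hahn--Banach separation theorem in the weak-$*$ topology furnishes a nonzero weak-$*$ continuous functional annihilating $\mathcal{T}_\nu$; such a functional is given by some nonzero $\tau\in\mathcal{T}(\mathcal H)$ with $\tr(\tau T)=0$ for all $T\in\mathcal{T}_\nu$, in particular $\tr(\tau\nu(E))=0$ for all $E$. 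Decomposing $\tau$ into its self-adjoint and skew-adjoint parts and then each self-adjoint piece into positive and negative parts (as in Corollary~\ref{cor:four}'s philosophy, but at the level of trace-class operators), and rescaling, one produces two \emph{distinct} density operators $\rho_1,\rho_2$ with $\rho_1-\rho_2$ a nonzero multiple of $\tau$, so that $\tr(\rho_1\nu(E))=\tr(\rho_2\nu(E))$ for all $E$; this contradicts informational completeness. A small care point here: one must ensure the scaling can be arranged so both resulting operators have unit trace and are genuinely different, which is routine since $\tau\neq 0$ and one has freedom to add a common positive multiple of any fixed state.

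The main obstacle, such as it is, is bookkeeping around the ultraweak closure: one needs that a trace-class operator annihilating the span $\mbox{Span}_{\mathbb C}\,\mathcal{R}_\nu$ automatically annihilates its ultraweak closure $\mathcal{T}_\nu$ (true because $\tr(\tau\,\cdot\,)$ is ultraweakly continuous by definition of the predual), and conversely that the separating functional produced by Hahn--Banach is represented by a trace-class operator (true because $\mathcal{B}(\mathcal H)=\mathcal{T}(\mathcal H)^*$ isometrically, so weak-$*$ continuous functionals on $\mathcal{B}(\mathcal H)$ are exactly the elements of $\mathcal{T}(\mathcal H)$). Once these two identifications are in place, the equivalence is a direct application of weak-$*$ Hahn--Banach separation together with the nondegeneracy of the trace pairing.
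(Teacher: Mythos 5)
Your proposal is correct and follows essentially the same route as the paper: both reduce informational completeness to the statement that $\mathcal{T}_\nu$ separates $\mathcal{T}(\mathcal H)=\mathcal{B}(\mathcal H)_*$ and then invoke the weak-$*$ Hahn--Banach separation theorem together with the identification $\mathcal{B}(\mathcal H)=\mathcal{T}(\mathcal H)^*$. Your write-up merely makes explicit the passage from a separating trace-class operator $\tau$ to a pair of distinct states (note only the self-adjoint part of $\tau$ can be a multiple of $\rho_1-\rho_2$, and tracelessness comes for free from $\nu(X)=I_{\mathcal H}$), a step the paper compresses into one line.
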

\begin{proof}
 A quantum probability measure $\nu$ is informationally complete if and only if $\{\nu(E) : E\in \mathcal O(X)\}$ separates the state space. This is the same as $\{\nu(E) : E\in \mathcal O(X)\}$ separating the trace-class operators $\mathcal{T}(\mathcal H) = \mathcal{B}(\mathcal H)_*$, which in turn is equivalent to $\mathcal T_\nu = \overline{{\rm span}}\{\nu(E) : E\in\mathcal O(X)\}$ separating $\mathcal{B}(\mathcal H)_*.$ By a standard corollary to the Hahn-Banach separation theorem there are no strict subspaces of the dual, here $\mathcal{B}(\mathcal H)$, that separate a Banach space, here $\mathcal{B}(\mathcal H)_*$. The result follows. 
\end{proof}

The following definition can be found in \cite{FFP}. 

 \begin{definition} A \emph{measurement basis} for a quantum probability measure $\nu$ is a finite or countably infinite set $\mathcal B_\nu$ of positive operators 
such that
\begin{enumerate}
\item[{\rm (i)}] $\mathcal B_\nu=\{\nu(E)\,:\,E\in\mathcal F_\nu\}$ for some finite or countable family $\mathcal F_\nu\subset\mathcal{O}(X)$ of pairwise disjoint sets, 
\item[{\rm (ii)}] for every $Z\in \mathcal{T}_\nu$ there exists a unique sequence $\{\alpha_{A,Z}\}_{A\in \mathcal{B}_\nu}$ of complex numbers such that $Z=\sum_{A\in \mathcal B_\nu }\alpha_{A,Z}A$ in the weak*-topology,
\item[{\rm (iii)}]  for every $A\in  \mathcal B_\nu$, the coefficient functional $\varphi_A(Z)=\alpha_{A,Z}$, $Z\in \mathcal{T}_\nu$, is a normal positive linear functional.
\end{enumerate}
If $E_0=X\setminus\left(\bigcup_{E\in\mathcal F_\nu}E\right)$, then the operator $A_0=\nu(E_0)$
is called the \emph{basis residual} for $\mathcal B_\nu$;  if $A_0=0$, then  $\mathcal B_\nu$ is said to admit a \emph{trivial basis residual}.
\end{definition}
Note that a measurement basis is a particular construction that does not necessarily exist for a given  quantum probability measure.

%Note that $1=A_0+\sum_{A\in \mathcal{B}_\nu } A$, if $\mathcal B_\nu$ is measurement basis for $\nu$.

\begin{proposition}\cite{FFP}\label{structure} If $\{A_1, A_2, \dots \}$ is a measurement basis for a quantum probability measure $\nu$, 
then there exist finite positive measures
$\mu_j:\mathcal{O}(X)\rightarrow \mathcal{B}(\mathcal{H})$, $j\geq 1$, such that each  $\mu_j\leac\nu$ 
 and
\[
\nu(E)\,=\,\sum_{j\geq 1} \mu_j(E)A_j\,,\;\mbox{for all }E\in\mathcal{O}(X)\,,
\]
where convergence of the sum is with respect to the ultraweak topology.
\end{proposition}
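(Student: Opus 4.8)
The plan is to construct the measures $\mu_j$ directly from the data of the measurement basis, namely its coefficient functionals. For each $j\geq 1$ I would set $\mu_j(E):=\varphi_{A_j}(\nu(E))$ for $E\in\mathcal{O}(X)$, where $\varphi_{A_j}$ is the normal positive linear functional on $\mathcal{T}_\nu$ furnished by part (iii) of the definition of a measurement basis; this is well-defined since $\nu(E)\in\mathcal{R}_\nu\subseteq\mathcal{T}_\nu$. (If one insists on $\mathcal{B}(\mathcal H)$-valued measures as written in the statement, simply replace $\mu_j(E)$ by $\varphi_{A_j}(\nu(E))\,I_{\mathcal H}$; nothing of substance changes.)

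The first task is to check that each $\mu_j$ is a finite positive measure. Positivity of the scalar $\mu_j(E)$ is immediate from positivity of the operator $\nu(E)$ together with positivity of the functional $\varphi_{A_j}$, and finiteness follows from $0\leq \mu_j(E)\leq \mu_j(X)=\varphi_{A_j}(\nu(X))=\varphi_{A_j}(I_{\mathcal H})<\infty$. For countable additivity, given pairwise disjoint $\{E_k\}\subseteq\mathcal{O}(X)$, weak countable additivity of $\nu$ gives $\nu(\bigcup_k E_k)=\sum_k \nu(E_k)$ in the ultraweak topology; the partial sums $\sum_{k\leq N}\nu(E_k)=\nu(\bigcup_{k\leq N}E_k)$ are norm-bounded by $\|\nu(X)\|$, and since $\varphi_{A_j}$ is normal it is ultraweakly continuous on bounded sets, so applying it term by term yields $\mu_j(\bigcup_k E_k)=\sum_k \mu_j(E_k)$. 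Absolute continuity $\mu_j\leac\nu$ is then trivial, since $\nu(E)=0$ forces $\mu_j(E)=\varphi_{A_j}(0)=0$.

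It remains to establish the expansion $\nu(E)=\sum_j \mu_j(E)A_j$, and this is essentially a rephrasing of the defining properties of the measurement basis. Fixing $E$ and applying part (ii) to $Z=\nu(E)\in\mathcal{T}_\nu$ produces a unique scalar sequence $\{\alpha_{A_j,\nu(E)}\}_j$ with $\nu(E)=\sum_j \alpha_{A_j,\nu(E)}A_j$ in the weak$^*$-topology of $\mathcal{B}(\mathcal H)$, which coincides with the ultraweak topology; by part (iii), $\alpha_{A_j,\nu(E)}=\varphi_{A_j}(\nu(E))=\mu_j(E)$, and the formula follows with the asserted mode of convergence.

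The only step that is not pure bookkeeping is the countable additivity of the $\mu_j$: there I must pass the normal functional $\varphi_{A_j}$ through the ultraweakly convergent series that witnesses countable additivity of $\nu$, which is where the normality hypothesis in the definition of a measurement basis is genuinely used. Everything else reduces immediately to the definitions.
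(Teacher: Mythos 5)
Your construction $\mu_j=\varphi_{A_j}\circ\nu$ is correct, and it is essentially the intended argument: the paper itself gives no proof (the proposition is quoted from the reference \cite{FFP}), but your verification of countable additivity via normality of the coefficient functionals applied to the increasing, norm-bounded partial sums, and of the expansion via properties (ii) and (iii), is exactly the standard route. Your positivity step also justifies the paper's remark immediately after the proposition that condition (iii) upgrades the $\mu_j$ from signed to positive measures.
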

Note that in \cite{FFP} this proposition states that the $\mu_i$ are only signed measures but in fact condition (iii) of the definition of measurement basis ensures that they are positive.

%In Proposition \ref{structure}, it is not generally true that the signed measures $\upsilon_j$ are in fact positive measures. 

The following result gives a method for creating    informationally complete quantum probability measures.

\begin{proposition}\label{prop:converseIC}
Let $\mu_j:\mathcal{O}(X)\rightarrow \mathcal{B}(\mathcal{H})$, $j\geq 1$, be mutually singular probability measures   and let $\{A_j \}$ be linearly independent, positive, span $\mathcal{B}(\mathcal H)$, and $\sum_{j\geq 1} A_{j}= I$.   Define
\[
\nu(E)\,=\,\sum_{j\geq 1}\mu_{j}(E)A_{j}\,,\;\mbox{for all }E\in\mathcal O(X)\,.
\]
Then $\nu$ is informationally complete. 
\end{proposition}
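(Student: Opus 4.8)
The plan is to verify the two equivalent conditions of Proposition~\ref{ic-span}: I will show directly that $\mathcal{T}_\nu = \mathcal{B}(\mathcal{H})$, which is the cleanest route given that the $A_j$ are already assumed to span $\mathcal{B}(\mathcal{H})$. The strategy is to prove that each $A_j$ lies in $\mathcal{T}_\nu$; once that is established, since $\mathcal{T}_\nu$ is by definition a (closed) subspace and $\{A_j\}$ spans $\mathcal{B}(\mathcal{H})$, we get $\mathcal{T}_\nu = \mathcal{B}(\mathcal{H})$ and informational completeness follows immediately.

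First I would use the mutual singularity of the $\mu_j$ to isolate each $A_j$ inside the range of $\nu$. Since the $\mu_j$ are pairwise singular probability measures, for each fixed $j$ there is a measurable set $E_j \in \mathcal{O}(X)$ with $\mu_j(E_j) = 1$ and $\mu_k(E_j) = 0$ for all $k \neq j$ --- this is the standard fact that finitely many, and by a routine exhaustion argument countably many, mutually singular finite measures can be simultaneously concentrated on disjoint carriers. (Concretely: pick disjoint carriers $C_j$ with $\mu_j$ supported on $C_j$; then $E_j = C_j$ works after discarding a $\mu_j$-null set.) Then the defining formula gives
\[
\nu(E_j) = \sum_{k \geq 1} \mu_k(E_j) A_k = A_j,
\]
where the sum collapses because every term with $k \neq j$ vanishes. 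Hence $A_j = \nu(E_j) \in \mathcal{R}_\nu \subseteq \mathcal{T}_\nu$ for every $j$.

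Since $\{A_j\}_{j \geq 1}$ spans $\mathcal{B}(\mathcal{H})$ and $\mathcal{T}_\nu$ is a subspace containing every $A_j$, we conclude $\mathcal{B}(\mathcal{H}) = \operatorname{Span}_{\mathbb{C}}\{A_j\} \subseteq \mathcal{T}_\nu \subseteq \mathcal{B}(\mathcal{H})$, so $\mathcal{T}_\nu = \mathcal{B}(\mathcal{H})$. By Proposition~\ref{ic-span}, $\nu$ is informationally complete.

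The main obstacle --- really the only point requiring care --- is the simultaneous separation of countably many mutually singular measures: pairwise disjoint carriers need to be extracted so that $\mu_j(E_j) = 1$ while $\mu_k(E_j) = 0$ for $k \ne j$. For two measures this is immediate from the definition of singularity; for countably many one argues inductively, at stage $j$ intersecting the carrier of $\mu_j$ with the complements of all previously chosen carriers and checking this does not disturb $\mu_j(E_j) = 1$ (it does not, since $\mu_j$ of each earlier carrier is $0$). Once this combinatorial point is settled, the rest is a one-line computation plus an appeal to Proposition~\ref{ic-span}. Note that linear independence of the $A_j$ and the normalization $\sum_j A_j = I$ are not actually needed for informational completeness in this argument --- they are natural hypotheses ensuring $\nu$ is a genuine quantum probability measure with a measurement-basis structure, tying this proposition back to Proposition~\ref{structure}.
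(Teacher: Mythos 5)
Your proposal is correct and takes essentially the same route as the paper: the paper's own proof is a one-line appeal to Proposition~\ref{ic-span} via the spanning hypothesis on $\{A_j\}$. You additionally supply the detail the paper leaves implicit --- using mutual singularity to find sets $E_j$ with $\nu(E_j)=A_j$, so that each $A_j$ lies in $\mathcal{T}_\nu$ --- which is exactly the right justification.
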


\begin{proof}
By the hypothesis that  $A_1, A_2, \dots$ span $\mathcal B(\mathcal H)$, $\nu$ will automatically be informationally complete by Proposition \ref{ic-span}.  

\end{proof}

Proposition \ref{prop:converseIC} allows us to create examples of informationally complete atomic quantum probability measures by choosing $\mu_{j}$ to be Dirac point masses where each point is an isolated point, or non-atomic quantum probability measures by choosing $\mu_{j}$ non-atomic.
Thus we have examples of atomic and non-atomic informationally complete quantum probability measures, and in light of Proposition \ref{ic-span}, we have the following corollary. 

\begin{corollary}
The property of atomic/non-atomic does not show up in the measurement space. That is, an operator system $\mathcal T$ could be a measurement space for both an atomic quantum probability measure and a non-atomic quantum probability measure. 
\end{corollary}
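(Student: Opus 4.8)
The plan is to exhibit a single operator system $\mathcal T \subseteq \mathcal B(\mathcal H)$ that serves simultaneously as the measurement space of an atomic quantum probability measure and of a non-atomic one; by Proposition \ref{ic-span}, the simplest such $\mathcal T$ is $\mathcal B(\mathcal H)$ itself, so it suffices to produce an atomic informationally complete quantum probability measure and a non-atomic informationally complete quantum probability measure. Both are obtained from Proposition \ref{prop:converseIC}: fix a fixed sequence $\{A_j\}_{j\geq 1}$ of linearly independent positive operators that span $\mathcal B(\mathcal H)$ with $\sum_{j\geq 1}A_j = I_{\mathcal H}$ (such a sequence exists since $\mathcal B(\mathcal H)$, $\mathcal H$ separable, is spanned by a countable family of positive operators, and one can normalize), and then choose the mutually singular probability measures $\mu_j$ in two different ways.

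First I would construct the atomic example: take $X$ to contain a sequence of isolated points $x_1, x_2, \dots$ and set $\mu_j = \delta_{x_j}$, the Dirac mass at $x_j$; these are mutually singular, each is atomic (indeed $\{x_j\}$ is an atom), and Proposition \ref{prop:converseIC} yields an informationally complete quantum probability measure $\nu_{\rm at}$ with $\mathcal T_{\nu_{\rm at}} = \mathcal B(\mathcal H)$. One checks directly that $\nu_{\rm at}$ is atomic: any $E$ with $\nu_{\rm at}(E)\neq 0$ must contain some $x_j$ (since $\nu_{\rm at}(E) = \sum_j \mu_j(E)A_j$ and the $A_j$ are linearly independent), and $\{x_j\}\subseteq E$ is an atom for $\nu_{\rm at}$. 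Second, for the non-atomic example I would take $X = [0,1]$ (or a disjoint union of copies), partition it into countably many disjoint Borel sets $Y_j$ of positive Lebesgue measure, and let $\mu_j$ be normalized Lebesgue measure restricted to $Y_j$; these are mutually singular, each non-atomic, and Proposition \ref{prop:converseIC} again gives an informationally complete (hence $\mathcal T_{\nu_{\rm na}} = \mathcal B(\mathcal H)$) quantum probability measure $\nu_{\rm na}$, which is non-atomic because any subset of positive measure can be split by Lebesgue's non-atomicity.

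Then the corollary follows: $\mathcal T = \mathcal B(\mathcal H)$ is the measurement space of both $\nu_{\rm at}$ and $\nu_{\rm na}$, so knowledge of the measurement space alone cannot determine whether the underlying quantum probability measure is atomic or non-atomic. The only point requiring a little care — and the main (mild) obstacle — is verifying that the $\mu_j$ produced really are mutually singular and that $\nu_{\rm na}$ has no atoms at all rather than merely being "not atomic"; both reduce to the elementary facts that Dirac masses at distinct points are singular and that Lebesgue measure on an interval is non-atomic, and that linear independence of the $\{A_j\}$ forces $\nu(E)=0$ exactly when every $\mu_j(E)=0$. Everything else is an immediate invocation of Proposition \ref{prop:converseIC} and Proposition \ref{ic-span}.
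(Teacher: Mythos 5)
Your proposal is correct and follows essentially the same route as the paper: the authors likewise invoke Proposition \ref{prop:converseIC} with Dirac masses at isolated points for the atomic example and with non-atomic mutually singular measures for the non-atomic one, then conclude via Proposition \ref{ic-span} that both have measurement space $\mathcal{B}(\mathcal{H})$. Your added verifications (mutual singularity of the $\mu_j$ and the direct check that $\nu_{\rm at}$ is atomic and $\nu_{\rm na}$ has no atoms) are correct and merely make explicit what the paper leaves implicit.
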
 

The following definitions regarding the clean order come from quantum information theory literature, where a \emph{quantum channel} $\Phi:\mathcal T(\mathcal H)\rightarrow\mathcal T(\mathcal K)$ is a completely positive, trace preserving, linear map between the trace-class operators acting on   Hilbert spaces $\mathcal{H}$ and $\mathcal K$, respectively. The dual map $\Phi^*: \mathcal T(\mathcal K)^*\rightarrow\mathcal T(\mathcal H)^*$ (i.e.\  $\Phi^*: \mathcal B(\mathcal K)\rightarrow\mathcal B(\mathcal H)$) is then a completely positive, unital linear map. 
\begin{definition}\label{clean order} {\rm (\cite{pellonpaa2011})}
Let $\nu_i:\mathcal{O}(X)\rightarrow \mathcal{B}(\mathcal{H}_i)$, $i=1,2$, be quantum probability measures.
\begin{enumerate}
\item $\nu_1$ is \emph{cleaner than} $\nu_2$, denoted by
$\nu_2 \ll_{\rm cl} \nu_1$, if $\nu_2=\Phi^*\circ\nu_1$ for some quantum
channel $\Phi:\mathcal T(\mathcal H_2)\rightarrow\mathcal T(\mathcal H_1)$.
\item $\nu_1$ and $\nu_2$ are \emph{cleanly equivalent}, denoted by $\nu_2 \ce \nu_1$, if
$\nu_1 \ll_{\rm cl} \nu_2$ and $\nu_2 \ll_{\rm cl} \nu_1$.
\item $\nu_1$ is \emph{clean} if $\nu_2\ll_{\rm cl}\nu_1$ for every quantum probability measure $\nu_2$ satisfying $\nu_1\ll_{\rm cl}\nu_2$.
\end{enumerate}
\end{definition}

\begin{proposition}\label{prop:clean-na}Let $\nu_i:\mathcal{O}(X)\rightarrow \mathcal{B}(\mathcal{H}_i)$, $i=1,2$, be cleanly equivalent quantum probability measures. If $\nu_1$ is non-atomic, then $\nu_2$ is non-atomic.
\end{proposition}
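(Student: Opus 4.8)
The plan is to leverage the characterization $\nu_2 = \Phi^* \circ \nu_1$ coming from cleanly equivalent, together with the fact that a quantum channel's dual is completely positive and unital, to transfer the non-atomic property from $\nu_1$ to $\nu_2$. Since $\nu_1 \ce \nu_2$ means both $\nu_1 \ll_{\rm cl} \nu_2$ and $\nu_2 \ll_{\rm cl} \nu_1$, there are quantum channels $\Phi : \mathcal{T}(\mathcal H_2) \rightarrow \mathcal{T}(\mathcal H_1)$ and $\Psi : \mathcal{T}(\mathcal H_1) \rightarrow \mathcal{T}(\mathcal H_2)$ with $\nu_2 = \Phi^* \circ \nu_1$ and $\nu_1 = \Psi^* \circ \nu_2$. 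The key observation is that $\Phi^*$ is a positive linear map, so if $\nu_1(E) = 0$ then $\nu_2(E) = \Phi^*(\nu_1(E)) = \Phi^*(0) = 0$; that is, $\nu_2 \ll_{\rm ac} \nu_1$. Symmetrically, $\nu_1 \ll_{\rm ac} \nu_2$. So cleanly equivalent quantum probability measures are mutually absolutely continuous.

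Once we have $\nu_2 \ll_{\rm ac} \nu_1$ and $\nu_1 \ll_{\rm ac} \nu_2$, the statement follows immediately from Proposition \ref{prop:abscontna}. Indeed, suppose for contradiction that $\nu_2$ is not non-atomic, i.e.\ $\nu_2$ has an atom $A$. We want to produce an atom for $\nu_1$, contradicting that $\nu_1$ is non-atomic. The cleanest route is to apply Proposition \ref{prop:abscontna} in the direction $\nu_1 \ll_{\rm ac} \nu_2$ after first extracting an atomic piece: by the atomic/non-atomic decomposition (Theorem \ref{thm:a+na}), since $\nu_2$ has an atom, its atomic part $(\nu_2)_a$ is nonzero and atomic, and restricting everything to the support $E_a$ of $(\nu_2)_a$ gives $\nu_1|_{E_a} \ll_{\rm ac} \nu_2|_{E_a} = (\nu_2)_a|_{E_a}$, which is nonzero (by mutual absolute continuity, $\nu_1(E_a) \neq 0$ since $\nu_2(E_a) \neq 0$) and atomic; Proposition \ref{prop:abscontna} then forces $\nu_1|_{E_a}$ to be atomic, hence to have an atom, hence $\nu_1$ has an atom — contradiction.

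Actually the argument is even more direct: by Proposition \ref{prop:abscontna}, since $\nu_1 \ll_{\rm ac} \nu_2$ and $\nu_1$ is non-atomic, we could instead phrase it as: $\nu_1$ non-atomic and $\nu_1 \ll_{\rm ac} \nu_2$ gives us no information in that direction, so we really do need the direction $\nu_2 \ll_{\rm ac} \nu_1$. Applying Proposition \ref{prop:abscontna} with the roles $\nu_1$ (there) $= \nu_2$ (here) and $\nu_2$ (there) $= \nu_1$ (here): if the hypothesis "$\nu_2 \ll_{\rm ac} \nu_1$ and $\nu_1$ non-atomic" is available, the proposition's first sentence ("If $\nu_1 \ll_{\rm ac} \nu_2$ and $\nu_1$ is non-atomic then $\nu_2$ is non-atomic") does not directly apply with those labels. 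The correct instantiation: we have $\nu_2 \ll_{\rm ac} \nu_1$, and we want to conclude $\nu_2$ is non-atomic from $\nu_1$ non-atomic. Reading Proposition \ref{prop:abscontna} with its $\nu_1 \mapsto \nu_2$ and its $\nu_2 \mapsto \nu_1$ does not match. So the contrapositive route through the atomic decomposition sketched in the previous paragraph is the one to use, or equivalently: if $\nu_2$ had an atom, then $(\nu_2)_a \neq 0$ is atomic and $(\nu_2)_a \ll_{\rm ac} \nu_2 \ll_{\rm ac} \nu_1$, so by the last sentence of Proposition \ref{prop:abscontna} ("if $\nu_1$ is nonzero and atomic, then $\nu_2$ has an atom," now read with its $\nu_1 \mapsto (\nu_2)_a$ and its $\nu_2 \mapsto \nu_1$) we get that $\nu_1$ has an atom, contradicting that $\nu_1$ is non-atomic.

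The main obstacle, such as it is, is bookkeeping rather than mathematics: one must be careful about which absolute-continuity direction Proposition \ref{prop:abscontna} consumes, and ensure the atomic piece extracted from $\nu_2$ remains nonzero after passing through the absolute continuity (which it does, precisely because cleanly equivalent measures are \emph{mutually} absolutely continuous, not merely one-sidedly so). Everything else — that $\Phi^*$ kills zero and is positive — is immediate from the definition of a quantum channel.
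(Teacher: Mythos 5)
Your proof is correct, but it takes a genuinely different route from the paper's. The paper argues directly from the definitions: given $E$ with $\nu_2(E)\neq 0$, it pulls back to $\nu_1(E)\neq 0$ using $\phi^*(0)=0$, invokes non-atomicity of $\nu_1$ to produce $F\subseteq E$ with $\nu_1(F)\neq 0$ and $\nu_1(F)\neq \nu_1(E)$, and then pushes both facts forward through $\psi^*$ (and the identity $\phi^*\circ\psi^*|_{\mathcal R_{\nu_2}}=\mathrm{id}$) to conclude that $F$ witnesses that $E$ is not an atom of $\nu_2$; no earlier results are used. You instead isolate the observation that $\nu_2\ll_{\rm cl}\nu_1$ already forces $\nu_2\ll_{\rm ac}\nu_1$ (note that your final contrapositive argument consumes only this one direction of the clean equivalence, so the remark that mutuality is essential is slightly overstated), and then reduce the proposition to Theorem \ref{thm:a+na} together with the last clause of Proposition \ref{prop:abscontna}. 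This buys a strictly more general statement --- non-atomicity passes to any POVM absolutely continuous with respect to a non-atomic one, channels or no channels --- at the cost of leaning on the earlier machinery. Your bookkeeping caution about Proposition \ref{prop:abscontna} is warranted: as printed, its first two sentences have $\nu_1$ and $\nu_2$ in roles opposite to Johnson's Theorem 2.4 and to the way the corollary following it applies it, and the clause you ultimately invoke, ``if $\nu_1$ is nonzero and atomic, then $\nu_2$ has an atom,'' is the correct and usable one. A small simplification: you do not need the atomic/non-atomic decomposition at all, since if $A$ is an atom of $\nu_2$ then the restriction $\nu_2(\,\cdot\,\cap A)$ is already nonzero, atomic, and absolutely continuous with respect to $\nu_1$, so the same clause applies directly.
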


\begin{proof}
Suppose $\nu_1,\nu_2$ are cleanly equivalent. Then $\nu_2=\phi^*\circ\nu_1$ and  $\nu_1=\psi^*\circ\nu_2$ for some quantum
channels  $\phi:\mathcal{T}(\mathcal{H}_2)\rightarrow \mathcal{T}(\mathcal{H}_1)$ and  $\psi:\mathcal{T}(\mathcal{H}_1)\rightarrow \mathcal{T}(\mathcal{H}_2)$. Let $E\in\mathcal O(X)$ and assume $\nu_2(E)\neq0$. We wish to show that $\nu_2(F)\neq 0$ and $\nu_2(F)\neq \nu_2(E)$ for some subset $F \subsetneq E$. Note that $\nu_2(E)\neq0$ implies $\phi^*\circ\nu_1(E)\neq0$, yielding $\nu_1(E)\neq 0$ by linearity of $\phi^*$. Since $\nu_1$ is non-atomic, we have $\nu_1(F)\neq 0$ for some $F\subset E$, implying that $\nu_1(F)=\psi^*\circ\nu_2(F)\neq 0$, yielding $\nu_2(F)\neq 0$ by linearity of $\psi^*$. 

\medskip

It remains to show that $\nu_2(F)\neq \nu_2(E)$. To this end, we consider $\nu_1(F)\neq \nu_1(E)$ (since $\nu_1$ is non-atomic); this is equivalent to $\psi^*\circ\nu_2(F)\neq \psi^*\circ\nu_2(E)$. Applying the channel $\phi^*$ to both sides of the inequality, and noting that $\phi^*\circ\psi^*|_{\mathcal R_{\nu_2}}= I_{\mathcal{H}}$, we obtain  $\nu_2(F)\neq \nu_2(E)$ as desired.  
\end{proof}

\section*{Acknowledgements}
 S.P.\ was supported by NSERC Discovery Grant number 1174582, the Canada Foundation for Innovation, and the Canada Research Chairs Program. S.P.\ thanks Doug Farenick for helpful discussions at the initial stage of this work.

%%%%%%%%%%%%%%%%%%%%%%%%%%% bibliography %%%%%%%%%%%%%%%%%%%%%%%%%

%\bibliographystyle{amsplain2}
%\begin{thebibliography}{100}
\begin{bibdiv}
\begin{biblist}
\bibitem{clean2005}
F.~Buscemi, M.~Keyl, G.~M.\ D'Ariano, P.~Perinotti, and R.~F.\ Werner,
  \emph{Clean positive operator valued measures}, J. Math. Phys. \textbf{46}
  (2005), no.~8, 082109. 
  
\bibitem{Busch--Lahti--Mittelstaedt-book}
P.~Busch, P.J.~Lahti, and P.~Mittelstaedt, \emph{The quantum theory of measurement},
 Lecture Notes in Physics m2, Springer-Verlag, Berlin, 1991.
 
 \bibitem{chiribella--etal2007}
 G.~Chiribella, G.M.~D'Ariano, and D.~Schlingemann,
 \emph{How continuous quantum measurements in finite dimensions are actually discrete},
 Phys. Rev. Lett. \textbf{98} (2007), no.~19, 190403.
 
 \bibitem{chiribella--etal2010}
 G.~Chiribella G.M.~D'Ariano, and D.~Schlingemann,
  \emph{Barycentric decompositions of quantum measurements in finite dimensions}, J. Math. Phys. \textbf{51}
  (2010), no.~2, 0221111.
  
\bibitem{dariano--etal2005}
G.~M. D'Ariano, P.~Lo~Presti, and P.~Perinotti, \emph{Classical randomness in
  quantum measurements}, J. Phys. A \textbf{38} (2005), no.~26, 5979--5991.

\bibitem{Davies-book}
E.~B.\ Davies, \emph{Quantum theory of open systems}, Academic Press [Harcourt
  Brace Jovanovich Publishers], London, 1976. 
 
\bibitem{DL} E.B.~Davies  and J.T.~Lewis, \emph{An operational approach to quantum probability},  Comm.\ Math.\ Phys.\ \textbf{17}(3) (1970), 239--260.

 \bib{DouglasLem}{article}{
   author={Douglas, R.G.},
   title={On majorization, factorization, and range inclusion of operators
   on Hilbert space},
   journal={Proc. Amer. Math. Soc.},
   volume={17},
   date={1966},
   pages={413--415},
   issn={0002-9939},
   review={\MR{0203464}},
   %doi={10.2307/2035178},
}

%\bibitem{fedja} 
%fedja (https://mathoverflow.net/users/1131/fedja), {\em Comparison of the absolute value of an operator with its positive parts}, MathOverflow https://mathoverflow.net/q/285512 (version: 2017-11-08)
  
%\bib{FarenickBook}{book}{
%  title={Fundamentals of Functional Analysis},
%  author={Farenick, D.},
%  date={2016},
%  series={Universitext},
%  publisher={Springer, Cham, Switzerland},
 % }
  
 \bibitem{FFP} D.~Farenick, R.~Floricel,  and S.~Plosker. \textit{Approximately clean quantum probability measures}. Journal of Mathematical Physics, \textbf{54}, Issue 5, 052201, 2013.

  \bibitem{farenick--kozdron2012} D.~Farenick, M.~J.~Kozdron, Conditional expectation and Bayes' rule for quantum random variables and positive operator valued measures, J.\ Math.\ Phys.\ 53 (2012) 042201.
  
   \bibitem{FPS}
D.~Farenick, S.~Plosker, and J.~Smith, \emph{Classical and
  nonclassical randomness in quantum measurements}, J. Math. Phys. \textbf{52}
  (2011), no.~12, 122204, 26.

%\bib{Fischer}{article}{
%    author={Fischer, Tom},
%    title={Existence, uniqueness, and minimality of the Jordan measure decomposition},
%    eprint={arXiv:1206.5449},
%    date={2012}
%}

\bib{Hadwin}{article}{
   author={Hadwin, D.W.},
   title={Dilations and Hahn decompositions for linear maps},
   journal={Canad. J. Math.},
   volume={33},
   date={1981},
   number={4},
   pages={826--839},
   review={\MR{634141}},
}

\bib{Halmos}{book}{
   author={Halmos, P. R.},
   title={Measure Theory},
   publisher={D. Van Nostrand Company, Inc., New York, N. Y.},
   date={1950},
   pages={xi+304},
   review={\MR{0033869}},
}

\bib{Larson et al}{article}{
   author={Han, D.},
   author={Larson, D. R.},
   author={Liu, B.},
   author={Liu, R.},
   title={Operator-valued measures, dilations, and the theory of frames},
   journal={Mem. Amer. Math. Soc.},
   volume={229},
   date={2014},
   number={1075},
   pages={viii+84},
   issn={0065-9266},
   isbn={978-0-8218-9172-8},
   review={\MR{3186831}},
}

 \bibitem{heinonen2005}
T.\ Heinonen, \emph{Optimal measurements in quantum mechanics}, 
Phys. Letters A \textbf{346} (2005), 77--86.

 \bibitem{Holevo-book2}
A.S.\ Holevo, \emph{Statistical structure of quantum theory}, Lecture Notes in
  Physics. Monographs, vol.~67, Springer-Verlag, Berlin, 2001.

  \bibitem{jencova--pulmannov2009} A.~Jencova, S.~Pulmannova, Characterizations of commutative POV measures, Found. Phys. 39 (2009), 613--624.
  
\bib{Johnson67}{article}{
  title={On the Lebesgue decomposition theorem},
  author={Johnson, R.A.},
  journal={Proceedings of the American Mathematical Society},
  volume={18},
  number={4},
  pages={628--632},
  year={1967},
  publisher={JSTOR}
}

\bib{Johnson70}{article}{
   author={Johnson, R.A.},
   title={Atomic and nonatomic measures},
   journal={Proc. Amer. Math. Soc.},
   volume={25},
   date={1970},
   pages={650--655},
   issn={0002-9939},
   review={\MR{0279266}},
   doi={10.2307/2036664},
}

 \bibitem{kahn2007}
J.~Kahn, \emph{Clean positive operator-valued measures for qubits and
  similar cases}, J. Phys. A \textbf{40} (2007), no.~18, 4817--4832.

%\bib{Strauss}{article}{
%   author={Lindenstrauss, Joram},
%   title={A short proof of Liapounoff's convexity theorem},
%   journal={J. Math. Mech.},
%   volume={15},
%   date={1966},
%   pages={971--972},
%   review={\MR{0207941}},
%}

\bib{Neumark}{article}{
   author={Neumark, M.A.},
   title={On a representation of additive operator set functions},
   journal={C. R. (Doklady) Acad. Sci. URSS (N.S.)},
   volume={41},
   date={1943},
   pages={359--361},
   review={\MR{0010789}},
}

\bib{Nielsen}{book}{
   author={Nielsen, O.A.},
   title={An introduction to integration and measure theory},
   series={Canadian Mathematical Society Series of Monographs and Advanced
   Texts},
   note={A Wiley-Interscience Publication},
   publisher={John Wiley \& Sons, Inc., New York},
   date={1997},
   pages={xvi+473},
   isbn={0-471-59518-7},
   review={\MR{1468232}},
}

\bibitem{Ozawa} M.~Ozawa,\emph{Conditional probability and a posteriori states in quantum mechanics},  Publications of the Research Institute for Mathematical Sciences, \textbf{21}(2) (1985), 279--295.

 \bibitem{parthasarathy1999}
K.R.\ Parthasarathy, \emph{Extremal decision rules in quantum hypothesis
  testing}, Infin.\ Dimens.\ Anal.\ Quantum Probab.\ Relat.\ Top. \textbf{2} (1999),
  no.~4, 557--568.

\bib{Paulsen}{book}{
   author={Paulsen, V.},
   title={Completely bounded maps and operator algebras},
   series={Cambridge Studies in Advanced Mathematics},
   volume={78},
   publisher={Cambridge University Press, Cambridge},
   date={2002},
   pages={xii+300},
   review={\MR{1976867}},
}

\bibitem{pellonpaa2011}
J.-P.\ Pellonp{\"a}{\"a}, \emph{Complete characterization of extreme
  quantum observables in infinite dimensions}, J.\ Phys.\ A \textbf{44} (2011),
  no.~8, 085304, 12.

\bib{Roth}{book}{
   author={Roth, W.},
   title={Operator-valued measures and integrals for cone-valued functions},
   series={Lecture Notes in Mathematics},
   volume={1964},
   publisher={Springer-Verlag, Berlin},
   date={2009},
   pages={x+356},
   review={\MR{2489031}},
}

%\bib{Takesaki}{book}{
% author={Takesaki, M.},
% title={Theory of Operator Algebras I},
% series={Encyclopaedia of Mathematical Sciences},
% volume={124},
% publisher={Springer-Verlag Berlin Heidelberg},
% date={2002},
% }

\bib{VOVM book}{book}{
   title={Vector and operator valued measures and applications},
   booktitle={Proceedings of a Symposium on Vector and Operator Valued
   Measures and Applications held at Snowbird Resort, Alta, Utah, August
   7--12, 1972},
   editor={Tucker, D.H.},
   editor={Maynard, H.B.},
   publisher={Academic Press, Inc. [A subsidiary of Harcourt Brace
   Jovanovich, Publishers], New York-London},
   date={1973},
   pages={xvi+458},
   review={\MR{0325918}},
}

\end{biblist}
\end{bibdiv}
\end{document}